\newtheorem{lemma}{Lemma}[section]
\newtheorem{theorem}[lemma]{Theorem}
\newtheorem{corollary}[lemma]{Corollary}
\theoremstyle{definition}
\newtheorem*{definition}{Definition}
\theoremstyle{remark}
\newtheorem*{remark}{Remark}
\renewcommand{\epsilon}{\varepsilon}
\title[Non-landing rays]{Non-landing parameter rays of the Multicorns}
\author[H.~Inou]{Hiroyuki Inou}
\address{Department of Mathematics, Kyoto University, Kyoto 606-8502, Japan}
\email{inou@math.kyoto-u.ac.jp}
\thanks{}
\author[S.~Mukherjee]{Sabyasachi Mukherjee}
\address{Jacobs University Bremen, Campus Ring 1, Bremen 28759, Germany}
\email{s.mukherjee@jacobs-university.de}
\subjclass[2010]{37F45, 37F10, 37F20, 30D05}
\date{\today}
\begin{document}

\begin{abstract}
It is well known that every rational parameter ray of the Mandelbrot set lands at a single parameter. We study the rational parameter rays of the multicorn $\mathcal{M}_d^*$, the connectedness locus of unicritical antiholomorphic polynomials of degree $d$, and give a complete description of their accumulation properties. One of the principal results is that the parameter rays accumulating on the boundaries of odd period (except period $1$) hyperbolic components of the multicorns do not land, but accumulate on arcs of positive length consisting of parabolic parameters.

We also show the existence of undecorated real-analytic arcs on the boundaries of the multicorns, which implies that the centers of hyperbolic components do not accumulate on the entire boundary of $\mathcal{M}_d^*$, and the Misiurewicz parameters are not dense on the boundary of $\mathcal{M}_d^*$.
\end{abstract}

\maketitle

\tableofcontents

\section{Introduction}
We consider unicritical antiholomorphic polynomials $f_c(z) = \overline{z}^d + c$ for any degree $d \geq 2$, and $c \in \mathbb{C}$. In analogy to the holomorphic case, the set of all points which remain bounded under all iterations of $f_c$ is called the \emph{filled-in Julia set} $K(f_c)$. The boundary of the filled-in Julia set is defined to be the \emph{Julia set} $J(f_c)$, and the complement of the Julia set is defined to be its \emph{Fatou set} $F(f_c)$. This leads, as in the holomorphic case, to the notion of \emph{connectedness locus} of degree $d$ unicritical antiholomorphic polynomials:

\begin{definition}
The \emph{multicorn} of degree $d$ is defined as $\mathcal{M}^{\ast}_d = \{ c \in \mathbb{C} : K(f_c)$ is connected$\}.$
\end{definition} 

The dynamics of quadratic antiholomorphic polynomials, and its connectedness locus, $\mathcal{M}^*_2$ (also known as the tricorn), was first studied in \cite{CHRS}, and their numerical experiments showed differences between the Mandelbrot set and the tricorn in that there are bifurcations from the period $1$ hyperbolic component to period $2$ hyperbolic components along arcs in the latter. However, it was Milnor who first observed the importance of the multicorns; he found little tricorn and multicorn-like sets as prototypical objects in the parameter space of real cubic polynomials \cite{M3} and in the real slices of rational maps with two critical points \cite{M4}. Nakane \cite{Na1} proved that the tricorn is connected, in analogy to Douady and Hubbard's classical proof on the Mandelbrot set. This generalizes naturally to multicorns of any degree. Later, Nakane and Schleicher, in \cite{NS}, studied the structure of hyperbolic components  of $\mathcal{M}_d^*$ via the multiplier map (even period case), and the critical value map (odd period case). These maps are branched coverings over the unit disk of degree $d-1$ and $d+1$ respectively, branched only over the origin. Hubbard and Schleicher \cite{HS} proved that the multicorns are not pathwise connected, confirming a conjecture of Milnor. Recently, in an attempt to explore the topological aspects of the parameter spaces of unicritical antiholomorphic polynomials, the combinatorics of external dynamical rays of such maps were studied in \cite{Sa} in terms of orbit portraits, and this was used in \cite{MNS} where the bifurcation phenomena, boundaries of odd period hyperbolic components, and the combinatorics of parameter rays were described. In \cite{I1,IM}, we study the `universality' property of the multicorns, and give a precise explanation for the existence of `baby multicorns' in the multicorns, and in the parameter space of real cubic polynomials. The main results of \cite{IM} show that the straightening map from a `baby multicorn', either in multicorns of even degree or in the real cubic locus, to the original multicorn is discontinuous. A computer-assisted proof of this phenomenon for a particular period $3$ baby tricorn has been given in \cite{I1}. These are the first known examples where the straightening map fails to be continuous on a real two-dimensional slice of a holomorphic family of holomorphic polynomials. The proof of discontinuity of the straightening map in \cite{IM} is carried out by showing that all non-real \emph{umbilical cords} of the multicorns wiggle, which generalizes a theorem of \cite{HS}, and settles a conjecture made by various people including Hubbard, Inou, Milnor and Schleicher. 

The combinatorics and topology of the multicorns differ in many ways from those of their holomorphic counterparts, the multibrot sets, which are the connectedness loci of degree $d$ unicritical polynomials. At the level of combinatorics, this is manifested in the structure of orbit portraits \cite[Theorem 2.6, Theorem 3.1]{Sa}. The topological features of the multicorns have quite a few properties in common with the connectedness locus of real cubic polynomials, e.g.\ discontinuity of landing points of dynamical rays, bifurcation along arcs, existence of real-analytic curves containing q.c.-conjugate parabolic parameters, lack of local connectedness of the connectedness loci etc. \cite{lavaurs_systemes_1989}, \cite{KN}, \cite[Corollary 3.7]{HS}, \cite[Theorem 3.2, Theorem 6.2]{MNS}. These are in stark contrast with the multibrot sets. 

\medskip

\begin{figure}[ht!]
\includegraphics[scale=0.34]{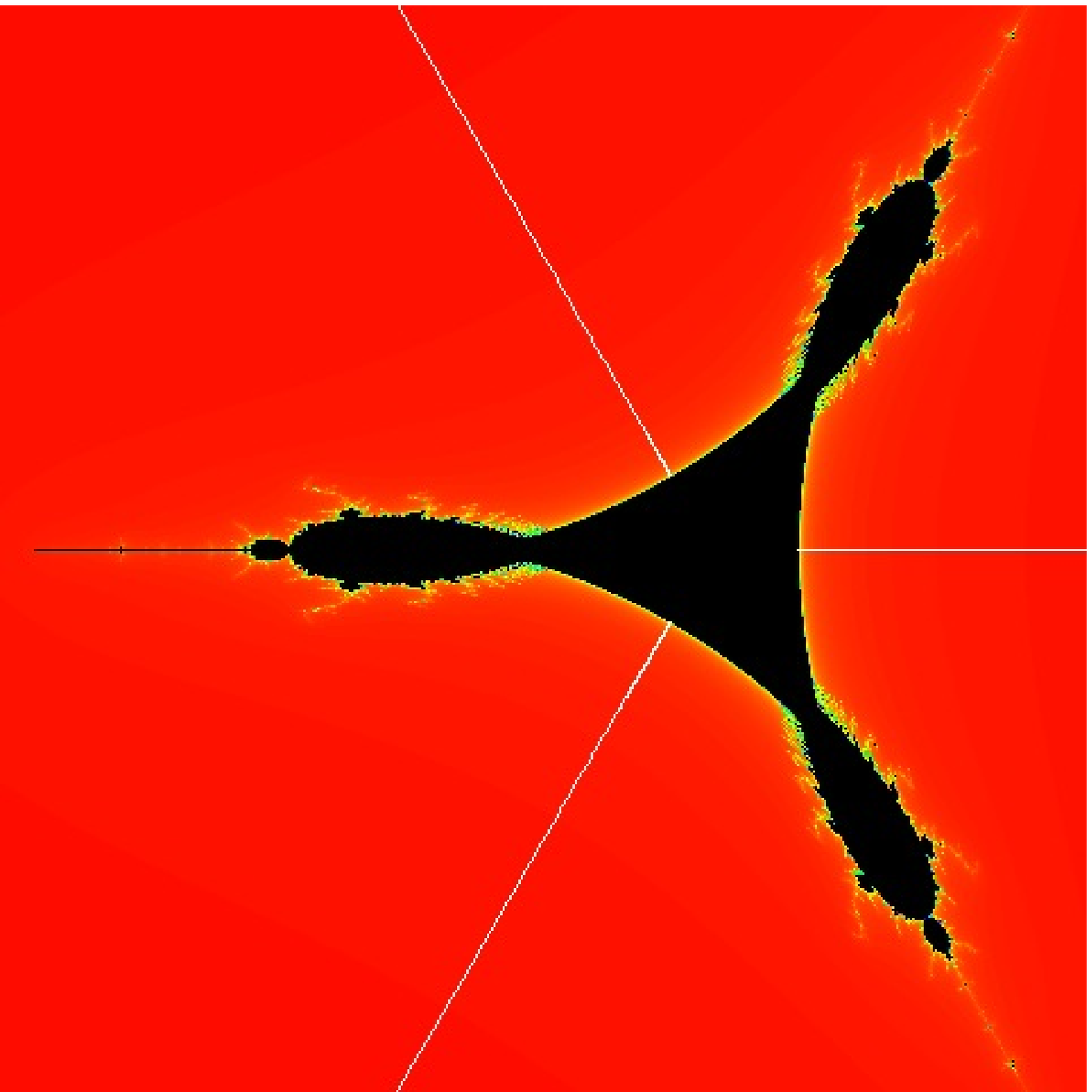} \hspace{4mm} \includegraphics[scale=0.34]{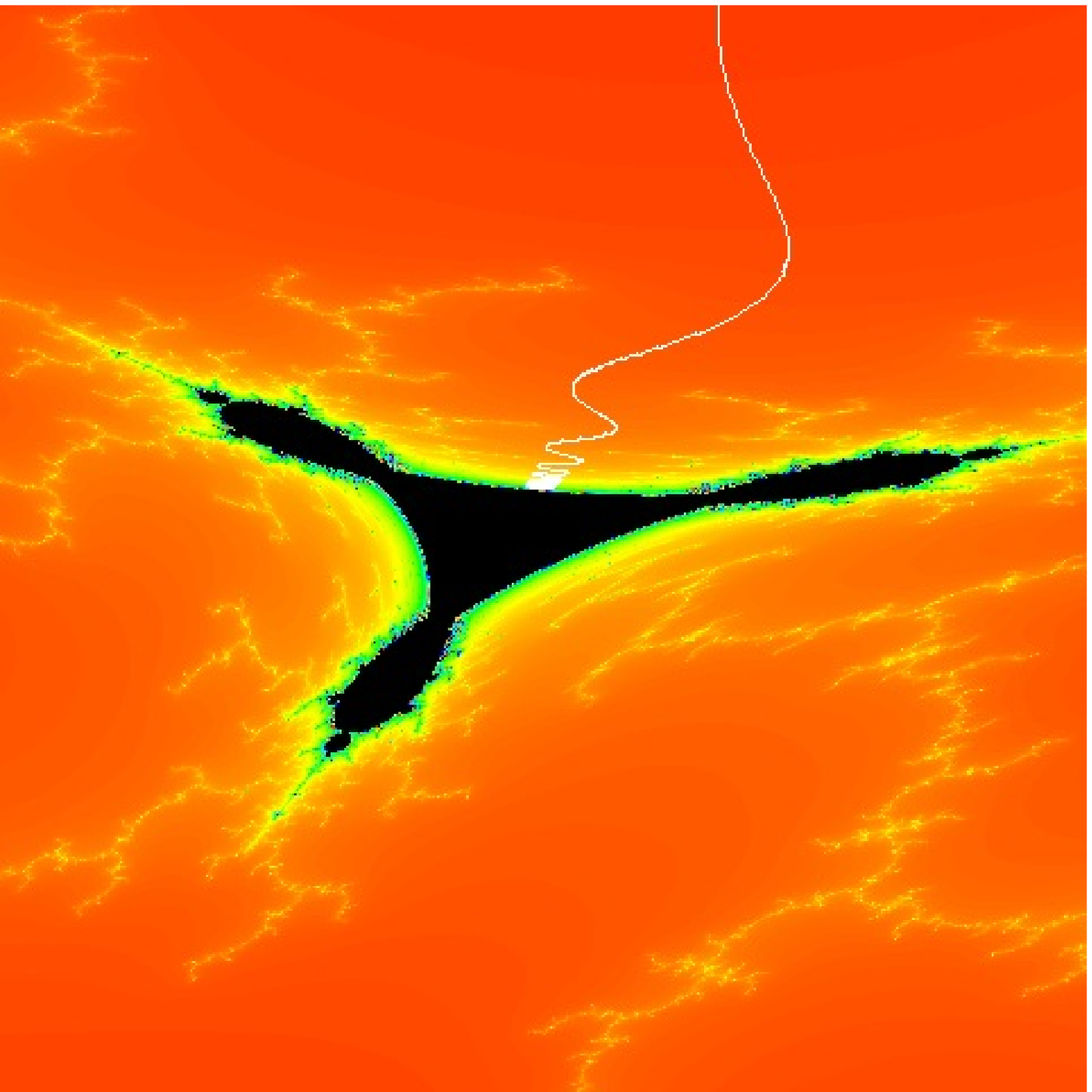}
\label{landwiggle}
\caption{Left: Landing of the parameter rays at fixed angles on parabolic arcs containing undecorated sub-arcs. Right: Non-trivial accumulation of a parameter ray at an odd-periodic angle.}
\end{figure}

One of the main purposes of this paper is to give a complete description of the landing/accumulation properties of the rational parameter rays of $\mathcal{M}^{\ast}_d$ (see Theorem \ref{complete picture of rays}). The landing of rational parameter rays of the Mandelbrot set is related to the fact that the parabolic parameters with given combinatorics are isolated in the Mandelbrot set \cite[Theorem C.7]{GM1}, \cite[Proposition 3.1]{S1a}. This remains true for even period parabolics of the multicorns. Hence, if the accumulation set of a parameter ray $\mathcal{R}_t^d$ of $\mathcal{M}^{\ast}_d$ contains a parameter $c$ having an even-periodic parabolic cycle, then it lands at a single point. This statement was proved in \cite[Lemma 7.2]{MNS}. But the odd periodic parabolic parameters of the multicorns are far from being isolated, there are real-analytic arcs of combinatorially equivalent parabolic parameters of any given odd period. This, at least heuristically, tells that there is no good reason for a parameter ray to land at a single point of a parabolic arc, unless it does so for some symmetry reasons. The following theorem confirms this heuristics (compare Figure \ref{landwiggle}).

\begin{theorem}[Non-Landing Parameter Rays]\label{most rays wiggle}
The accumulation set of every parameter ray accumulating on the boundary of a hyperbolic component of odd period (except period one) of $\mathcal{M}_d^{\ast}$ contains an arc of positive length.
\end{theorem}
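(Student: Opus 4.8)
The plan is to localize the accumulation set on a single parabolic arc and then to show, via antiholomorphic parabolic implosion, that it sweeps out a nondegenerate sub-arc. First I would record that the angle $t$ of the ray is periodic under the combinatorial map $\vartheta \mapsto -d\,\vartheta \pmod 1$, so that $\mathcal{R}_t^d$ can only accumulate on parameters whose dynamics carry the corresponding periodic orbit portrait. Since the ray accumulates on the boundary of an odd-period hyperbolic component $W$, and since (by the classification of boundary points of odd-period components) $\partial W$ consists of parabolic arcs together with their cusp endpoints, the accumulation set $A$ of $\mathcal{R}_t^d$ is a nonempty, compact, connected subset of $\partial W$ lying in the closure of one parabolic arc $\mathcal{A}$. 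Combinatorial rigidity along $\mathcal{A}$ guarantees that the orbit portrait, and in particular the set of angles whose dynamical rays land at the characteristic parabolic point, is constant along $\mathcal{A}$.

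Next I would parametrize $\mathcal{A}$ by the critical Ecalle height $h \in \mathbb{R}$, giving a real-analytic family $c(h)$ of combinatorially equivalent parabolic parameters of odd period $k$. The goal then becomes to exhibit a nondegenerate interval $I \subset \mathbb{R}$ with $c(h) \in A$ for every $h \in I$. For this I would analyze the approach of the ray to $\mathcal{A}$ through the perturbed Fatou-coordinate picture. Along a parameter sequence on $\mathcal{R}_t^d$ tending to $\mathcal{A}$, the critical orbit passes through the narrow channel opening at the imploding parabolic point; the incoming and outgoing Fatou coordinates identify this passage with a translation in an Ecalle cylinder, and the height at which the critical value crosses pins down which boundary parameter $c(h)$ the perturbed map resembles. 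The essential antiholomorphic feature is that the first-return map $f_c^{\,k}$ is antiholomorphic, so that $f_c^{\,2k}$ is the relevant holomorphic parabolic germ and the transition across the cylinder carries a complex conjugation. This conjugation makes the lifted phase of the critical value rotate as the parameter advances along the ray, rather than stabilize.

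The \emph{main obstacle}, and the heart of the proof, is to make this rotation quantitative and to show it does not damp out: one must develop the asymptotics of the perturbed (anti)holomorphic Fatou coordinates and of the associated horn map uniformly as the parameter tends to $\mathcal{A}$ along $\mathcal{R}_t^d$, and then verify that the critical Ecalle height of the perturbed system oscillates through a whole interval $I$ of values. This is precisely the mechanism behind the wiggling of umbilical cords established for antiholomorphic maps, and I would adapt that analysis to the present setting. Granting the uniform control, for each $h \in I$ one produces points of $\mathcal{R}_t^d$ arbitrarily close to $c(h)$, whence $c(h) \in A$; combined with the connectedness of $A$, this shows that $A$ contains the arc $\{c(h) : h \in I\}$, which has positive length.

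Finally, I would isolate why period one must be excluded. The period-one parabolic arcs carry an additional symmetry that forces the oscillation described above to collapse onto a single symmetric parameter, so that the corresponding rays land rather than wiggle. For all higher odd periods this extra symmetry is absent along a generic sub-arc, the oscillation is genuine, and the accumulation set is therefore an arc of positive length, as claimed.
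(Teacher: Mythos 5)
Your framework for the transfer step (perturbed Fatou coordinates, transit map, phase going to infinity, then connectedness of the accumulation set) matches the paper's, but the heart of the theorem is missing from your proposal, and the mechanism you put in its place is not the right one. The spiraling of the phase as a parameter approaches a parabolic arc (Lemma \ref{spiral}, quoted from Hubbard--Schleicher) occurs for \emph{every} parabolic arc, including the period-one arcs where the rays at angles $j/(d+1)$ do land (Lemma \ref{fixed angles land}); so ``the antiholomorphic conjugation makes the lifted phase rotate'' cannot by itself distinguish wiggling from landing. What does distinguish them is a property of the \emph{target}: the projection of the dynamical ray $R_c(t)$ into the repelling Ecalle cylinder must traverse a nondegenerate interval of Ecalle heights, i.e.\ must fail to be a horizontal round circle. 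Only then does the spiraling critical value cross that projection infinitely often along every path approaching each parameter $c(h)$ with $h$ interior to that interval. Your proposal never states this nondegeneracy, let alone proves it; instead you misidentify the oscillating quantity (the critical Ecalle height converges under perturbation, since the transit map preserves heights --- what rotates is the horizontal phase, and the height oscillation along the ray is a \emph{consequence} of the nondegeneracy, not a substitute for it). The paper proves the nondegeneracy in two steps: if $R_c(t)$ projected to a horizontal line, a Schwarz-reflection argument matching the reflection $\iota_1$ in the Fatou coordinate with the B\"ottcher-coordinate reflection $\iota_2$ across the radial line at angle $t$ would force the rational lamination of $f_c$ to be invariant under $s\mapsto 2t-s$ (Lemma \ref{horizontal implies invariance of lamination}); and a concrete orbit-portrait computation shows such invariance is impossible for odd period $k\neq 1$ (Lemma \ref{invariant lamination only for period 1}). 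Your appeal to ``adapting the umbilical cord wiggling analysis'' does not fill this hole: that analysis concerns the parabolic Hubbard tree and arcs inside $\mathcal{M}_d^*$, not dynamical rays in the basin of infinity, and invoking it leaves precisely the needed lemma unproved.

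Two further points would still need attention even granting the nondegeneracy. First, your argument treats all arcs alike, but the root-arc case ($t$ of even period $2k$, landing through one of two accesses) is genuinely different: there the ray $R_c(t)$ is not fixed by the antiholomorphic return map, so its height interval $[l_t(c_h),u_t(c_h)]$ need not be symmetric about $0$, and one must separately produce a parameter $c_h$ on the arc with $h\in\left(l_t(c_h),u_t(c_h)\right)$; the paper does this by a period-doubling perturbation argument near the ends of the arc. For co-root arcs this is automatic because the interval is of the form $[-h,h]$. Second, your treatment of the period-one exclusion is an unsupported assertion: the ``additional symmetry'' at period one is exactly the lamination invariance above (which genuinely holds there, e.g.\ $R_{c_d}(0)$ lies on the real line and projects to the equator), and the claim that it is ``absent along a generic sub-arc'' for higher periods is both unproved and too weak --- the argument requires its failure at every parameter of the arc, which is what the combinatorial lemma delivers.
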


The wiggling behavior is stated precisely, and proved in Section \ref{wiggle}. Its proof is based on the analysis of certain geometric properties of the repelling Fatou coordinates, and transferring them to the parameter plane by a perturbation argument. Section \ref{combinatorics_tells_wiggling} gives a complete description of which rational parameter rays of the multicorns land, and which ones have this wiggling property, in terms of the combinatorics of the angle.

It is worth noting that non-trivial accumulation of some stretching rays in the parameter space of real cubic polynomials was proved by Nakane and Komori in \cite{KN} by different methods. It has been empirically observed that there are infinitely many small tricorn-like sets in the parameter space of real cubic polynomials. Our techniques can be naturally generalized to these small tricorn-like sets (of course, these need to be defined rigorously) yielding the non-trivial accumulation of the stretching rays that approach the parabolic arcs on the boundaries of these small tricorn-like sets. More generally, one expects the existence of tricorn-like sets in any family of polynomials or rational maps with (at least) two critical orbits such that a pair of critical orbits are symmetric with respect to an antiholomorphic involution. In a recent unpublished manuscript \cite{BBM}, Bonifant, Buff, and Milnor studied the parameter space of antipode preserving cubic rational maps, and one can observe many tricorn-like sets appearing there.

In the last section, we will study another topological property of the multicorns. In \cite{HS}, it was asked whether the parabolic arcs of the multicorns can contain undecorated sub-arcs. More precisely, we say that a parabolic arc $\mathcal{C}$ on the boundary of a hyperbolic component $H$ of odd period of $\mathcal{M}_d^*$ contains an undecorated sub-arc if there exists a parameter $c$ lying on $\mathcal{C}\subset \partial H$, and an open neighborhood $U$ of $c$ such that $\overline{U}\setminus \overline{H}$ is contained in the complement of $\mathcal{M}_d^*$; i.e. $(\overline{U}\setminus \overline{H})\cap \mathcal{M}_d^*=\emptyset$. In section \ref{undecorated}, we answer this question affirmatively by showing that:
 
\begin{theorem}[Undecorated Arcs on The Boundary]\label{period 1 multicorns}
For $d \geq 2$, every period $1$ parabolic arc of $\mathcal{M}_d^*$ contains an undecorated sub-arc.
\end{theorem}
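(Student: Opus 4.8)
The plan is to produce the undecorated sub-arc around the unique real parameter lying on each period $1$ arc, combining the explicit parametrization of these arcs with a perturbative transfer from the dynamical to the parameter plane. First I would \emph{reduce to the real arc by symmetry}. For any $(d+1)$-st root of unity $\nu$, the map $z \mapsto \nu z$ conjugates $f_c$ to $f_{\bar\nu c}$, so rotation by a $(d+1)$-st root of unity is a symmetry of $\mathcal{M}_d^*$ permuting the $d+1$ period $1$ parabolic arcs, while complex conjugation is a further symmetry fixing the one real arc $\mathcal{C}$. Hence it suffices to find an undecorated sub-arc on $\mathcal{C}$. Writing $r_0 = d^{-1/(d-1)}$, the fixed point $z_0$ is parabolic exactly when $|z_0| = r_0$ (since the multiplier of $z_0$ under the holomorphic second iterate $f_c^2$ equals $d^2|z_0|^{2(d-1)}$), and $\mathcal{C}$ is parametrized by $c(\theta) = r_0 e^{i\theta} - r_0^d e^{-i d\theta}$. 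The real parameter $c_0 = c(0) = r_0 - r_0^d = \tfrac{d-1}{d}\, r_0 > 0$ is an interior point of $\mathcal{C}$ (the cusps sit at $\theta = (2k+1)\pi/(d+1)$), with real parabolic fixed point $z_0 = r_0$ and, by the conjugation symmetry, critical Ecalle height $0$.

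Next I would record the \emph{escape along the real axis}. For real $c$, the map $f_c$ restricts to $x \mapsto x^d + c$, whose critical orbit is $0, c, c^d + c, \dots$. Since $\min_{x \ge 0}(x^d + c - x) = c - c_0$, attained at $x = r_0$, every real $c > c_0$ satisfies $x^d + c > x$ on $[0,\infty)$, so the critical orbit increases monotonically to $+\infty$ and $c \notin \mathcal{M}_d^*$. Thus $H$ lies locally to the side of $\mathcal{C}$ containing $0$, while the open real segment just beyond $c_0$ on the opposite side consists of escaping parameters. This pins down $c_0$ as the center of the candidate sub-arc and identifies the ``outer'' side on which escape must be established.

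The heart of the proof is to \emph{upgrade this one orbit to a two-dimensional neighborhood}, i.e.\ to show escape on a full one-sided neighborhood $\overline{U} \setminus \overline{H}$ of a sub-arc of $\mathcal{C}$ around $c_0$. I would analyze the dynamical plane of $f_{c_0}$: the holomorphic map $f_{c_0}^2$ has a simple parabolic fixed point at $z_0$ with a single attracting petal (meeting the critical orbit) and a single repelling petal, and $f_{c_0}$ descends to an antiholomorphic involution of the associated Ecalle cylinders. Perturbing $c$ to the outer side opens the parabolic gate; for such $c$ the attracting and repelling Fatou coordinates persist on large domains, and the fate of the critical orbit is governed by the position of the critical value in the perturbed repelling cylinder. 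Using the geometric control of the repelling Fatou coordinate — the same mechanism underlying Theorem~\ref{most rays wiggle} — together with the facts that at $c_0$ the critical Ecalle height is $0$ and the escape direction is real, I would show that for all $c$ in a one-sided neighborhood of a sub-arc the critical value is carried through the gate into the repelling region and then out to infinity, so that the entire neighborhood lies in $\mathbb{C} \setminus \mathcal{M}_d^*$ and the defining condition of an undecorated sub-arc holds.

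The main obstacle is precisely the \emph{no-recapture} statement, since recapture of the critical orbit after it traverses the gate is exactly the mechanism that produces decorations (period-doubled components and small multicorn- or Mandelbrot-like copies) elsewhere along $\mathcal{C}$ and at the cusps. Because parabolic implosion limits are discontinuous and depend on the phase of approach, the escape estimate must be made uniform over a two-real-dimensional set of parameters, and it must exploit the special critical Ecalle height $0$ and the reflection symmetry at $c_0$ to guarantee that escape, rather than recapture, prevails on an entire one-sided neighborhood. Securing this uniform control of the repelling coordinate is where the technical effort concentrates; once it is in place, the conclusion is immediate.
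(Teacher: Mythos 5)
Your setup coincides with the paper's: reduce by the $(d+1)$-fold rotational symmetry to the real arc, center attention on the real parameter $c_d=d^{d/(1-d)}(d-1)$ of critical Ecalle height $0$, and try to transfer dynamical information to a one-sided parameter neighborhood via perturbed Fatou coordinates and the transit map. But the proposal stops exactly where the proof has to happen. You never identify \emph{which} geometric property of the repelling cylinder of $f_{c_d}$ is to be transferred, and you explicitly defer the escape (``no-recapture'') step as the place ``where the technical effort concentrates'' without indicating how to carry it out. The missing ingredient is the following concrete fact: because $f_{c_d}$ commutes with complex conjugation, the incoming and outgoing equators lie on the real axis, the dynamical ray $R_{c_d}(0)$ is contained in the real axis and hence projects exactly onto the equator of the repelling Ecalle cylinder; equivalently, the projection of the Julia set into that cylinder is a pair of closed curves symmetric about the equator, hence at positive distance from it. Consequently the projection of the basin of infinity into the repelling cylinder contains a \emph{round} cylinder $\mathbb{S}^1\times[-\epsilon,\epsilon]$ about the equator. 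This is the entire geometric content of the theorem.

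Once this is in hand, the obstacle you flag dissolves, and no uniform implosion estimate over the phase of approach is needed. By continuity of the perturbed Fatou coordinates on $\overline{U}\setminus H$ and the fact that the basin of infinity cannot shrink abruptly under perturbation, one chooses $U\ni c_d$ so that for every $c\in\overline{U}\setminus H$ the round cylinder $\mathbb{S}^1\times[-\epsilon/2,\epsilon/2]$ stays inside the projection of the basin of infinity, while the critical value has incoming Ecalle height in $[-\epsilon/4,\epsilon/4]$. Since the transit map preserves Ecalle height, for $c\in\overline{U}\setminus\overline{H}$ some point of the critical orbit projects into the outgoing cylinder at height in $[-\epsilon/4,\epsilon/4]$, hence into the projection of the basin of infinity; by complete invariance of the basin, that point --- and therefore the whole critical orbit --- escapes, so $c\notin\mathcal{M}_d^*$. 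Recapture is impossible not because of a delicate estimate but because membership in the basin of infinity is already decided at the level of the cylinder. (Your real-axis observation that the critical orbit escapes monotonically for real $c>c_d$ is correct but carries no weight for the two-dimensional neighborhood; it is not used in, nor does it substitute for, the symmetry argument above.)
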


Once again, the key idea is to transfer a geometric property of the repelling Ecalle cylinder to the parameter plane using perturbation techniques. There are some interesting consequences of the previous theorem. One of them is that the centers of hyperbolic components as well as the Misiurewicz parameters (with strictly pre-periodic critical points) are not dense on the boundary of $\mathcal{M}^{\ast}_d$ (Corollary \ref{centers not dense}). This is another item in the list of the topological differences between the multicorns and the multibrot sets. The fact that the centers of hyperbolic components (or the Misiurewicz parameters) are dense on the boundaries of the multibrot sets follows by an easy application of Montel's theorem.

We would like to thank Adam Epstein for many helpful discussions. Special thanks go to Dierk Schleicher for his useful suggestions to improve the original manuscript, and for allowing us to reproduce one of the figures from \cite{HS}. The second author gratefully acknowledges the support of Deutsche Forschungsgemeinschaft DFG during this work.

\section{Parabolic Points in Antiholomorphic Dynamics}\label{Secbackground}
In this section, we briefly recall some known results on antiholomorphic dynamics and their parameter spaces, which we will have need for in the rest of the paper. The next result was proved by Nakane (see \cite{Na1}).
\begin{theorem}[Real-Analytic Uniformization]\label{RealAnalUniformization}
The map $\Phi : \mathbb{C} \setminus \mathcal{M}^{\ast}_d \rightarrow \mathbb{C} \setminus \overline{\mathbb{D}}$, defined by $c \mapsto \phi_c(c)$ (where $\phi_c$ is the B\"{o}ttcher coordinate near $\infty$ for $f_c$, normalized so that it is tangent to the identity at $\infty$) is a real-analytic diffeomorphism. In particular, the multicorns are connected.
\end{theorem}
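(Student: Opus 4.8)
The plan is to follow the Douady--Hubbard uniformization of the Mandelbrot set, making the modifications forced by the antiholomorphicity of $f_c$. First I would construct the B\"ottcher coordinate. Although $f_c(z)=\overline{z}^d+c$ is only antiholomorphic, its second iterate $f_c^{\circ 2}(z)=(z^d+\overline{c})^d+c$ is a genuine holomorphic polynomial of degree $d^2$ tangent to $z\mapsto z^{d^2}$ at $\infty$. Hence the classical B\"ottcher theorem yields a unique conformal germ $\phi_c$ at $\infty$, tangent to the identity, with $\phi_c\circ f_c^{\circ 2}=\phi_c^{\,d^2}$; applying uniqueness one checks that this same $\phi_c$ satisfies the antiholomorphic functional equation $\phi_c\circ f_c=\overline{\phi_c}^{\,d}$ and is holomorphic in $z$. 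As in the holomorphic theory, $\phi_c$ then extends conformally to the region $\{z:G_c(z)>G_c(0)\}$ of points whose Green's function value exceeds that of the critical point $0$, where $G_c$ is the escape-rate function of the basin of $\infty$.

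Next I would define $\Phi(c)=\phi_c(c)$ on the escape locus $\mathbb{C}\setminus\mathcal{M}_d^{\ast}$ and record its basic properties. Since $G_c\circ f_c=d\,G_c$, the critical value satisfies $G_c(c)=G_c(f_c(0))=d\,G_c(0)$; when $c$ escapes we have $G_c(0)>0$, so $c$ lies in the domain of $\phi_c$ and $|\Phi(c)|=e^{G_c(c)}=e^{d\,G_c(0)}>1$, i.e.\ $\Phi$ indeed maps into $\mathbb{C}\setminus\overline{\mathbb{D}}$. For real-analyticity the key device is to complexify the conjugate variable: writing $F_{u,v}(z)=(z^d+v)^d+u$, the family $F_{u,v}$ is jointly holomorphic in $(u,v,z)$, and its B\"ottcher coordinate $\Psi_{u,v}$ depends holomorphically on all three variables on the appropriate domain. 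Setting $\widetilde{\Phi}(u,v)=\Psi_{u,v}(u)$, one has $\Phi(c)=\widetilde{\Phi}(c,\overline{c})$ because $\phi_c=\Psi_{c,\overline{c}}$ and $c$ is the relevant critical value; hence $(c,z)\mapsto\phi_c(z)$, and therefore $\Phi$, is real-analytic. (The involution $\phi_{\overline{c}}(\overline{z})=\overline{\phi_c(z)}$, which follows from $f_{\overline{c}}\circ\sigma=\sigma\circ f_c$ with $\sigma(z)=\overline{z}$, is also convenient and yields the symmetry $\Phi(\overline{c})=\overline{\Phi(c)}$.)

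It then remains to show that $\Phi$ is a diffeomorphism onto $\mathbb{C}\setminus\overline{\mathbb{D}}$. I would first verify that $\Phi$ is proper: if $\Phi(c)$ stays in a compact subset of $\mathbb{C}\setminus\overline{\mathbb{D}}$ then $G_c(0)=\tfrac1d\log|\Phi(c)|$ is bounded away from both $0$ and $\infty$, which confines $c$ to a compact subset of the escape locus. Granting that $\Phi$ is a local diffeomorphism with everywhere positive Jacobian (the point addressed below), it is then an orientation-preserving proper local diffeomorphism onto the connected surface $\mathbb{C}\setminus\overline{\mathbb{D}}$, hence a covering map; its degree equals the signed count of preimages, which I would read off near $\infty$, where $\Phi(c)=c+o(c)$ is asymptotic to the identity, injective, and orientation-preserving, forcing degree $1$. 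A degree-$1$ covering is a diffeomorphism. Extending by $\infty\mapsto\infty$ gives a homeomorphism $\widehat{\mathbb{C}}\setminus\mathcal{M}_d^{\ast}\to\widehat{\mathbb{C}}\setminus\overline{\mathbb{D}}$ onto a topological disk, whence $\mathcal{M}_d^{\ast}$ is connected.

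The hard part is the step I deferred: proving that the real Jacobian of $\Phi$ is everywhere positive, equivalently the strict inequality $|\partial_c\Phi(c)|>|\partial_{\overline{c}}\Phi(c)|$. In the Mandelbrot case this is free because the analogous map is holomorphic, but here $\Phi$ genuinely depends on $\overline{c}$ through the inner $\overline{c}$ in $f_c^{\circ 2}$, so $\partial_{\overline{c}}\Phi\not\equiv0$ and $\Phi$ is not conformal; one cannot simply invoke a non-vanishing complex derivative. My expectation is to control this inequality through the complexified family above: on the totally real slice $\{v=\overline{c}\}$ the Wirtinger calculus gives $\partial_c\Phi=(\partial_u\widetilde{\Phi})(c,\overline{c})$ and $\partial_{\overline{c}}\Phi=(\partial_v\widetilde{\Phi})(c,\overline{c})$, so the Jacobian is $|\partial_u\widetilde{\Phi}|^2-|\partial_v\widetilde{\Phi}|^2$, and one must show that the conjugate direction $\partial_v\widetilde{\Phi}$ is strictly dominated by $\partial_u\widetilde{\Phi}$. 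This is where the real analytic, non-holomorphic nature of the uniformization makes the argument genuinely more delicate than in the holomorphic setting, and I anticipate it to be the main obstacle, to be handled by combining the holomorphicity of $\widetilde{\Phi}$ in $(u,v)$ with the boundary behaviour of $G_c$ as $c\to\mathcal{M}_d^{\ast}$ and the asymptotics as $c\to\infty$.
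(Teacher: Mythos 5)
Your construction of $\phi_c$ through the holomorphic second iterate, the functional equation $\phi_c\circ f_c=\overline{\phi_c}^{\,d}$, real-analyticity via the complexified family $F_{u,v}(z)=(z^d+v)^d+u$, and properness are all correct, and so is the topological endgame \emph{conditional on local injectivity}. But the step you explicitly defer --- that the Jacobian $|\partial_c\Phi|^2-|\partial_{\overline{c}}\Phi|^2$ never vanishes --- is the entire mathematical content of the theorem beyond Douady--Hubbard, and your proposal contains no argument for it. The concluding suggestion (combine holomorphicity of $\widetilde{\Phi}$ in $(u,v)$ with the boundary behaviour of $G_c$ and the asymptotics at $\infty$) is not a mechanism: real-analytic maps obey no maximum-principle-type statement that converts boundary information into a pointwise derivative inequality in the interior, and you do not indicate how such an inequality would arise. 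The gap is fatal to everything that follows, because for maps that are merely real-analytic, properness plus injectivity near $\infty$ does not exclude folds: a proper, smooth, degree-one map of a planar domain can have a curve along which the Jacobian vanishes, with nearby values having three preimages counted $+1+1-1$. It is exactly holomorphy that rules this out in the Mandelbrot case (a proper holomorphic map is automatically a branched covering with a well-defined degree), and that is the luxury lost in the antiholomorphic setting; so your chain ``proper local diffeomorphism $\Rightarrow$ covering $\Rightarrow$ degree $1$ $\Rightarrow$ diffeomorphism'' cannot begin until the Jacobian estimate is supplied.

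For comparison: the paper under review does not prove this theorem at all; it quotes it from Nakane \cite{Na1}, where the bijectivity of $\Phi$ is obtained not by a pointwise Jacobian estimate but by dynamical arguments (quasiconformal deformation of maps in the escape locus, in the spirit of Branner--Hubbard stretching, together with rigidity). An alternative route, closest in spirit to your complexification, is to consider the two-complex-dimensional escape-locus map $(u,v)\mapsto\bigl(\Psi_{u,v}(u),\Psi_{v,u}(v)\bigr)$ recording the B\"ottcher positions of both critical values; it intertwines the antiholomorphic involutions $(u,v)\mapsto(\overline{v},\overline{u})$ and $(\lambda,\mu)\mapsto(\overline{\mu},\overline{\lambda})$, and on the totally real slice $v=\overline{u}$ it restricts precisely to $c\mapsto(\Phi(c),\overline{\Phi(c)})$. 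If one proves that this two-variable map is locally biholomorphic (a Branner--Hubbard-type theorem, compare \cite{BH1}), then invertibility of $D\Phi$ follows by restriction to the totally real slices. Your $\widetilde{\Phi}(u,v)=\Psi_{u,v}(u)$ is ``half'' of this map, so your setup is a sensible starting point --- but that two-variable statement is itself a genuine theorem, and until you supply it (or a substitute such as the quasiconformal rigidity argument), the proposal is an outline rather than a proof.
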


The previous theorem also allows us to define parameter rays of the multicorns. 
\begin{definition}[Parameter Ray]
The parameter ray at angle $\theta$ of the multicorn $\mathcal{M}^{\ast}_d$, denoted by $\mathcal{R}_{\theta}^d$, is defined as $\{ \Phi^{-1}(r e^{2 \pi i \theta}) : r > 1 \}$, where $\Phi$ is the real-analytic diffeomorphism from the exterior of $\mathcal{M}_d^*$ to the exterior of the closed unit disc in the complex plane constructed in Theorem \ref{RealAnalUniformization}.
\end{definition}

\begin{remark}
Some comments should be made on the definition of the parameter rays. Observe that unlike the multibrot sets, the parameter rays of the multicorns are not defined in terms of the Riemann map of the exterior. In fact, the Riemann map of the exterior of $\mathcal{M}_d^*$ has no obvious dynamical meaning; we have defined the parameter rays via a dynamically defined diffeomorphism of the exterior of $\mathcal{M}_d^*$. Note that since the second iterate of $f_c(z)=\overline{z}^d+c$ is the holomorphic polynomial $f_c^{\circ 2}(z)= (z^d+\overline{c})^d+c$, the family $\{f_c\}_{c\in\mathbb{C}}$ sits as the real two-dimensional slice $a=\overline{b}$ in the holomorphic family of polynomials $\{(z^d+a)^d+b\}_{a,b\in\mathbb{C}}$. It is now easy to check that our definition of parameter rays of the multicorns agrees with the notion of stretching rays (compare \cite{Lei,KN}) in the family of polynomials $\{(z^d+a)^d+b\}_{a,b\in\mathbb{C}}$.
\end{remark}

One of the main features of the antiholomorphic parameter spaces is the existence of abundant parabolics. In particular, the boundaries of odd period hyperbolic components of the multicorns consist only of parabolic parameters.
\begin{lemma}[Indifferent Dynamics of Odd Period]\label{LemOddIndiffDyn}  
The boundary of a hyperbolic component of odd period $k$ consists 
entirely of parameters having a parabolic orbit of exact period $k$. In appropriate 
local conformal coordinates, the $2k$-th iterate of such a map has the form 
$z\mapsto z+z^{q+1}+\ldots$ with $q\in\{1,2\}$. 
\end{lemma}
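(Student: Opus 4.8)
The plan is to exploit the parity of $k$. Since $f_c$ is antiholomorphic, the first-return map $f_c^{\circ k}$ of a period-$k$ cycle is antiholomorphic when $k$ is odd, while its square $f_c^{\circ 2k}$ is holomorphic. First I would fix a point $z_0$ of the cycle and choose a local holomorphic coordinate $w$ vanishing at $z_0$, so that $f_c^{\circ k}(w)=\overline{\psi(w)}$ for a holomorphic germ $\psi(w)=aw+O(w^2)$ with $\psi(0)=0$. A direct computation then gives
\[
f_c^{\circ 2k}(w)=\overline{\psi\bigl(\overline{\psi(w)}\bigr)}=|a|^2\,w+O(w^2),
\]
so the multiplier of the \emph{holomorphic} return map $f_c^{\circ 2k}$ at $z_0$ is $|a|^2$, a non-negative real number. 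Inside the hyperbolic component this multiplier lies in $[0,1)$, and as $c$ approaches the boundary it tends to $1$; being real and non-negative, the only indifferent value it can attain is $+1$. Hence every boundary parameter carries a cycle whose holomorphic return multiplier is exactly $1$, that is, a parabolic cycle. This is the crux of the first assertion, and it simultaneously explains why the multiplier is forced to be $+1$ rather than an arbitrary root of unity.

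Next I would check that the limiting parabolic orbit has exact period $k$. Throughout the component the attracting cycle has exact period $k$, its points being simple solutions of $f_c^{\circ k}(z)=z$ that stay separated from the lower-period cycles; by continuity of these solutions and of the multiplier, no period drop can occur, since a collapse to a cycle of period $k'\mid k$ with $k'<k$ would force the return multiplier of $f_c^{\circ 2k}$ to detach from the non-negative real axis near the boundary, contradicting the computation above. I regard this as routine once the multiplier computation is in hand, and would make it precise using the local structure of the zero set of $f_c^{\circ k}(z)-z$.

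For the normal form, the return multiplier being $+1$ lets me write $f_c^{\circ 2k}(w)=w+b_{q+1}w^{q+1}+\cdots$ with $b_{q+1}\neq 0$, which becomes $w\mapsto w+w^{q+1}+\cdots$ after a holomorphic coordinate change; by the Leau--Fatou flower theorem $z_0$ then has $q$ attracting and $q$ repelling petals. The main obstacle is proving $q\in\{1,2\}$, and here the antiholomorphic structure and unicriticality interact. Because the return multiplier equals $1$ (not another root of unity), $f_c^{\circ 2k}$ fixes each of its $q$ attracting petals individually, so the antiholomorphic return $f_c^{\circ k}$ acts on this set of $q$ petals as an involution, with, say, $f$ fixed petals and $t$ transpositions, whence $q=f+2t$. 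A fixed petal generates an $f_c$-cycle of attracting petals of length $k$, while a transposed pair generates one of length $2k$; counting the $kq$ attracting petals of the whole orbit shows that the total number of \emph{cycles} of attracting petals equals $f+t$. On the other hand, $f_c$ is unicritical, so by Fatou's theorem there is exactly one cycle of immediate parabolic basins, hence exactly one cycle of attracting petals, giving $f+t=1$. The only solutions are $(f,t)=(1,0)$, yielding $q=1$, and $(f,t)=(0,1)$, yielding $q=2$, which is precisely the claimed dichotomy. The delicate point I expect to dwell on is the petal bookkeeping: verifying that the petals are genuinely permuted by $f_c$ in cycles of the stated lengths and that the single critical orbit accounts for exactly one such cycle.
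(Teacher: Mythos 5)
Your overall strategy is the same as that of the paper's source (the paper proves this lemma by citing \cite[Lemma 2.8]{MNS}): the antiholomorphic derivative computation showing that the multiplier of $f_c^{\circ 2k}$ at an odd-period cycle is $|a|^2\in[0,\infty)$, hence equal to $+1$ at any indifferent boundary parameter, and the petal bookkeeping in which $f_c^{\circ k}$ acts as an involution on the $q$ petals (possible because the multiplier is $+1$, so $f_c^{\circ 2k}$ fixes each petal), giving $q=f+2t$ and $f+t=1$ by unicriticality, whence $q\in\{1,2\}$. Both of these parts are essentially correct. One gloss worth tightening: Fatou's theorem applies to holomorphic maps, so it should be run through $g=f_c^{\circ 2}$, whose critical points are $0$ \emph{and} the $d$ points of $f_c^{-1}(0)$; since an $f_c$-invariant cycle of immediate basins containing a point $w\in f_c^{-1}(0)$ also contains $f_c(w)=0$, one still concludes that there is at most one $f_c$-cycle of petals, which is what your count $f+t=1$ needs.

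The genuine gap is in the exact-period step. Your stated mechanism --- that a collapse onto a cycle of period $k'\mid k$, $k'<k$, ``would force the return multiplier of $f_c^{\circ 2k}$ to detach from the non-negative real axis'' --- is false. Since $k$ is odd, $k'$ is also odd, so the same derivative computation makes the multiplier $\lambda_0$ of $f_{c_0}^{\circ 2k'}$ at the limit cycle real and non-negative, and the multiplier of $f_c^{\circ 2k}$ along the collapsing $k$-cycle converges to $\lambda_0^{k/k'}\in[0,1]$: nothing ever leaves the non-negative real axis, and no contradiction arises at the level of multipliers. What reality actually buys you is that $\lambda_0=+1$ exactly: a collapse forces $\lambda_0^{k/k'}=1$ (otherwise $z_0$ is a simple fixed point of $f^{\circ 2k}$ and the implicit function theorem prevents distinct period-$k$ points from converging to it), and the only non-negative real root of unity is $1$. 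The contradiction must then come from a local root count: if $\lambda_0=1$ with $q$ petals, then $z_0$ is a root of multiplicity exactly $q+1$ of \emph{both} $f_{c_0}^{\circ 2k'}(z)-z$ and $f_{c_0}^{\circ 2k}(z)-z$, so by Rouch\'e/Hurwitz the fixed points of $f_c^{\circ 2k'}$ near $z_0$ already exhaust all fixed points of $f_c^{\circ 2k}$ near $z_0$ for nearby $c$, leaving no room for a cycle of exact period $k$ to merge in. Without this (or an equivalent) argument the exact-period claim is unproven, and it cannot be handled by soft continuity: in the holomorphic family the analogous period drop genuinely occurs (satellite roots on boundaries of hyperbolic components of the Mandelbrot set), so any correct proof must use the equality $\lambda_0=+1$ in an essential way, not merely the reality of the multiplier.
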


\begin{proof} 
See \cite[Lemma 2.8]{MNS}.
\end{proof}

This leads to the following classification of odd periodic parabolic points.
\begin{definition}[Parabolic Cusps]\label{DefCusp}
A parameter $c$ will be called a {\em cusp point} if it has a parabolic 
periodic point of odd period such that $q=2$ in the previous lemma. Otherwise, it is called a \emph{simple} parabolic parameter.
\end{definition}

In holomorphic dynamics, the local dynamics in attracting petals of parabolic periodic points is well-understood: there is a local coordinate $\zeta$ which conjugates the first-return dynamics to the form $\zeta\mapsto\zeta+1$ in a right half place (see Milnor~\cite[Section~10]{M1new}. Such a coordinate $\zeta$ is called a \emph{Fatou coordinate}. Thus the quotient of the petal by the dynamics is isomorphic to a bi-infinite cylinder, called the \emph{Ecalle cylinder}. Note that Fatou coordinates are uniquely determined up to addition by a complex constant. 

In antiholomorphic dynamics, the situation is at the same time restricted and richer. Indifferent dynamics of odd period is always parabolic because for an indifferent periodic point of odd period $k$, the $2k$-th iterate is holomorphic with positive real multiplier, hence parabolic as described above. On the other hand, additional structure is given by the antiholomorphic intermediate iterate. 

\begin{lemma}[Fatou Coordinates]\label{normalization of fatou}
Suppose $z_0$ is a parabolic periodic point of odd period $k$ of $f_c$ with only one petal (i.e.\ $c$ is not a cusp), and $U$ is a periodic Fatou component with $z_0 \in \partial U$. Then there is an open subset $V \subset U$ with $z_0 \in \partial V$, and $f_c^{\circ k} (V) \subset V$ so that for every $z \in U$, there is an $n \in \mathbb{N}$ with $f_c^{\circ nk}(z)\in 
V$. Moreover, there is a univalent map $\psi \colon V \to \mathbb{C}$ with $\psi(f_c^{\circ k}(z)) = \overline{\psi(z)}+1/2$, and $\psi(V)$ contains a right half plane. This map $\psi$ is unique up to horizontal translation. 
\end{lemma}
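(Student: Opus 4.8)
The plan is to reduce the statement to the classical holomorphic parabolic theory applied to the second iterate $g:=f_c^{\circ 2k}$, and then to recover the antiholomorphic functional equation by exploiting the symmetry of the intermediate iterate $\sigma:=f_c^{\circ k}$. First I would observe that $g$ is holomorphic (a composition of $2k$ antiholomorphic maps) with a parabolic fixed point at $z_0$ of multiplier $1$; since $c$ is not a cusp, Lemma \ref{LemOddIndiffDyn} gives $q=1$, so $z_0$ has a single attracting direction. By the Leau--Fatou theory (Milnor \cite[Section~10]{M1new}) there is an attracting petal $P\subset U$ at $z_0$ and a univalent holomorphic Fatou coordinate $\Psi\colon P\to\mathbb{C}$ with $\Psi(g(z))=\Psi(z)+1$ and $\Psi(P)$ containing a right half plane; moreover $\Psi$ is unique up to an additive complex constant, and every point of the immediate basin $U$ is eventually absorbed into $P$ under iteration of $g$. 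Since $\sigma$ fixes $z_0$, commutes with $g$ (because $\sigma\circ g=\sigma^{\circ 3}=g\circ\sigma$), and the attracting direction is unique, $\sigma$ must preserve the attracting petal; hence I can shrink $P$ to an absorbing sub-petal $V$ with $\sigma(V)\subset V$, i.e.\ $f_c^{\circ k}(V)\subset V$. As $g=\sigma^{\circ 2}=f_c^{\circ 2k}$, the absorbing property of $V$ for $g$ yields that every $z\in U$ satisfies $f_c^{\circ nk}(z)\in V$ for some $n\in\mathbb{N}$.

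Next I would transfer $\Psi$ through the antiholomorphic map. Define $\chi\colon V\to\mathbb{C}$ by $\chi(z):=\overline{\Psi(\sigma(z))}$. Because $\Psi$ is holomorphic and $\sigma$ is antiholomorphic, $\Psi\circ\sigma$ is antiholomorphic, so $\chi$ is holomorphic and univalent; note $\chi$ is defined on $V$ since $\sigma(V)\subset V\subset P$. Using $\sigma\circ g=g\circ\sigma$ one computes $\chi(g(z))=\overline{\Psi(g(\sigma(z)))}=\overline{\Psi(\sigma(z))+1}=\chi(z)+1$, so $\chi$ is again a Fatou coordinate for $g$. Both $\Psi$ and $\chi$ extend to the whole immediate basin via the functional equation, so the uniqueness statement gives $\chi=\Psi+b$ for some constant $b$, that is, $\Psi(\sigma(z))=\overline{\Psi(z)}+\overline{b}$. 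Substituting $\sigma(z)$ for $z$ and using $\Psi(g(z))=\Psi(z)+1$ forces $\Psi(z)+1=\Psi(z)+b+\overline{b}$, whence $b+\overline{b}=1$ and $\mathrm{Re}(b)=\tfrac12$.

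Finally I would fix the normalization. Writing $\psi:=\Psi+a$ and requiring $\psi(\sigma(z))=\overline{\psi(z)}+\tfrac12$, a direct computation reduces to the single equation $a-\overline{a}=\tfrac12-\overline{b}$; since $\mathrm{Re}(b)=\tfrac12$ the right-hand side equals $i\,\mathrm{Im}(b)$ and is purely imaginary, so this determines $\mathrm{Im}(a)=\tfrac12\mathrm{Im}(b)$ while leaving $\mathrm{Re}(a)$ free. Every such $\psi$ satisfies the required relation, and $\psi(V)$ still contains a right half plane, as it differs from $\Psi(V)$ only by a translation. The residual freedom in $\mathrm{Re}(a)$ is precisely the addition of a real constant, i.e.\ a horizontal translation; conversely, if two maps both satisfy $\psi(\sigma(z))=\overline{\psi(z)}+\tfrac12$, their difference is a constant $a$ with $a=\overline{a}$, hence real. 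This yields uniqueness up to horizontal translation.

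I expect the main obstacle to be the careful construction of the $\sigma$-invariant absorbing petal $V$ and the bookkeeping of domains, so that both $\Psi$ and $\chi$ are genuine Fatou coordinates on a common petal to which the uniqueness theorem applies; once this is in place, the algebra forcing $\mathrm{Re}(b)=\tfrac12$ and pinning down $a$ is routine.
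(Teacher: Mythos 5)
Your proposal is correct and is essentially the argument the paper relies on: the paper gives no proof of its own but cites \cite[Lemma~2.3]{HS}, whose proof is exactly your reduction --- take the holomorphic Fatou coordinate $\Psi$ of $g:=f_c^{\circ 2k}$, observe that $z\mapsto\overline{\Psi(\sigma(z))}$ (with $\sigma:=f_c^{\circ k}$) is again a Fatou coordinate, so $\Psi\circ\sigma=\overline{\Psi}+\overline{b}$ with $\sigma^{\circ 2}=g$ forcing $b+\overline{b}=1$, and then adjust the additive constant to kill $\Im(b)$, leaving precisely a real (horizontal) translation of freedom. The one step you flag as the main obstacle, the $\sigma$-invariant absorbing petal, is settled by taking $V=P\cap\sigma(P)$: then $\sigma(V)\subset\sigma(P)\cap g(P)\subset\sigma(P)\cap P=V$, and the absorbing property follows from that of $P$ together with $\sigma(U)=U$.
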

\begin{proof} 
See \cite[Lemma 2.3]{HS}.
\end{proof}

The map $\psi$ will be called an  \emph{antiholomorphic Fatou coordinate} for the petal $V$. The antiholomorphic iterate interchanges both ends of the Ecalle cylinder, so it must fix one horizontal line around this cylinder (the \emph{equator}). The change of coordinate has been so chosen that the equator is the projection of the real axis.  We will call the vertical Fatou coordinate the \emph{Ecalle height}. Its origin is the equator. Of course, the same can be done in the repelling petal as well. The existence of this distinguished real line, or equivalently an intrinsic meaning to Ecalle height, is specific to antiholomorphic maps. 

The Ecalle height of the critical value plays a special role in antiholomorphic dynamics. The next theorem proves the existence of real-analytic arcs of non-cusp parabolic parameters on the boundaries of odd period hyperbolic components of the multicorns.

\begin{theorem}[Parabolic arcs]\label{parabolic arcs}
Let $c_0$ be a parameter such that $f_{c_0}$ has a parabolic orbit of odd period, and suppose that $c_0$ is not a cusp. Then $c_0$ is on a parabolic arc in the  following sense: there  exists a real-analytic arc of non-cusp parabolic parameters $c(t)$ (for $t\in\mathbb{R}$) with quasiconformally equivalent but conformally distinct dynamics of which $c_0$ is an interior point, and the Ecalle height of the critical value of $f_{c(t)}$ is $t$. 
\end{theorem}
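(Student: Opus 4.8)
The plan is to realize every prescribed critical Ecalle height by an explicit quasiconformal deformation of $f_{c_0}$ supported on the parabolic basin, and to read off the resulting parameter $c(t)$ via the measurable Riemann mapping theorem, with real-analytic dependence on $t$. First I would fix the setup afforded by the earlier results. By Lemma \ref{LemOddIndiffDyn} the parabolic cycle has odd period $k$ with $q=1$, since $c_0$ is not a cusp. Let $g = f_{c_0}^{\circ k}$, let $V$ be the attracting petal, and let $\psi$ be the antiholomorphic Fatou coordinate of Lemma \ref{normalization of fatou}, conjugating $g$ to $w \mapsto \overline{w} + 1/2$ on a right half-plane. On the Ecalle cylinder this map is an involution fixing the horizontal line $\{\operatorname{Im} w = 0\}$ setwise, namely the equator; the critical value $c_0$ lies in the attracting basin, so after iterating it into $V$ its image acquires a well-defined Ecalle height $h_0 = \operatorname{Im}\psi(\cdot)$.

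Next I would build the deformation. Given a target height $t$, choose a real-analytic odd increasing homeomorphism $\gamma$ of $\mathbb{R}$ with positive bounded derivative and $\gamma(h_0) = t$, and set $\Lambda(u + iv) = u + i\gamma(v)$. Since $\gamma$ is odd, $\Lambda$ fixes the equator and commutes with $w \mapsto \overline{w} + 1/2$. Pulling back the standard complex structure by $\Lambda$ yields a Beltrami form on the cylinder of norm $\sup\lvert(\gamma'-1)/(\gamma'+1)\rvert < 1$. Transporting this form to $V$ by $\psi^{-1}$, spreading it over the whole grand orbit of $V$ in the basin by the $f_{c_0}$-dynamics (possible because every point of the basin eventually iterates into $V$, by Lemma \ref{normalization of fatou}), and setting it to zero elsewhere, I obtain an $f_{c_0}$-invariant Beltrami form $\mu_t$ that is symmetric under the equator involution and vanishes near $\infty$.

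I would then straighten and identify the new map. Solving $\overline{\partial}\phi_t = \mu_t\, \partial\phi_t$ with $\phi_t$ normalized to fix $\infty$ and be tangent to the identity there, the conjugate $g_t := \phi_t \circ f_{c_0} \circ \phi_t^{-1}$ is a degree-$d$ antiholomorphic map by the invariance and equator-symmetry of $\mu_t$, it is polynomial because $\mu_t=0$ (so $\phi_t$ is conformal) near $\infty$, and it has a single critical point of multiplicity $d-1$ as the homeomorphic image of $0$. Such a map is affinely conjugate to $\overline{z}^d + c$, so after adjusting the normalization $g_t = f_{c(t)}$ for a unique $c(t)$. Quasiconformal conjugacy preserves the period-$k$ parabolic cycle and the non-cusp condition, and $c(h_0) = c_0$ because $\gamma = \mathrm{id}$ when $t = h_0$, so $c_0$ is an interior point. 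The relation $\widetilde{\psi}\circ\phi_t = \Lambda\circ\psi$ up to horizontal translation, where $\widetilde\psi$ is the Fatou coordinate of $f_{c(t)}$, shows that the critical Ecalle height of $f_{c(t)}$ equals $\gamma(h_0) = t$; choosing $\gamma$ appropriately lets $t$ range over all of $\mathbb{R}$.

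Finally, invoking the analytic dependence of solutions of the Beltrami equation on parameters gives that $c(t)$ is real-analytic in $t$; since Ecalle height is a conformal invariant, distinct values of $t$ give conformally distinct but quasiconformally equivalent maps, so the arc is genuine and non-constant. I expect the main obstacle to be twofold. First, one must set up the invariance and symmetry of $\mu_t$ precisely enough that the straightened map is genuinely \emph{antiholomorphic} and \emph{unicritical}, i.e.\ lands back in the family $\overline{z}^d + c$ rather than in a larger family of quasiregular maps. Second, one must carry out carefully the commuting-diagram bookkeeping relating the old and new Fatou coordinates to verify that the prescribed height is realized \emph{exactly} as $t$, and establish the real-analytic — not merely continuous — dependence of $c(t)$ on $t$.
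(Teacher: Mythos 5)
Your overall strategy---prescribe the critical Ecalle height by a quasiconformal deformation performed in the attracting Ecalle cylinder, spread it over the basin, straighten by the measurable Riemann mapping theorem, and identify the straightened map inside the family $\overline{z}^d+c$---is exactly the strategy of the proof this paper points to (the paper itself gives no argument; it cites \cite[Theorem 3.2]{MNS}). But your specific choice of deformation cannot prove the theorem. A global vertical shear $\Lambda(u+iv)=u+i\gamma(v)$ is compatible with the antiholomorphic return map only if, as you yourself note, $\gamma$ is odd (more generally, odd up to an additive constant that gets absorbed into the position of the new equator). Oddness forces $\gamma(0)=0$ and $\gamma(v)$ to have the same sign as $v$. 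Since $\Lambda$ commutes with $\iota(w)=\overline{w}+1/2$, the straightened map has its equator at $\{\Im w=0\}$ again, and its critical Ecalle height is exactly $\gamma(h_0)$. So your construction realizes only those $t$ with the same sign as $h_0$: if $h_0>0$ you get only $t>0$, if $h_0<0$ only $t<0$, and if $h_0=0$ you get nothing at all, because no odd increasing homeomorphism satisfies $\gamma(0)=t\neq 0$. Your closing claim that ``choosing $\gamma$ appropriately lets $t$ range over all of $\mathbb{R}$'' is therefore false, and this is not a removable technicality: any Beltrami form on the cylinder depending on the height coordinate alone is $\iota$-invariant only if it is symmetric under $v\mapsto -v$, so a shear can never push the critical value across, onto, or off the equator. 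The theorem demands the full line $t\in\mathbb{R}$ with $c_0$ an interior point, and the hardest case (critical Ecalle height $0$, which does occur and is dynamically the most important parameter on each arc) is precisely the one your deformation misses.

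The repair, which is in effect what the cited proof does, is to use an \emph{equivariant local push} instead of a global shear. Let $P$ be the class of the critical value in the incoming cylinder; note that $P$ and $\iota(P)$ are distinct (they differ by $1/2$ horizontally), so they admit disjoint neighborhoods $D$ and $\iota(D)$ even when $h_0=0$. For $s\in\mathbb{R}$ define $L_s$ to be the identity outside $D\cup\iota(D)$, on $D$ a quasiconformal push moving $P$ to $P+is$, and on $\iota(D)$ the conjugate map $\iota\circ L_s\circ\iota$. Then $L_s$ is orientation-preserving, of degree $+1$ on the cylinder (hence preserves each end and the normalization in which $f_{c_0}^{\circ 2k}$ acts as $w\mapsto w+1$), commutes with $\iota$, and moves the critical class to arbitrary height $h_0+s$, in particular across the equator; equivalently, $P$ and $\iota(P)$ project to a single point of the quotient M\"obius band, which is homogeneous, so the marked point can be moved anywhere. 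This family also fixes a second, lesser gap in your write-up: real-analyticity of $t\mapsto c(t)$ does not follow from ``for each $t$ choose some $\gamma$,'' since the parametrized Ahlfors--Bers theorem needs a coherent family of Beltrami forms depending (after complexifying the parameter) holomorphically on it, which the push construction with translation amount $s$ provides. With that replacement, the remaining steps of your argument---spreading the form over the grand orbit of the basin, antiholomorphicity, polynomiality and unicriticality of the straightened map, the Fatou-coordinate bookkeeping giving the new height, and conformal distinctness via invariance of Ecalle height---are sound.
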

\begin{proof} 
See \cite[Theorem 3.2]{MNS}.
\end{proof}

Following \cite{MNS}, we classify parabolic arcs into two types.

\begin{definition}[Root Arcs and Co-Root Arcs]\label{DefRootArc} 
We call a parabolic arc a \emph{root arc} if, in the dynamics of any 
parameter on this arc, the parabolic orbit disconnects the Julia set. 
Otherwise, we call it a \emph{co-root arc}.
\end{definition}

The structure of the hyperbolic components of odd period plays an important role in the global topology of the parameter spaces. Let $H$ be a hyperbolic component of odd period $k\neq1$ (with center $\tilde{c}$) of the multicorn $\mathcal{M}_d^{\ast}$. The first return map of the closure of the characteristic Fatou component of $\tilde{c}$ fixes exactly $d+1$ points on its boundary. Only one of these fixed points disconnects the Julia set, and is the landing point of two distinct dynamical rays at $2k$-periodic (under multiplication by $t\mapsto -dt$ (mod $1$)) angles. Let the set of the angles of these two rays be $S' = \{\alpha_1,\alpha_2 \}$. Each of the remaining $d$ fixed points is the landing point of precisely one dynamical ray at a $k$-periodic (under multiplication by $t\mapsto -dt$ (mod $1$)) angle; let the collection of the angles of these rays be $S = \{ \theta_1, \theta_2, \cdots, \theta_d \}$. We can, possibly after renumbering, assume that $0 < \alpha_1 < \theta_1 < \theta_2 < \cdots < \theta_d < \alpha_2$, and $\alpha_2 - \alpha_1 < \frac{1}{d}$. 

By \cite[Theorem 1.2]{MNS}, $\partial H$ is a simple closed curve consisting of $d+1$ parabolic arcs, and the same number of cusp points such that every arc has two cusp points at its ends. Exactly $1$ of these $d+1$ parabolic arcs is a root arc, and the parameter rays at angles $\alpha_1$ and $\alpha_2$ accumulate on this arc. The rest of the $d$ parabolic arcs are co-root arcs, and each of them contains the accumulation set of exactly one $\mathcal{R}_{\theta_i}^d$. Furthermore, the rational lamination remains constant throughout the closure of the hyperbolic component $H$ except at the cusp points.

The main technical tool used in the proof of the non-trivial accumulation of parameter rays is the perturbation of antiholomorphic parabolic points. We now briefly recall the concepts of near-parabolic antiholomorphic Fatou coordinates and the transit map. Our discussion will roughly follow \cite[\S 4]{HS}. The technique of perturbation of antiholomorphic parabolic points will allow us to transfer information from the dynamical planes to the parameter plane. A more general account on the theory of perturbation of parabolic points can be found in \cite{D2,Shi}.

For the rest of this section, we will assume that $c$ is a non-cusp parabolic parameter of odd period $k$ lying on a parabolic arc $\mathcal{C}$ on the boundary of a hyperbolic component $H$ (of period $k$). All of our discussions will concern the characteristic parabolic point of $f_c$. We will denote an attracting petal of $f_c$ by $V_c^{\mathrm{in}}$, and a repelling petal of $f_c$ by $V_c^{\mathrm{out}}$. There exists an open neighborhood $U$ of $c$ (in the parameter plane) such that for all $c' \in U^{-}:= U \setminus \overline{H}$, the characteristic parabolic point splits into two simple periodic points, and the perturbed Fatou coordinates can be followed throughout $U^-$. More precisely, for $c^{\prime} \in U^{-}$, there exist an incoming domain $V^\mathrm{in}_{c'}$, and an outgoing domain $V^\mathrm{out}_{c'}$ (such that they are disjoint) having the two simple periodic points on their boundaries. There exists a curve joining the two simple periodic points, which we call the ``gate'', such that the points in the incoming domain eventually transit through the gate, and escape to the outgoing domain (as shown in Figure \ref{gate}). Furthermore, there exist injective holomorphic maps $\psi_{c'}^{\mathrm{in}}: V^\mathrm{in}_{c'} \rightarrow \mathbb{C}$, and $\psi_{c'}^{\mathrm{out}}: V^\mathrm{out}_{c'} \rightarrow \mathbb{C}$ such that $\psi_{c'}^{\mathrm{in}}(f_{c'}^{\circ 2k}(z))=  \psi_{c'}^{\mathrm{in}}(z) +1$, whenever $z$ and $f_{c'}^{\circ 2k}(z)$ are both in $V^\mathrm{in}_{c'}$ (and the same is true for $\psi_{c'}^{\mathrm{out}}$). Since the map $f_{c'}^{\circ k}$ commutes with $f_{c'}^{\circ 2k}$, it induces antiholomorphic self-maps from $C^\textrm{in}_{c'}:= V^\textrm{in}_{c'}/f^{\circ 2k}_{c'}$ (respectively $C^\textrm{out}_{c'}:= V^\textrm{out}_{c'}/f^{\circ 2k}_{c'}$) to itself. As $f_{c'}^{\circ k}$ interchanges the two periodic points at the ends of the gate, it interchanges the ends of the cylinders, so it must fix a (necessarily unique) closed geodesic in the cylinders $\mathbb{C}/\mathbb{Z}$. This is similar to the situation at the parabolic parameter $c$, so we will call this invariant geodesic the equator. As for the parabolic parameter $c$, we will choose our Fatou coordinates such that they map the equators to the real line. Thus we can again define Ecalle height as the imaginary part in these Fatou coordinates. We will denote the Ecalle height of a point $z \in C^\textrm{in/out}_{c'}$ by $E(z)$.

We need to normalize our Fatou coordinates (recall that after we decide that the Fatou coordinate maps the equator to the real line, we are left with one real additive degree of freedom of the Fatou coordinate). We can choose two continuous functions $a, b:\overline{U^{-}} \rightarrow \mathbb{C}$ such that $a(c')$ (respectively $b(c')$) lies on the incoming (respectively outgoing) equator in $V^\mathrm{in}_{c'}$ (respectively in $V^\mathrm{out}_{c'}$), for all $c' \in \overline{U^{-}}$. We can normalize $\psi_{c'}^{\mathrm{in}}$ and $\psi_{c'}^{\mathrm{out}}$ by the requirements $\psi_{c'}^{\mathrm{in}}(a(c'))=0$ and  $\psi_{c'}^{\mathrm{out}}(b(c'))=0$, and assume that $\psi_{c'}^{\mathrm{out}}(V^\mathrm{out}_{c'})$ contains the vertical bi-infinite strip $\left[0,1\right]\times \mathbb{R}$. . With these normalization, we have that $(c',z) \mapsto \psi_{c'}^{\mathrm{in}}(z)$ and $(c',z)\mapsto \psi_{c'}^{\mathrm{out}}(z)$ are continuous functions on the open sets $\mathcal{V}^{\mathrm{in}}:=\{ (c',z): z \in V^\mathrm{in}_{c'}\}$ and  $\mathcal{V}^{\mathrm{out}}:=\{ (c',z): z \in V^\mathrm{out}_{c'}\}$ (respectively) in $\overline{U^{-}} \times \mathbb{C}$ (for $c'\in \partial H \cap \overline{U^{-}}$, $\psi_{c'}^{\mathrm{in}}$ and $\psi_{c'}^{\mathrm{out}}$ are respectively the attracting and repelling Fatou coordinates for $f_{c'}$ as in Lemma \ref{normalization of fatou}) \cite[\S 17]{D2}.

It follows that for every $c' \in U^{-}$, the quotients $C^\textrm{in}_{c'} := V^\textrm{in}_{c'}/f^{\circ 2k}_{c'}$ and $C^\textrm{out}_{c'} := V^\textrm{out}_{c'}/f^{\circ 2k}_{c'}$ (the quotients of $V^\textrm{in}_{c'}$ and $V^\textrm{out}_{c'}$ by the dynamics, identifying points that are on the same finite orbits entirely in $V^\textrm{in}_{c'}$ or in $V^\textrm{out}_{c'}$) are complex cylinders isomorphic to $\mathbb{C}/\mathbb{Z}$. The isomorphisms are given by Fatou coordinates which depend continuously on the parameter throughout $U^{-}$.

\begin{figure}[ht!]
\centering
\includegraphics[scale=0.36]{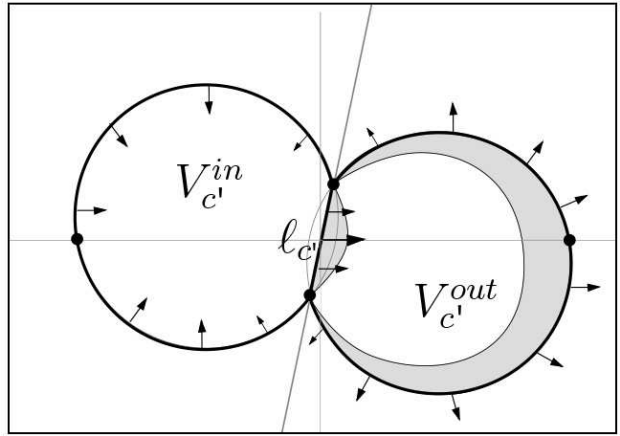}
\caption{The typical dynamical picture after perturbation of the parabolic point (Figure courtesy Dierk Schleicher).}
\label{gate}
\end{figure}

For $c'\in U^{-}$, the incoming and the outgoing cylinders are isomorphic to each other by a natural biholomorphism, namely $f^{\circ 2k}_{c'}$. This isomorphism is called the ``transit map'', and is denoted by $T_{c'}$. The transit map clearly depends continuously on the parameter $c' \in U^{-}$. It maps the fixed geodesic of the incoming cylinder to the fixed geodesic of the outgoing cylinder, and preserves the upper (respectively lower) ends of the cylinders. Thus it must preserve Ecalle heights. The existence of this special isomorphism allows us to relate the Ecalle heights of points in the incoming and the outgoing cylinders, and is one of the principal tools in our study.

Finally, we are in a position to state the key technical lemma that helps us to transfer dynamical information at a parabolic parameter to the parameter plane. One can define the disjoint unions :

\begin{align*}
\displaystyle C^\textrm{in} = \bigsqcup_{c' \in \overline{U^{-}}} C_{c'}^\textrm{in},\ \mathrm{and}\  C^\textrm{out} =\displaystyle \bigsqcup_{c' \in \overline{U^{-}}} C_{c'}^\textrm{out}.
\end{align*}

We topologize $C^\textrm{in}$ and $C^\textrm{out}$ by requiring the following trivializing maps $\Theta^{\mathrm{in}}$ and $\Theta^{\mathrm{out}}$ to be homeomorphisms:

\begin{minipage}{0.4\linewidth}
\begin{align*}
\Theta^{\mathrm{in}} : C^{\textrm{in}} \rightarrow \overline{U^{-}} \times \mathbb{C}/\mathbb{Z}\\
C_{c'}^{\mathrm{in}} \ni z \mapsto (c', \psi_{c'}^{\mathrm{in}}(z))
\end{align*}
\end{minipage}
\begin{minipage}{0.4\linewidth}
\begin{align*}
\Theta^{\mathrm{out}} : C^{\textrm{out}} \rightarrow \overline{U^{-}} \times \mathbb{C}/\mathbb{Z}\\
C_{c'}^{\mathrm{out}} \ni z \mapsto (c', \psi_{c'}^{\mathrm{out}}(z))
\end{align*}
\end{minipage}
\vspace{2mm}

Thus, $C^\textrm{in}$ and $C^\textrm{out}$ are topologically trivial bundles over $\overline{U^{-}}$ with fibers isomorphic to $\mathbb{C}/\mathbb{Z}$. Moreover, the representative of $a(c')$ (respectively $b(c')$) in $C_{c'}^{\mathrm{in}}$ (respectively in $C_{c'}^{\mathrm{out}}$) corresponds to the origin in $\mathbb{C}/\mathbb{Z}$. Note that $\mathbb{C}/\mathbb{Z}$ is equipped with a dynamically marked circle $\mathbb{R}/\mathbb{Z}$, which corresponds to the image of the equator under the Fatou coordinate. Choose a smooth curve $s \mapsto c(s)$ in $\overline{U}$ (in parameter space), parametrized by $s\in [0,\delta]$ for some $\delta >0$, with $c(0) = c$ and $c(s) \in U^{-}$ for $s > 0$. Choose a smooth curve $s \mapsto \zeta(s)$ (in the dynamical planes, typically the critical value), also defined for $s\in [0,\delta]$ such that $\zeta(s) \in V^\textrm{in}_{c(s)}$ for all $s\in [0,\delta]$. Then $s \mapsto \zeta(s)$ induces a map $\sigma : [0,\delta] \rightarrow C^\textrm{in}$ with $\sigma(s)\in C^\textrm{in}_{c(s)}$. The following was proved in \cite[Proposition 4.8]{HS}.

\begin{lemma}[Limit of Perturbed Fatou Coordinates]\label{spiral}
The curve $\gamma := s \mapsto T_{c(s)}(\sigma(s))$
in $C^\textrm{out}$, parametrized by $s \in (0, \delta]$, spirals as $s \downarrow 0$ towards the circle in $C_c^\textrm{out}$ at Ecalle height $E(\sigma(0))$.
\end{lemma}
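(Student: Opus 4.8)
The plan is to understand precisely what the transit map does to the Fatou coordinate as $s \downarrow 0$, and to show that the failure of the incoming and outgoing coordinates to agree produces a diverging horizontal translation while the Ecalle height (imaginary part) converges. Let me set up notation: write $w(s) := \psi_{c(s)}^{\mathrm{in}}(\zeta(s))$ for the incoming Fatou coordinate of the marked point, and let $\tilde{w}(s) := \psi_{c(s)}^{\mathrm{out}}(T_{c(s)}(\sigma(s)))$ be its image in the outgoing cylinder. Since the transit map $T_{c(s)} = f_{c(s)}^{\circ 2k}$ preserves Ecalle heights, we have $\operatorname{Im} \tilde{w}(s) = \operatorname{Im} w(s)$ for all $s$, and by continuity of the Fatou coordinates throughout $\overline{U^-}$ (guaranteed by the normalization and Lemma~\ref{normalization of fatou}), $\operatorname{Im} w(s) \to E(\sigma(0))$ as $s \downarrow 0$. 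So the Ecalle height of $\gamma(s)$ converges to the asserted value; the content of the lemma is the \emph{spiraling}, i.e.\ that $\operatorname{Re} \tilde{w}(s) \to +\infty$ (or $-\infty$) while the height stabilizes, so that the projection to $C_c^{\mathrm{out}} \cong \mathbb{C}/\mathbb{Z}$ winds around the cylinder infinitely often.

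\textbf{The core mechanism.} The key is the structure of the transit map in coordinates. First I would compare the incoming and outgoing coordinates at the \emph{same} perturbed parameter $c' \in U^-$. After perturbation the parabolic point splits into two simple periodic points, and near-parabolic theory (following \cite{D2,Shi}, as in \cite[\S 4]{HS}) tells us that passing from $\psi_{c'}^{\mathrm{in}}$ to $\psi_{c'}^{\mathrm{out}}$ through the gate introduces a translation by a quantity comparable to $\frac{\pi}{\sqrt{\text{(splitting)}}}$, equivalently of the form $\frac{\alpha(c')}{c' - c} + O(1)$ in a suitable local parameter, whose real part blows up as $c' \to c$. Concretely, the transit map $T_{c'}$, read in Fatou coordinates via $\Theta^{\mathrm{out}} \circ T_{c'} \circ (\Theta^{\mathrm{in}})^{-1}$, acts as $z \mapsto z + \tau(c')$ where $\tau(c')$ is the \emph{transition constant}, with $\operatorname{Im}\tau(c') \to 0$ (height preservation) but $\operatorname{Re}\tau(c') \to \pm\infty$ as $c' = c(s) \to c$. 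I would establish this divergence from the estimate that the number of iterates of $f_{c'}^{\circ 2k}$ needed to cross from the incoming to the outgoing petal tends to infinity as the two simple periodic points coalesce into the parabolic point; each such iterate advances the Fatou coordinate by $1$, so the total horizontal displacement diverges.

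\textbf{Assembling the conclusion.} Combining the two displays, $\tilde{w}(s) = w(s) + \tau(c(s))$ modulo the identification defining $C_{c(s)}^{\mathrm{out}}$, so $\operatorname{Re}\tilde{w}(s) = \operatorname{Re} w(s) + \operatorname{Re}\tau(c(s)) \to \pm\infty$ while $\operatorname{Im}\tilde{w}(s) \to E(\sigma(0))$. Under the trivializing homeomorphism $\Theta^{\mathrm{out}}$ and the fiberwise projection to $\mathbb{C}/\mathbb{Z}$, a point whose real part runs off to infinity with bounded, convergent imaginary part projects to a curve that wraps around the cylinder $C_c^{\mathrm{out}} = \mathbb{C}/\mathbb{Z}$ infinitely often while its height tends to $E(\sigma(0))$; this is exactly the assertion that $\gamma$ spirals toward the circle at Ecalle height $E(\sigma(0))$. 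The continuity of $\Theta^{\mathrm{out}}$ on $\overline{U^-}$ guarantees that the limiting object lives in the \emph{fiber over $c$ itself}, namely $C_c^{\mathrm{out}}$, rather than drifting in the base.

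\textbf{The main obstacle} will be making the divergence $\operatorname{Re}\tau(c(s)) \to \pm\infty$ rigorous and uniform along the chosen curve $c(s)$, together with the claim that the imaginary part of $\tau$ genuinely tends to $0$ rather than merely staying bounded. Both hinge on the near-parabolic implosion estimates for the \emph{antiholomorphic} return map $f_c^{\circ k}$, whose square $f_c^{\circ 2k}$ is holomorphic and parabolic; I would invoke the perturbation theory already cited (\cite{D2,Shi,HS}) to extract the asymptotics of the transition constant, taking care that the antiholomorphic intermediate iterate is what pins down the equator and hence the $\operatorname{Im}\tau \to 0$ normalization. The remaining steps — height preservation of $T_{c'}$, continuity of coordinates up to the boundary, and the projection argument — are direct consequences of the constructions recalled above and require no further input.
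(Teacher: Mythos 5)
Your proposal is correct and follows essentially the same route as the paper: the height limit comes from exact height-preservation of the transit map together with continuity of the perturbed Fatou coordinates, and the spiraling comes from the fact that the number of fundamental domains the orbit must cross before exiting through the gate tends to infinity, which forces a continuous lift of the phase to diverge. Two minor remarks: the paper obtains the divergence of the crossing time $N_s$ by a soft argument (for each fixed $n$, $f_{c(s)}^{\circ 2kn}(\zeta(s)) \to f_c^{\circ 2kn}(\zeta(0))$, which lies in the attracting petal, so $N_s$ eventually exceeds $n$) rather than by the quantitative implosion asymptotics you propose to import, and the ``main obstacle'' you flag concerning $\Im(\tau(c')) \to 0$ is vacuous, since the transit map preserves Ecalle heights exactly, so $\Im(\tau(c')) = 0$ identically --- as your own first paragraph already establishes.
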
  

Before giving a formal proof of the lemma, let us explain its intuitive meaning. We define the \emph{phase} to be a continuous lift $\tilde{\phi}:(0,\delta] \to \mathbb{R}$ of
\begin{align*}
\phi : \left(0,\delta\right] \rightarrow \mathbb{R}/\mathbb{Z}\\
s \mapsto \Re(\pi_2(\Theta^{\mathrm{out}}(\gamma(s)))).
\end{align*}
As $s$ tends to $0$, i.e.\ as we march closer to the parabolic parameter $c$, $\zeta(s) \in V^\textrm{in}_{c(s)}$ in the dynamical plane of $c(s)$ takes larger and larger number of iterates to pass through the gate. In other words, the number of fundamental domains that $\zeta(s)$ has to cross in order to escape through the gate tends to $+\infty$, as $s$ tends to $0$. This implies that the \emph{phase} $\tilde{\phi}(s)$ goes to $-\infty$ as $s$ tends to $0$, and the image of $\gamma$ accumulates exactly on the circle on $C^{\textrm{out}}_c$
at Ecalle height $E(\sigma(0))$.

\begin{proof}
We first need to choose a continuous lift $\tilde{\phi} : \left(0,\delta\right]\rightarrow \mathbb{R}$ of $\phi$. For $s \in \left(0,\delta\right]$, let $N_s$ be the least integer such that $\alpha(s):= f_{c(s)}^{\circ 2kN_s}(\zeta(s))$ lies in $V_{c(s)}^{\mathrm{out}}$, and satisfies $\Re(\psi_{c(s)}^{\mathrm{out}}(\alpha(s)))\geq 0$. For any fixed $n$, $f_{c(s)}^{\circ 2kn}(\zeta(s)) \to f_c^{\circ 2kn}(\zeta(0))$ as $s\to 0$. But in the dynamical plane of $f_c$, the forward orbit of $\zeta(0)$ under $f_c^{\circ 2k}$ forever lives in the attracting petal $V_{c(0)}^{\mathrm{in}}$. Since $\alpha(s)\notin V^\mathrm{in}_{c(s)}$, this implies that $N_s \to +\infty$ as $s\to 0$.

We can now choose $\tilde{\phi}$ to be 
\begin{align*}
\left(0,\delta\right] \ni s \mapsto \Re(\psi_{c(s)}^{\mathrm{out}}(\alpha(s)))-N_s.
\end{align*}
In fact, $s\mapsto \Re(\psi_{c(s)}^{\mathrm{out}}(\alpha(s)))$ has infinitely many jump discontinuities, but subtracting $N_s$ makes it continuous. 

By our normalization of Fatou coordinates, $\Re(\psi_{c(s)}^{\mathrm{out}}(\alpha(s))) \in \left[0,1\right)$. It now follows that $\tilde{\phi}(s) \to -\infty$ as $s\to 0$.

The fact that the image of $\gamma$ accumulates exactly on the circle on $C^{\textrm{out}}_c$ at Ecalle height $E(\sigma(0))$ follows from the continuity of the Fatou coordinates (in particular, from the continuity of Ecalle height) and the fact that the transit map preserves Ecalle heights.
\end{proof}

We will conclude this section with an analysis of some special properties of horn maps in the antiholomorphic setting. Once again, we exploit the symmetry between the upper and lower ends of the Ecalle cylinders provided by the antiholomorphic return map. For the sake of completeness, we include the basic definitions and properties of horn maps. More comprehensive accounts on these ideas can be found in \cite[\S 2]{BE}.

The characteristic parabolic point $z_c$ (say) of $f_c$ has exactly two petals, one attracting and one repelling (denoted by $\mathcal{P}_{\textrm{att}}$ and $\mathcal{P}_{\textrm{rep}}$ respectively). The intersection of the two petals has two connected components. We denote by $\mathcal{U}^+$ the connected component of $\mathcal{P}_{\textrm{att}} \cap \mathcal{P}_{\textrm{rep}}$ whose image under the Fatou coordinates is contained in the upper half-plane, and by $\mathcal{U}^-$ the one whose image under the Fatou coordinates is contained in the lower half-plane. We define the ``sepals" $\mathcal{S}^{\pm}$ by
\begin{align*}
\displaystyle \mathcal{S}^{\pm} = \bigcup_{n \in \mathbb{Z}} f_c^{\circ 2nk} (\mathcal{U}^{\pm})
\end{align*}
Note that each sepal contains a connected component of the intersection of the attracting and the repelling petals, and they are invariant under the first holomorphic return map of the parabolic point. The attracting Fatou coordinate $\psi_{\textrm{att}}$ (respectively the repelling Fatou coordinate $\psi_{\textrm{rep}}$) can be extended to $\mathcal{P}_{\textrm{att}} \cup \mathcal{S}^+ \cup \mathcal{S}^-$ (respectively to $\mathcal{P}_{\textrm{rep}} \cup \mathcal{S}^+ \cup \mathcal{S}^-$) such that they conjugate the first holomorphic return map to the translation $\zeta \mapsto \zeta+1$.

\begin{definition}[Lifted horn maps]
Let us define $V^- = \psi_{\textrm{rep}}(\mathcal{S}^-)$, $V^+ = \psi_{\textrm{rep}}(\mathcal{S}^+)$, $W^- = \psi_{\textrm{att}}(\mathcal{S}^-)$ and $W^+ =\psi_{\textrm{att}}(\mathcal{S}^+)$. Then, denote by $H^-_c : V^- \rightarrow W^-$ the restriction of $ \psi_{\textrm{att}} \circ \psi_{\textrm{rep}}^{-1}$ to $V^-$ and by $H^+_c : V^+ \rightarrow W^+$ the restriction of $\psi_{\textrm{att}} \circ \psi_{\textrm{rep}}^{-1}$ to $V^+$. We refer to $H^{\pm}_c$ as lifted horn maps for $f_c$ at $z_c$.
\end{definition}

The regions $V^{\pm}$ and $W^{\pm}$ are invariant under translation by $1$. Moreover, the asymptotic development of the Fatou coordinates implies that the regions $V^+$ and $W^+$ contain an upper half-plane, whereas the regions $V^-$ and $W^-$ contain a lower half-plane. Consequently, under the projection $\pi : \zeta \mapsto w = \exp(2i\pi \zeta)$, the regions $V^+$ and $W^+$ project to punctured neighborhoods $\mathcal{V}^+$ and $\mathcal{W}^+$ of $0$, whereas $V^-$ and $W^-$ project to punctured neighborhoods $\mathcal{V}^-$ and $\mathcal{W}^-$ of $\infty$. 

The lifted horn maps $H^{\pm}_c$ satisfy $H_c^{\pm}(\zeta + 1) = H_c^{\pm}(\zeta) + 1$ on $V^{\pm}$.  Thus, they project to mappings $h_c^{\pm} : \mathcal{V}^{\pm} \rightarrow \mathcal{W}^{\pm}$ such that the following diagram commutes:

\begin{center}
$\begin{CD}
V^{\pm} @>H_c^{\pm}>> W^{\pm}\\
@VV\pi V @VV\pi V\\
\mathcal{V}^{\pm} @>h_c^{\pm}>> \mathcal{W}^{\pm}
\end{CD}$
\end{center}

\begin{definition}[Horn Maps]
The maps $h_c^{\pm}$ are called horn maps for $f_c$ at $z_c$.
\end{definition}

It is well-known that $\exists$ $\eta_c, \eta_c' \in \mathbb{C}$ such that $H^+_c(\zeta) \approx \zeta + \eta_c$ when $\Im(\zeta) \rightarrow +\infty$, and $H^-_c(\zeta) \approx \zeta + \eta_c'$ when $\Im(\zeta) \rightarrow -\infty$. This proves that $h_c^+ (w) \rightarrow 0$ as $w \rightarrow 0$. Thus, the horn map $h_c^+$ extends analytically to $0$ by $h^+_c (0) = 0$. One can show similarly that the horn map $h_c^-$ extends analytically to $\infty$ by $h_c^- (\infty) = \infty$. Observe that the constants $\eta_c$ and $\eta_{c'}$ are, in general, not well-defined as they depend on particular normalizations of the Fatou coordinates. However, in the antiholomorphic situation, we can and will choose the normalizations of Fatou coordinates described in Lemma \ref{normalization of fatou}, and these Fatou coordinates conjugate the first (antiholomorphic) return map in both petals to $\zeta \mapsto \overline{\zeta}+1/2$. This choice involves an adjustment of the vertical degree of freedom of the Fatou coordinates. Consequently, the two lifted horn maps $H^+_c$ and $H^-_c$ are conjugated to each other by $\zeta \mapsto \overline{\zeta}+\frac{1}{2}$. It follows that $\eta_c' = \overline{\eta_c}$.  Note that with the chosen normalizations of the Fatou coordinates, the imaginary parts of $\eta_c$ and $\eta_c'$ (which are the asymptotic vertical translation constants of the lifted horn maps) become well-defined real numbers. We will study the asymptotic behavior of $\eta_c$ as $c$ tends to the ends of the parabolic arc $\mathcal{C}$.

\begin{lemma}\label{asymptotics of horn}
$\Im(\eta_c) \rightarrow +\infty$ as $c$ tends to the ends of the parabolic arc $\mathcal{C}$.
\end{lemma}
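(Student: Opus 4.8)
The plan is to translate the statement about the asymptotic translation constant $\eta_c$ into a statement about the multiplier of the horn map at its parabolic fixed point, and then to control that multiplier using the Ecalle height of the critical value together with the Koebe distortion theorem. The elementary reduction comes first: since $H^+_c(\zeta)\approx\zeta+\eta_c$ as $\Im\zeta\to+\infty$, the projected horn map satisfies $h^+_c(w)=\lambda^+_c\, w+O(w^2)$ near its fixed point $0$, where $\lambda^+_c=\exp(2\pi i\eta_c)$, so that $|\lambda^+_c|=\exp(-2\pi\Im(\eta_c))$. Thus $\Im(\eta_c)\to+\infty$ is \emph{equivalent} to $\lambda^+_c\to0$. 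Parametrizing the arc $\mathcal{C}$ by the Ecalle height $t$ of the critical value as in Theorem \ref{parabolic arcs}, the two ends of $\mathcal{C}$ correspond to $t\to+\infty$ and $t\to-\infty$. For a fixed parameter, the conjugacy $\zeta\mapsto\overline{\zeta}+\tfrac12$ relating $H^+_c$ and $H^-_c$ gives $\eta_c'=\overline{\eta_c}$, so the lower horn map $h^-_c$ has multiplier of the same modulus $\exp(-2\pi\Im(\eta_c))$ at its fixed point $\infty$. Consequently, at the end $t\to-\infty$ the critical value sits near the fixed point $\infty$ of $h^-_c$, and the identical estimate applied to $h^-_c$ will again yield $\Im(\eta_c)\to+\infty$; hence it suffices to treat the end $t\to+\infty$ and to show that the multiplier of $h^+_c$ at $0$ tends to $0$.

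Next I would locate the critical value of $h^+_c$. The only critical point of $f_c$ is $0$, and its critical value $c$ lies in the attracting petal at Ecalle height $t$; projecting by $w=\exp(2\pi i\zeta)$ places the corresponding critical value $v_c$ of $h^+_c$ at Ecalle height $t$, i.e.\ $|v_c|=\exp(-2\pi t)$, so $v_c\to0$ as $t\to+\infty$. Let $w_c$ be a critical point of $h^+_c$ nearest to $0$, with $h^+_c(w_c)=v_c$, and set $r_c=|w_c|$. On the disk $\{|w|<r_c\}$ the map $h^+_c$ is univalent with $h^+_c(0)=0$ and $(h^+_c)'(0)=\lambda^+_c$, and $v_c\in\partial\, h^+_c(\{|w|<r_c\})$. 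The Koebe one-quarter theorem then gives $h^+_c(\{|w|<r_c\})\supset\{|w|<\tfrac14 r_c|\lambda^+_c|\}$, whence $|v_c|\ge\tfrac14 r_c|\lambda^+_c|$. Combined with $|v_c|=\exp(-2\pi t)$ this yields
\[
\Im(\eta_c)\ \ge\ t-\frac{1}{2\pi}\log\frac{4}{r_c}.
\]
Provided the critical point $w_c$ stays a definite distance away from $0$, i.e.\ $r_c\ge\epsilon_0>0$ uniformly as $c$ runs to the end of $\mathcal{C}$, this forces $\Im(\eta_c)\ge t-O(1)\to+\infty$, as desired; the end $t\to-\infty$ follows by applying the same estimate to $h^-_c$ and invoking $\eta_c'=\overline{\eta_c}$.

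The main obstacle is precisely the uniform lower bound $r_c\ge\epsilon_0>0$: one must show that, although the critical \emph{value} $v_c$ of the horn map races into the puncture $0$ as $t\to+\infty$, the critical \emph{point} $w_c$ does not, i.e.\ the critical point of $h^+_c$ has uniformly bounded repelling Ecalle height. Conceptually this is where the cusp enters: the ends of $\mathcal{C}$ are cusps, at which the characteristic parabolic point degenerates ($q=2$ in Lemma \ref{LemOddIndiffDyn}), and it is this degeneration that prevents the critical point of the horn map from following its critical value into the fixed point. I would establish the bound by a normal-families argument: if $r_{c_n}\to0$ along some $c_n\to c_\ast$ (a cusp), the normalized Fatou coordinates and horn maps $h^+_{c_n}$ converge, and one extracts a limiting horn map whose critical point coincides with its parabolic fixed point, contradicting the local normal form of $f_{c_\ast}$ at the cusp. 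In practice I expect to derive the bound from the univalence of $h^+_{c}$ on sepals of uniformly bounded size, which in turn follows from the continuity of the perturbed Fatou coordinates recorded before Lemma \ref{spiral}. Making this compactness step rigorous — in particular, ruling out that the sepal on which $h^+_c$ is univalent shrinks as fast as $v_c\to0$ — is the technical heart of the proof.
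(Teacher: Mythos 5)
Your reduction is fine as far as it goes: $|\lambda^+_c|=|(h^+_c)'(0)|=\exp(-2\pi\Im(\eta_c))$, the arc is parametrized by critical Ecalle height $t$ with the ends at $t\to\pm\infty$ (Theorem \ref{parabolic arcs}), and the critical value of the (suitably extended) horn map has modulus $e^{-2\pi t}$. But the argument has a genuine gap exactly where you flag it, and the repair you propose cannot work. The ends of $\mathcal{C}$ are cusps, where $q=2$ in Lemma \ref{LemOddIndiffDyn}: the limiting map has a degenerate parabolic point with two attracting petals, so the attracting/repelling Fatou coordinates and horn maps of $f_{c_n}$ do \emph{not} converge to corresponding objects for the cusp parameter, and there is no ``limiting horn map'' of the same type from which to extract a contradiction; the blow-up of these objects at the cusp is precisely what the lemma quantifies. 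Worse, the needed bound is essentially circular: your own inequality gives $r_c\le 4|v_c|/|\lambda^+_c|=4e^{2\pi(\Im(\eta_c)-t)}$, so deciding whether $r_c$ stays bounded below amounts to knowing that $\Im(\eta_c)$ grows at least like $t-O(1)$ --- a statement stronger than the lemma itself, and one for which you have no independent input.

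There are also flaws in the Koebe step itself. Absence of critical points in $\{|w|<r_c\}$ does not make $h^+_c$ univalent there (injectivity can fail without critical points), and even granting univalence on the open disk, $v_c=h^+_c(w_c)$ with $|w_c|=r_c$ need not lie outside $h^+_c(\{|w|<r_c\})$: injectivity on the open disk does not prevent an interior point from hitting $v_c$, so $|v_c|\ge\tfrac14 r_c|\lambda^+_c|$ does not follow. (The correct versions of such arguments go through inverse branches and singular values, and when run that way they produce a \emph{lower} bound on $|\lambda^+_c|$, the wrong direction.) Note also that in this paper the horn maps are defined only on the sepal projections, where they are univalent and have no critical points; your argument needs the extension of $\psi_{\textrm{att}}$ to the whole basin, which must be set up. The paper's proof bypasses all of this geometry: by the antiholomorphic symmetry $\eta_c'=\overline{\eta_c}$ one has $(h^+_c)'(0)\,(h^-_c)'(\infty)=e^{-4\pi\Im(\eta_c)}$, the Buff--Epstein index formula \cite{BE} identifies this product with $\exp(4\pi^2(1-\iota_c))$, and $\iota_c\to+\infty$ at the ends of the arc by \cite{HS}. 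This yields the exact relation $\Im(\eta_c)=\pi(\iota_c-1)$, which is precisely the quantitative information your approach lacks.
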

\begin{proof}
It follows from the symmetry of the two lifted horn maps that the two horn maps $h^+_c$ and $h^-_c$ which are defined respectively in neighborhoods of $0$ and of $\infty$ are conjugated by $w \mapsto -1/\overline{w}$, and they asymptotically look like $w \mapsto \exp(2 \pi i \eta_c) w$ and  $w \mapsto \exp(2\pi i \overline{\eta_c}) w$ respectively. Clearly, $(h^+_c)'(0) =  \exp(2\pi i \eta_c)$ and $(h^-_c)'(\infty) =  \exp(-2\pi i \overline{\eta_c})$. By \cite[Proposition 1]{BE}, $\displaystyle$ $\exp(-4 \pi \Im(\eta_c)) = (h^+_c)'(0) (h^-_c)'(\infty) = \exp(4\pi^2 (1-\iota_c))$, where $\iota_c$ is the holomorphic fixed-point index of $f_c^{\circ 2k}$ at the parabolic fixed point $z_c$. Towards the ends of a parabolic arc, the fixed-point index $\iota_c$ at the characteristic parabolic point tends to $+\infty$ in $\mathbb{R}$ \cite[Proposition 3.7]{HS}. Hence, $\Im(\eta_c) \rightarrow +\infty$ towards the ends of a parabolic arc. 
\end{proof}
\section{Wiggling of Parameter Rays}\label{wiggle}
The goal of this section is to prove Theorem \ref{most rays wiggle}. We begin with a couple of definitions.

\begin{definition}[Accumulation Set of a Ray]
The accumulation set of a parameter ray $\mathcal{R}_{\theta}^d$ of $\mathcal{M}_d^*$ is defined as $L_{\mathcal{M}_d^*}(\theta) := \overline{\mathcal{R}_{\theta}^d} \bigcap \mathcal{M}_d^*$.
\end{definition}

\begin{definition}[Rational Lamination]
The rational lamination of an antiholomorphic polynomial $f_c$ (with connected Julia set) is defined as an equivalence relation on $\mathbb{Q}/\mathbb{Z}$ such that $\theta_1 \sim \theta_2$ if and only if the dynamical rays $R_c(\theta_1)$ and $R_c(\theta_2)$ land at the same point of $J(f_c)$. It is denoted by $\mathcal{RL}(f_c)$.
\end{definition}

\begin{lemma}\label{fixed angles land}
The parameter rays $\{ \mathcal{R}_{t}^d : t = 0, \frac{1}{d+1}, \frac{2}{d+1}, \cdots, \frac{d}{d+1} \}$ of $\mathcal{M}_d^{\ast}$ land.
\end{lemma}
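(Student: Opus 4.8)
The plan is to exploit the two symmetries of $\mathcal{M}_d^*$---the $(d+1)$-fold rotational symmetry and the symmetry under complex conjugation---both of which are visible through the uniformization $\Phi$ of Theorem \ref{RealAnalUniformization}, and to reduce everything to the single angle $\theta=0$. The first step is to record how $\Phi$ intertwines with these symmetries. Writing $\omega=e^{2\pi i/(d+1)}$, the linear conjugacy $z\mapsto\omega z$ carries $f_c$ to $f_{\omega c}$ (this works precisely because $\omega=\overline{\omega}^{\,d}$ for this choice of root of unity), and tracking it through the Böttcher coordinate normalized by $\phi_c\bigl(f_c(z)\bigr)=\overline{\phi_c(z)}^{\,d}$ yields $\phi_{\omega c}(w)=\omega\,\phi_c(w/\omega)$, hence $\Phi(\omega c)=\omega\,\Phi(c)$. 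Similarly $z\mapsto\overline{z}$ conjugates $f_c$ to $f_{\overline{c}}$, giving $\phi_{\overline{c}}(w)=\overline{\phi_c(\overline{w})}$ and therefore $\Phi(\overline{c})=\overline{\Phi(c)}$. Consequently the inverse diffeomorphism satisfies $\Phi^{-1}(\omega w)=\omega\,\Phi^{-1}(w)$ and $\Phi^{-1}(\overline{w})=\overline{\Phi^{-1}(w)}$.

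With these equivariances in hand I would first treat the angle $\theta=0$. Applying the conjugation relation to real radii $r>1$ gives $\Phi^{-1}(r)=\Phi^{-1}(\overline{r})=\overline{\Phi^{-1}(r)}$, so the entire ray $\mathcal{R}_0^d=\{\Phi^{-1}(r):r>1\}$ lies on the real axis. Now $\Phi^{-1}$ is a homeomorphism, so its restriction to $(1,\infty)$ is a continuous injection whose image is a connected subset of $\mathbb{R}$, i.e.\ an interval; a continuous injection of an interval into $\mathbb{R}$ is strictly monotone. Since $\Phi$ is tangent to the identity at $\infty$, we have $\Phi^{-1}(r)\to+\infty$ as $r\to+\infty$, which forces this monotonicity to be increasing; hence $\lim_{r\downarrow 1}\Phi^{-1}(r)$ exists and equals the infimum of the image. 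This existence of the radial limit is exactly the statement that $\mathcal{R}_0^d$ lands.

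Finally, for $\theta=\tfrac{j}{d+1}$ with $1\le j\le d$, the rotational equivariance gives $\Phi^{-1}\bigl(r\,e^{2\pi i j/(d+1)}\bigr)=\Phi^{-1}(\omega^j r)=\omega^j\,\Phi^{-1}(r)$, which as $r\downarrow 1$ converges to $\omega^j$ times the landing point of $\mathcal{R}_0^d$; thus $\mathcal{R}_{j/(d+1)}^d$ lands as well, and one expects its landing point to be the corresponding one of the $d+1$ cusps of the period-one component. The topological ingredients (connectedness of the image and monotonicity of a continuous injection) are routine. I expect the only point demanding genuine care to be the verification of the equivariance formulas for $\Phi$: one must check that the linear conjugacies $z\mapsto\omega z$ and $z\mapsto\overline{z}$ act on the normalized Böttcher coordinates exactly as claimed, so that the antiholomorphic normalization $\phi_c\bigl(f_c(z)\bigr)=\overline{\phi_c(z)}^{\,d}$ is preserved and the tangent-to-identity condition at $\infty$ is respected; everything else then follows from monotonicity and symmetry alone, with no appeal to the finer parabolic structure.
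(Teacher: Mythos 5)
Your proposal is correct and follows essentially the same route as the paper: rotational equivariance of $\Phi$ reduces all the angles to $\theta=0$, the conjugation symmetry forces $\mathcal{R}_0^d\subset\mathbb{R}$, and landing follows for the real ray (the paper simply asserts $\mathcal{R}_0^d\subset\mathbb{R}$ and that a real ray lands, whereas you supply the justification via $\Phi(\overline{c})=\overline{\Phi(c)}$ and monotonicity). One correction to your closing aside, which does not affect the proof: the landing point is not a cusp but the Ecalle-height-zero (non-cusp) parameter on a period-one parabolic arc --- for $j=0$ it is the real parameter $c_d=d^{d/(1-d)}(d-1)$ appearing in the paper's proof of Theorem \ref{period 1 multicorns}.
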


\begin{proof}
Let, $\omega = \exp(\frac{2\pi i}{d+1})$. The antiholomorphic polynomials $f_c$ and $f_{\omega c}$ are conformally conjugate via the linear map $z \mapsto  \omega z$. It follows that $\mathcal{M}_d^{\ast}$ has a $(d+1)$-fold rotational symmetry, and $f_c \sim f_{\omega c} \sim f_{\omega^2 c} \sim \cdots \sim f_{\omega^d c}$.  Also, $K(f_{\omega^j c}) = \omega^j  K(f_c)$, and $J(f_{\omega^j c}) = \omega^j J(f_c)$. The B\"{o}ttcher maps are related by $\omega^j \phi_c(z) = \phi_{\omega^j c}(\omega^j z)$. This reads, in terms of external rays, as $\omega^j R_c(\theta) = R_{\omega^j c} \left( \theta + \frac{j}{d+1} \right)$.

The map $\Phi : \mathbb{C} \setminus \mathcal{M}^{\ast}_d \rightarrow \mathbb{C} \setminus \overline{\mathbb{D}}$, defined by $c \mapsto \phi_c(c)$ (where $\phi_c$ is the B\"{o}ttcher coordinate near $\infty$) is a real-analytic diffeomorphism. This map defines the parameter rays of the multicorns. It follows that $\Phi(\omega^j c) = \omega^j \Phi(c)$.

Now, 
\begin{align*}
 \mathcal{R}^d_0 & = \{ c \in \mathbb{C} : \Phi(c) = r, r>1 \} \\
 &= \{ c \in \mathbb{C} : \frac{1}{\omega^j}\Phi(\omega^j c) = r, r>1 \} \\
 &= \{ c \in \mathbb{C} : \Phi(\omega^j c) = r \exp\left(\frac{2\pi i j}{d+1}\right), r>1 \} \\
 &= \{ c \in \mathbb{C} : \omega^j c \in \mathcal{R}_{\frac{ j}{d+1}}^d \} \\
 &= \frac{1}{\omega^j} \mathcal{R}_{\frac{ j}{d+1}}^d.
\end{align*}

Hence, $\omega^j \mathcal{R}^d_0 = \mathcal{R}_{\frac{ j}{d+1}}^d$. Since $\mathcal{R}^d_0$ is a subset of $\mathbb{R}$, it lands. It follows that $\mathcal{R}_{\frac{ j}{d+1}}^d$, being the image of $\mathcal{R}^d_0$ under a rotation, must land as well.
\end{proof}

Having taken care of the parameter rays at fixed angles, we now turn our attention to the parameter rays that accumulate on the boundaries of hyperbolic components of odd period \emph{greater than} $1$. The proof of Theorem \ref{most rays wiggle} is carried out in various steps. Let us sketch the key ideas of the proof to stop the readers from getting lost in the technicalities. We will stick to the terminologies of Section \ref{Secbackground}.

Let $t \in S \cup S'$, and $\mathcal{C}$ be the parabolic arc where the parameter ray $\mathcal{R}^d_t$ accumulates. For every parameter on $\mathcal{C}$, the dynamical ray at angle $t$ lands at the characteristic parabolic point through the unique repelling petal. We first show that if for some $c \in \mathcal{C}$, the dynamical ray $R_c(t)$ projects to a horizontal line under the repelling Fatou coordinate, then the rational lamination of $f_c$ must be invariant under a certain affine transformation. This is achieved by considering a pair of dynamically meaningful involutions in the repelling petal. A simple combinatorial exercise then shows that such invariant laminations can never exist when the period of $H$ is greater than $1$. This proves that for every parameter on $\mathcal{C}$, the projection of the dynamical ray $R_c(t)$ under the repelling Fatou coordinate must traverse a non-degenerate interval of Ecalle heights. The final part of the proof involves a careful parabolic perturbation argument which allows us to transfer the variation of Ecalle heights of the dynamical rays at angle $t$ to the wiggling of the corresponding ray in the parameter plane.  

We denote the repelling Fatou coordinate at the characteristic parabolic point of $f_c$ by $\psi_{\textrm{rep}} : \mathcal{P}_{\textrm{rep}} \rightarrow \mathbb{H}_\textrm{Left}$, and the B\"{o}ttcher coordinate by $\phi_{c} : \mathcal{A}_{\infty}(f_{c}) \rightarrow \hat{\mathbb{C}}\setminus \overline{\mathbb{D}}$.

\begin{lemma}[Invariance of Lamination]\label{horizontal implies invariance of lamination}
Let $c \in \mathcal{C}$, where $\mathcal{C}$ is a parabolic arc on the boundary of $H$. Suppose that the dynamical ray $R_c(t)$ projects to a horizontal line under the repelling Fatou coordinate. Then the rational lamination  $\mathcal{RL} \left( f_{c} \right)$ is invariant under the transformation $s \mapsto 2t - s$. 
\end{lemma}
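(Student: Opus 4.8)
The plan is to promote the hypothesis into an honest reflection symmetry of the Julia set. Let $\rho$ denote the affine involution $s\mapsto 2t-s$ of $\mathbb{R}/\mathbb{Z}$. Once we produce a homeomorphism $\beta$ of $J(f_c)$ that carries each dynamical ray $R_c(s)$ to $R_c(\rho(s))$, the conclusion is immediate: if $R_c(s_1)$ and $R_c(s_2)$ land together, then so do their $\beta$-images $R_c(2t-s_1)$ and $R_c(2t-s_2)$, which is exactly the asserted invariance of $\mathcal{RL}(f_c)$. The natural candidate for $\beta$ is the B\"{o}ttcher reflection $\beta_0:=\phi_c^{-1}\circ(w\mapsto e^{4\pi i t}\bar w)\circ\phi_c$, defined a priori only on the exterior $\hat{\mathbb{C}}\setminus K(f_c)$. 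A direct computation shows $\beta_0$ sends $R_c(s)$ to $R_c(2t-s)$ and fixes $R_c(t)$ pointwise; moreover, since $t$ is $2k$-periodic under $s\mapsto -ds$ we have $(d^{2k}-1)t\in\mathbb{Z}$, so $w\mapsto e^{4\pi i t}\bar w$ commutes with $w\mapsto w^{d^{2k}}$, i.e.\ $\beta_0$ commutes with the holomorphic return map $f_c^{\circ 2k}$ throughout the exterior. The entire problem thus reduces to showing that $\beta_0$ extends continuously across $J(f_c)$.

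Here is where the pair of involutions in the repelling petal enters. By hypothesis $\psi_{\mathrm{rep}}(R_c(t)\cap\mathcal{P}_{\mathrm{rep}})$ is a horizontal line, say at Ecalle height $h$; I would consider the antiholomorphic reflection $\iota_2\colon\zeta\mapsto\bar\zeta+2ih$ about this line, alongside the reflection $\iota_1\colon\zeta\mapsto\bar\zeta+\frac{1}{2}$ about the equator that is induced by the antiholomorphic return map $f_c^{\circ k}$. Both are antiholomorphic involutions of the cylinder commuting with $\zeta\mapsto\zeta+1$; pulling $\iota_2$ back by $\psi_{\mathrm{rep}}$ produces an antiholomorphic involution $\tau_2$ of $\mathcal{P}_{\mathrm{rep}}$ which commutes with $f_c^{\circ 2k}$ and fixes the terminal arc $R_c(t)\cap\mathcal{P}_{\mathrm{rep}}$ pointwise. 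The horizontal-line hypothesis is used exactly once, to make this local reflection fix the ray.

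The gluing step is then the following: on the exterior portion of $\mathcal{P}_{\mathrm{rep}}$ near $z_c$ — where both $\beta_0$ and $\tau_2$ are defined — the two maps are antiholomorphic and fix the common real-analytic arc $R_c(t)$ pointwise, hence by uniqueness of Schwarz reflection about that arc they coincide. Consequently $\tau_2$ furnishes a continuous (indeed antiholomorphic) extension of $\beta_0$ across the germ of $J(f_c)$ at $z_c$, and equivariance of $\tau_2$ under $f_c^{\circ 2k}$ guarantees that this extension maps the local piece of $J$ to itself.

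Finally I would propagate the extension over all of $J(f_c)$ using the commutation $\beta_0\circ f_c^{\circ 2k}=f_c^{\circ 2k}\circ\beta_0$: the preimages of $z_c$ under $f_c^{\circ 2k}$ are dense in $J$, and transporting the local extension by the univalent inverse branches of $f_c^{\circ 2k}$ available near these preimages yields a continuous extension of $\beta_0$ to a full one-sided neighborhood of $J$, i.e.\ a self-homeomorphism $\beta$ of $\hat{\mathbb{C}}\setminus\mathrm{int}\,K(f_c)$ preserving $J$ and acting as $\rho$ on rays. This last propagation — controlling the boundary behavior uniformly in the presence of the parabolic point $z_c$ and upgrading density of preimages to genuine continuity on $J$ — is the step I expect to be the main obstacle; the preceding reduction and gluing are comparatively formal. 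With $\beta$ in hand, the invariance of $\mathcal{RL}(f_c)$ under $s\mapsto 2t-s$ follows as explained.
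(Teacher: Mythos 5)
Your steps 1--3 are, in substance, the paper's own argument: the paper introduces the same two antiholomorphic involutions --- the B\"ottcher reflection $\phi_c^{-1}\circ\iota_2\circ\phi_c$ fixing $R_c(t)$ and the petal reflection $\psi_{\mathrm{rep}}^{-1}\circ\iota_1\circ\psi_{\mathrm{rep}}$ about the horizontal line (in the paper's notation) --- and identifies them on $\mathcal{P}_{\mathrm{rep}}\cap\mathcal{A}_{\infty}(f_c)$ by a Schwarz reflection argument applied to the transition map $\psi_{\mathrm{rep}}\circ\phi_c^{-1}$; your ``uniqueness of Schwarz reflection about a common pointwise-fixed real-analytic arc'' is the same computation. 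The divergence, and the genuine gap, is your step 4.

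Step 4 as written is circular, in addition to being far harder than what is needed. To transport the local extension from $z_c$ to a preimage $w\in f_c^{-2mk}(z_c)$, you must postcompose with an inverse branch of $f_c^{\circ 2mk}$ on the image side, and to choose that branch you must already know near which preimage of $z_c$ the reflected configuration at $w$ sits --- equivalently, that the rays $R_c(2t-s)$, as $s$ runs over the angles of the rays landing at $w$, all land at one common point. That is precisely an instance of the lamination invariance you are trying to prove. Moreover, even granting coherent branch choices, upgrading ``defined near a dense set of preimages'' to ``continuous on $J(f_c)$'' requires uniform shrinking of the transported neighborhoods, in effect local connectivity of $J(f_c)$ together with expansion/distortion control along orbits passing the parabolic point (geometrically finite machinery), none of which you supply --- and, as you note, this is the main obstacle. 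The paper avoids all of this by never extending the map: it spreads the \emph{relation} rather than the homeomorphism. From your steps 1--3 one gets the local statement that the petal reflection carries any co-landing ray pair whose common landing point lies in $\mathcal{P}_{\mathrm{rep}}$ to another co-landing pair (with angles reflected by $s\mapsto 2t-s$). Then, given an arbitrary co-landing rational pair $R_c(s_1)$, $R_c(s_2)$, density of iterated preimages of the landing point in $J(f_c)$ produces a co-landing pair $R_c(s_1')$, $R_c(s_2')$ with landing point in $\mathcal{P}_{\mathrm{rep}}$ and $(-d)^{2mk}s_i'=s_i$; the local invariance makes $R_c(2t-s_1')$ and $R_c(2t-s_2')$ co-land; and pushing forward by $f_c^{\circ 2mk}$ --- which preserves co-landing by continuity and sends the angle $2t-s_i'$ to $(-d)^{2mk}(2t-s_i')\equiv 2t-s_i$, using $(-d)^{2mk}t\equiv t$ --- completes the proof. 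Replacing your step 4 by this forward propagation of the co-landing relation turns your steps 1--3 into a complete argument; as it stands, the proposal proves the easy half and leaves the crux unproved.
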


\begin{remark}
The projection of the basin of infinity onto the repelling Ecalle cylinder is a conformal annulus bounded by fractal structures (the Julia set and the decorations thereof) from above and below, and the projection of the dynamical ray is the unique simple closed geodesic (the core curve) of this conformal annulus. The core curve is a round circle if and only if the annulus is symmetric with respect to this round circle, and heuristically speaking, this is an extremely unlikely situation for a polynomial. This lemma essentially tells that such a miracle could happen only if the polynomial had some strong global symmetry.
\end{remark}

\begin{figure}[!ht]
\label{Fatou and Bottcher}
\begin{minipage}{0.48\linewidth}
\begin{center}
\includegraphics[scale=0.24]{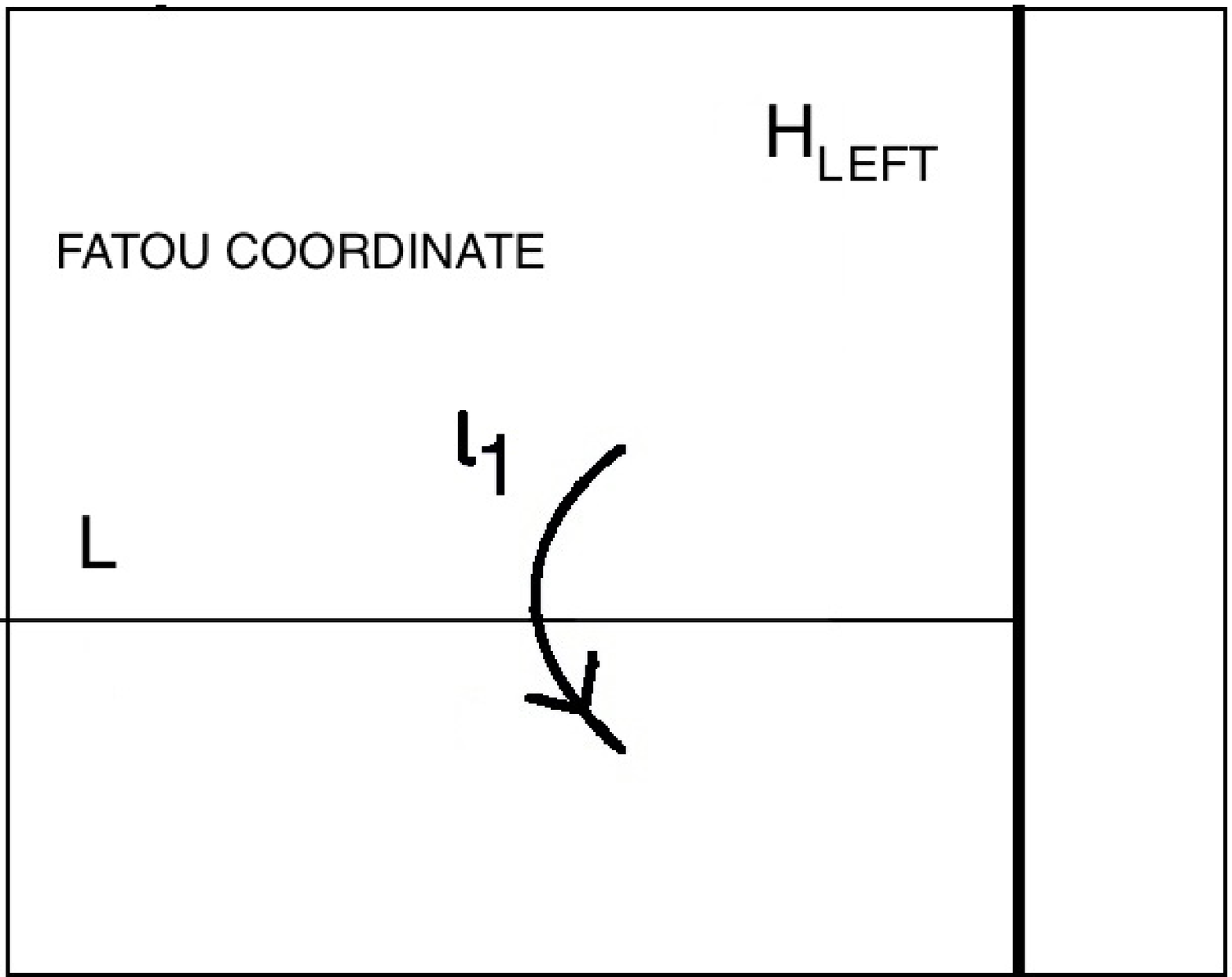}
\end{center}
\end{minipage}
\hspace{2mm}
\begin{minipage}{0.48\linewidth}
\begin{center}
\includegraphics[scale=0.26]{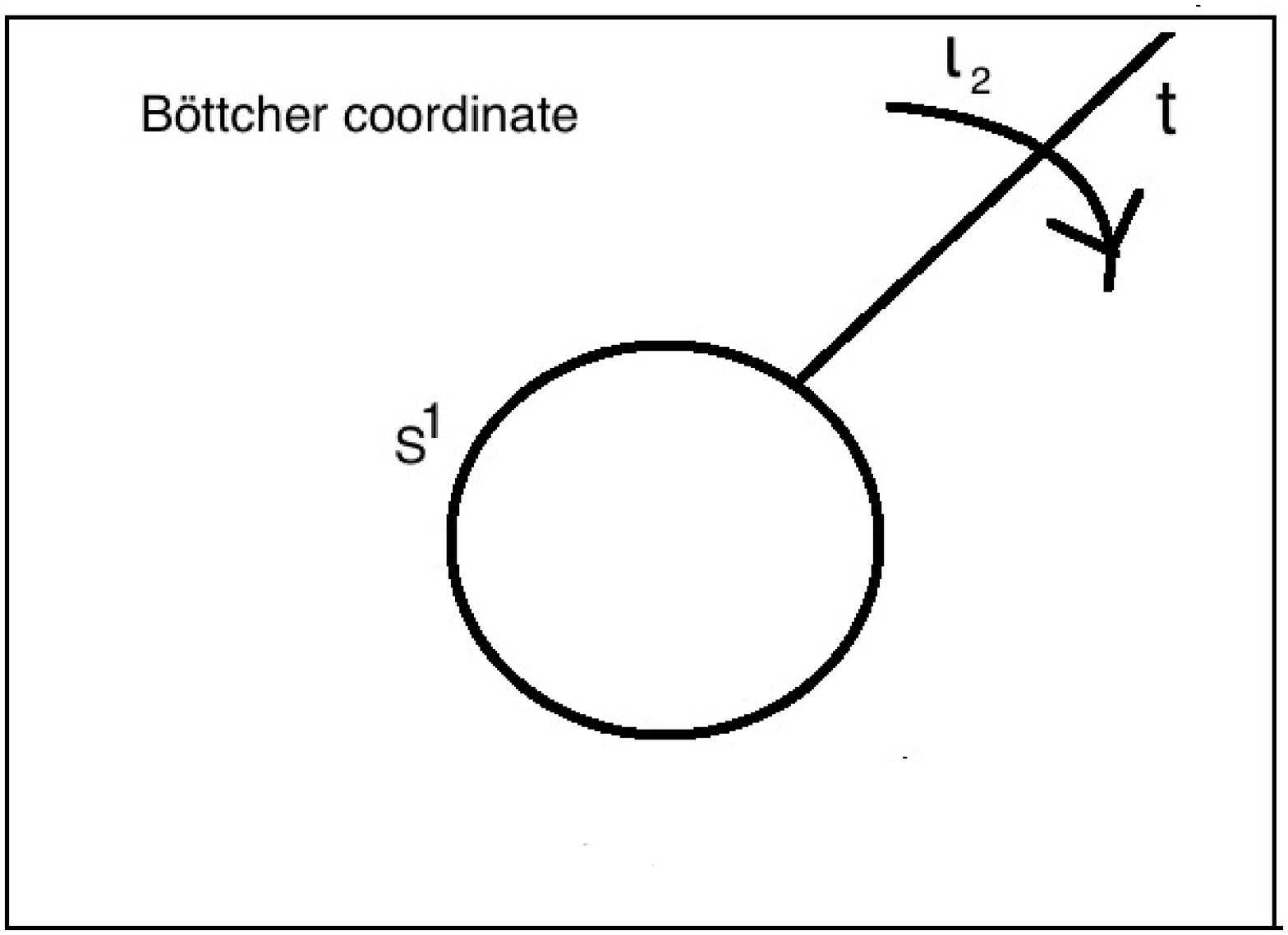}
\end{center}
\end{minipage}

\begin{minipage}{0.22\linewidth}
\begin{flushright}
\includegraphics[scale=0.15]{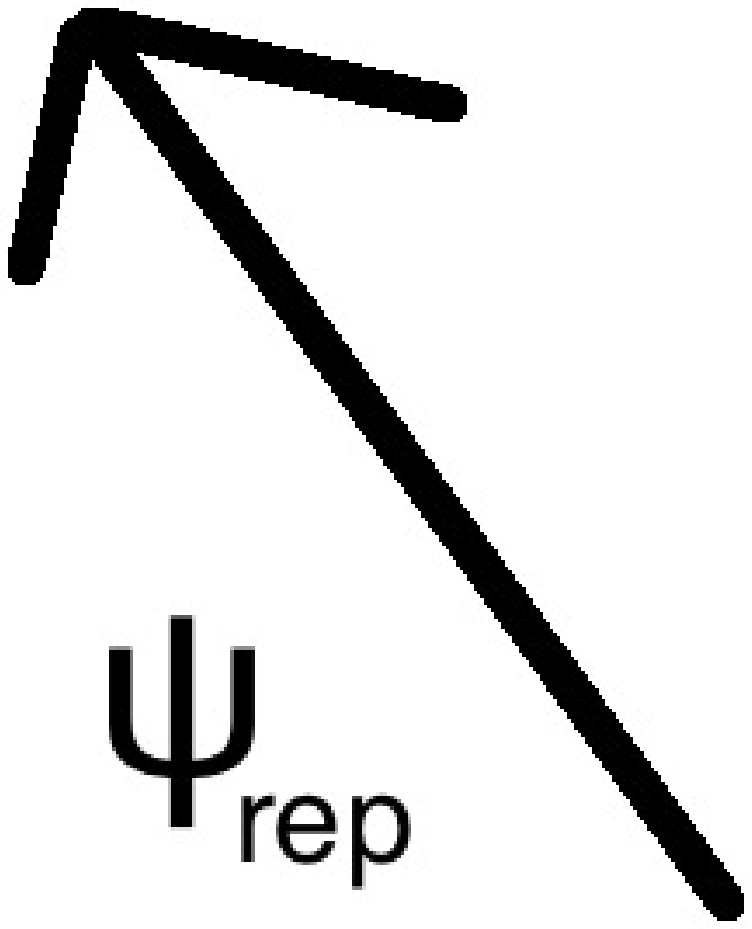}
\end{flushright}
\end{minipage}
\begin{minipage}{0.5\linewidth}
\begin{center}
\includegraphics[scale=0.25]{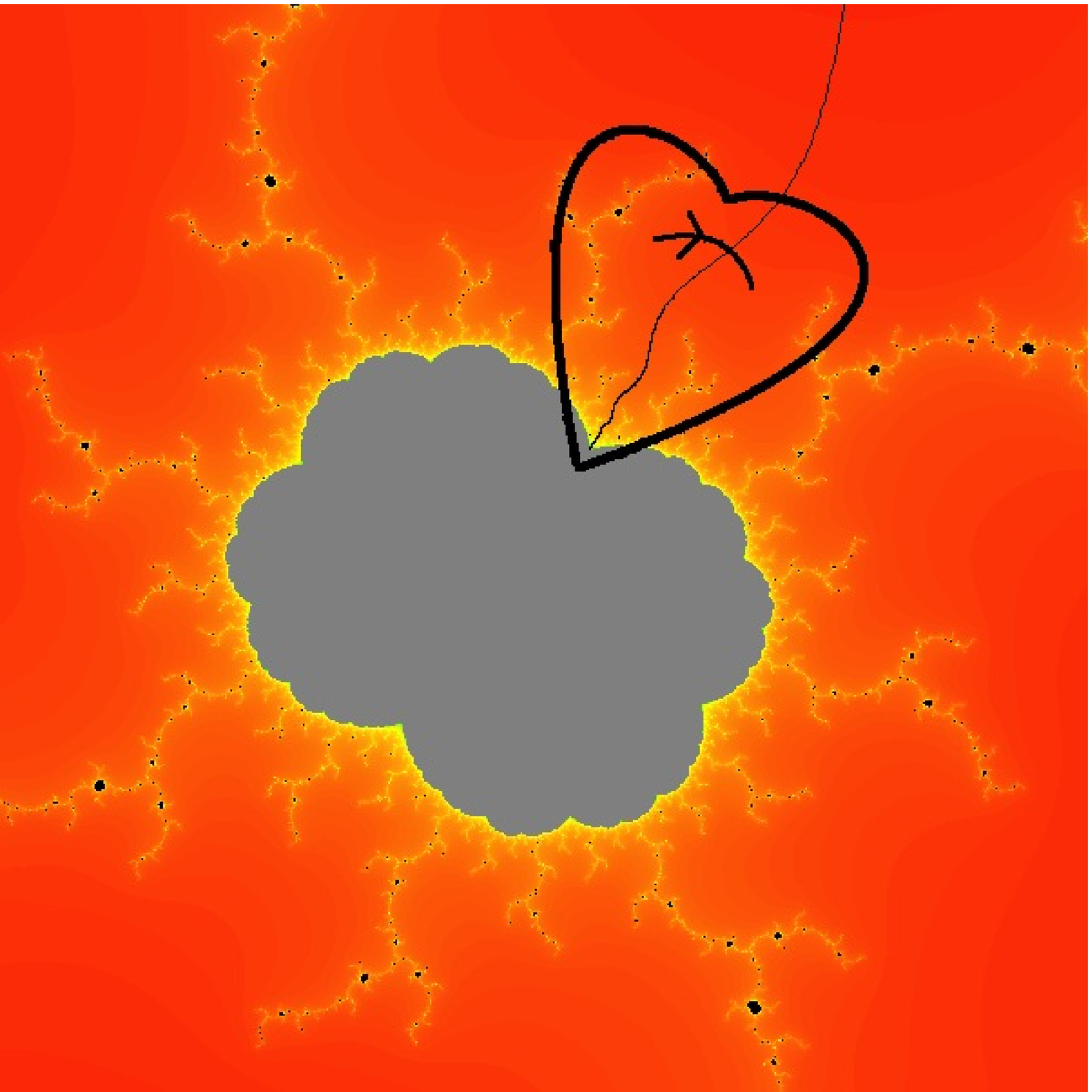}
\end{center}
\end{minipage}
\begin{minipage}{0.22\linewidth}
\includegraphics[scale=0.15]{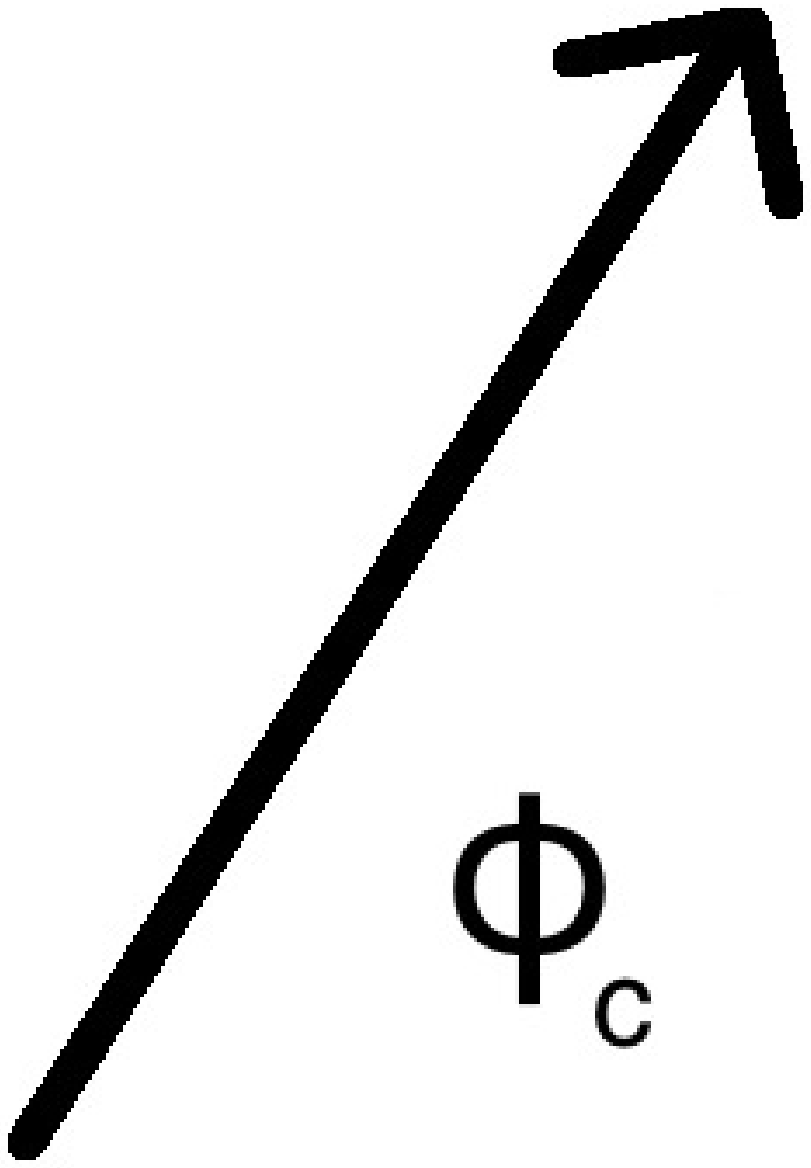}
\end{minipage}
\caption{Top: The reflections with respect to the straight line $L$ (respectively the radial line at angle $t$) defined in the left-half plane (respectively in the exterior of the closed unit disk). Bottom: These reflections transported to the dynamical plane via the Fatou (respectively the B\"{o}ttcher) coordinates agree on their common domain of definitions, namely on $\mathcal{P}_{\textrm{rep}} \cap \mathcal{A}_{\infty}(f_{c})$.}
\end{figure}

\begin{proof}
Let $L$ be the horizontal line which is the image of the dynamical ray $R_c(t)$ under the repelling Fatou coordinate. We denote the reflection in $\mathbb{H}_\textrm{Left}$ with respect to\ $L$ as $\iota_1$. This gives a local anti­holomorphic diffeomorphism $\left( \psi_{\textrm{rep}} ^{-1} \circ \iota_1 \circ \psi_{\textrm{rep}} \right)$ in the domain of definition of the repelling Fatou coordinate. On the other hand, consider the reflection in the B\"{o}ttcher coordinate with respect to the radial line at angle t (denoted by $\iota_2$). This gives an anti­holomorphic diffeomorphism $\left(\phi_c^{-1} \circ \iota_2 \circ \phi_c\right)$ in the basin of infinity $\mathcal{A}_{\infty}(f_{c})$, preserving $R_c(t)$, and mapping a dynamical ray $R_c(s)$ to $R_c(2t-s)$ (see Figure \ref{Fatou and Bottcher}). We will first show that these two diffeomorphisms agree on $\mathcal{P}_{\textrm{rep}} \cap \mathcal{A}_{\infty}(f_{c})$.

Let $\psi_{\textrm{rep}} \circ \phi_{c}^{-­1}$ be defined on a domain $D$ which we can assume to be symmetric with respect to\ the radial line at angle $t$ (under the map $\iota_2$). Since $\psi_{\textrm{rep}} \circ \phi_{c}^{-­1}$ maps the radial line at angle $t$ to a horizontal line in the left half-plane, the Schwarz reflection principle implies that $\psi_{\textrm{rep}} \circ \phi_c^{-­1} (w) = \iota_1 \circ \psi_{\textrm{rep}} \circ \phi_c^{-­1} \circ \iota_2(w) \hspace{1mm} \forall \hspace{1mm} w \in D$. This implies that  $\phi_c^{-1} \circ \iota_2 \circ \phi_c =  \psi_{\textrm{rep}}^{-1} \circ \iota_1 \circ \psi_{\textrm{rep}}$ on $\mathcal{P}_{\textrm{rep}} \cap \mathcal{A}_{\infty}(f_{c})$. Hence, the local antiholomorphic diffeomorphism $\left( \psi_{\textrm{rep}}^{-1} \circ \iota_1 \circ \psi_{\textrm{rep}} \right)$ in the repelling petal maps a co-landing ray pair to another co-landing ray pair. It follows that for rational angles $s_1$ and $s_2$, if $R_c(t+s_1)$ and $R_c(t+s_2)$ land at the same point close to the characteristic parabolic point, then so do $R_c(t-s_1)$ and $R_c(t-s_2)$. This proves the local invariance of the rational lamination under the map $s \mapsto 2t-s$. 

We now spread this local invariance to the entire rational lamination. Let the rational dynamical rays $R_c(s_1)$ and $R_c(s_2)$ co-land. By the density of iterated pre-images in the Julia set, there exists a co-landing rational ray pair $R_c(s'_1)$ and $R_c(s'_2)$ such that their common landing point lies in $\mathcal{P}_{\textrm{rep}}$, and $(-d)^{2mk} s'_1 = s_1$, $(-d)^{2mk} s'_2 = s_2$, for some $m \in \mathbb{N}$. By the local invariance, $R_c(2t-s'_1)$ and $R_c(2t-s'_2)$ co-land. By continuity, $f_{c}^{\circ 2mk} \left( R_c(2t-s'_1) \right) = R_c(2t-s_1)$, and  $f_{c}^{\circ 2mk} \left( R_c(2t-s'_2) \right) = R_c(2t-s_2)$ co-land as well. This completes the proof of the lemma.
\end{proof}

The proof of the above lemma does not use any fact specific to antiholomorphic dynamics, and hence, the conclusion of the lemma holds for any polynomial of degree $d \geq 2$ with connected Julia set. In general, one does not expect the rational lamination of a polynomial to be invariant under such an affine transformation. This can be easily seen in our case, which is the content of the following: 

\begin{lemma}[No Invariant Lamination]\label{invariant lamination only for period 1}
Let $c \in \mathcal{C}\subset\partial H$. Then the rational lamination $\mathcal{RL} \left( f_{c} \right)$ cannot be invariant under the transformation $s \mapsto 2t - s$. 
\end{lemma}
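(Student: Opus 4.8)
The plan is to argue by contradiction. The key structural fact is that $\mathcal{RL}(f_c)$ is automatically invariant under the angle map $\mu\colon s\mapsto -ds\pmod 1$ (since $f_c$ sends $R_c(s)$ to $R_c(-ds)$), so an \emph{extra} invariance under the reflection $\sigma\colon s\mapsto 2t-s$ ought to combine with $\mu$ to produce a rotational symmetry that the unicritical map $f_c$ cannot have. The elementary starting identity is
\[
\sigma\circ\mu\circ\sigma(s)=2(d+1)t-ds=R_{c_0}\bigl(\mu(s)\bigr),\qquad c_0:=2(d+1)t \pmod 1,
\]
where $R_{c_0}\colon s\mapsto s+c_0$; thus conjugating $\mu$ by $\sigma$ only translates it by $c_0$.

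Now assume $\mathcal{RL}(f_c)$ is $\sigma$-invariant. For any co-landing pair $s_1\sim s_2$, applying $\sigma$, then $\mu$, then $\sigma$ (each preserving $\sim$, the first and third by hypothesis, the middle by the dynamics) yields $R_{c_0}(\mu(s_1))\sim R_{c_0}(\mu(s_2))$. Since $\mu$ is surjective and the landing relation is completely invariant — every co-landing pair $v_1\sim v_2$ lifts through a non-critical preimage to a co-landing pair $s_1\sim s_2$ with $\mu(s_i)=v_i$ — this says exactly that $R_{c_0}$ preserves $\sim$, i.e.\ $\mathcal{RL}(f_c)$ is $R_{c_0}$-invariant. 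Pushing this invariance forward by $\mu$ replaces $c_0$ by $-dc_0$, so $\mathcal{RL}(f_c)$ is invariant under $R_{(-d)^nc_0}$ for all $n\ge 0$; as $t$ is rational, these rotations generate a finite cyclic group $\langle R_{1/m}\rangle$ of symmetries of $\mathcal{RL}(f_c)$.

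It remains to see that $m\ge 2$ and that this is impossible. If $c_0\equiv 0\pmod 1$ then $t\in\tfrac{1}{2(d+1)}\mathbb{Z}/\mathbb{Z}$; but a short check shows every $\mu$-periodic angle there has period at most $2$ (for $d$ odd one has $d^2\equiv 1\pmod{2(d+1)}$, while for $d$ even one step of $\mu$ already lands in $\tfrac{1}{d+1}\mathbb{Z}/\mathbb{Z}$, all of whose points are $\mu$-fixed). This contradicts $t$ being $\mu$-periodic of period $k\ge 3$ (if $t\in S$) or $2k\ge 6$ (if $t\in S'$), so $c_0\not\equiv 0$ and $m\ge 2$. (For period $1$, $t\in\tfrac{1}{d+1}\mathbb{Z}$ gives $c_0\equiv 0$, which is precisely why the period-$1$ arcs are the genuine exception, consistent with Lemma~\ref{fixed angles land}.)

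The crux, and the step I expect to be the main obstacle, is to rule out a nontrivial rotational symmetry of $\mathcal{RL}(f_c)$ for $c\neq 0$. Since all rational rays land on a parabolic arc, the assignment sending the landing point of $R_c(\theta)$ to that of $R_c(\theta+\tfrac1m)$ is a well-defined bijection of a dense subset of $J(f_c)$ respecting co-landing, hence extends to a homeomorphism $\Upsilon$ of $J(f_c)$ that in the B\"ottcher coordinate near $\infty$ is $w\mapsto e^{2\pi i/m}w$; one must upgrade this to the statement that $\Upsilon$ is the restriction of an affine symmetry $z\mapsto\lambda z$ of $J(f_c)$. The contradiction is then a direct computation: $f_c(\lambda z)=\lambda f_c(z)$ forces $\overline{\lambda}^d=\lambda$ and $(1-\lambda)c=0$, so $\lambda=1$ or $c=0$, while $c\in\mathcal{C}\subset\partial H$ with $k>1$ gives $c\neq 0$. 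The delicate point is the rigidity input that a purely combinatorial rotational symmetry is realized conformally — standard but nontrivial, and exactly where one uses that $f_c$ is unicritical. On \emph{root} arcs ($t\in S'$) this can be bypassed: there $R_c(t)$ is one of the two rays landing at the characteristic parabolic point $z_c$, so $\sigma$-invariance forces $R_c(2t-s)$ to co-land with $R_c(t)$ at $z_c$ (for the partner angle $s$), whence $2t-s\in\{t,s\}$ and $2(t-s)\equiv 0$, contradicting $0<|t-s|<\tfrac1d$. The outstanding work is to find the analogous combinatorial shortcut on \emph{co-root} arcs, where $\sigma$ fixes the unique ray at $z_c$ and can a priori send the root pair $\{\alpha_1,\alpha_2\}$ to its mirror image, leaving the symmetric midpoint configuration $2t=\alpha_1+\alpha_2$ to be excluded.
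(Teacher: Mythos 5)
Your opening reduction is correct and rather elegant: invariance under $\sigma\colon s\mapsto 2t-s$ together with the dynamical invariance under $\mu\colon s\mapsto -ds$ does force invariance under the rotation $R_{c_0}$, $c_0=2(d+1)t$, and then under all $R_{(-d)^nc_0}$; and your root-arc shortcut is exactly the paper's Case~1. But the crux of your proposal is not just unfinished, it is broken: the statement you aim for --- ``a nontrivial rotational symmetry of $\mathcal{RL}(f_c)$ forces $c=0$'' --- is \emph{false}. For every $c$ and every $d$ one has $\overline{e^{2\pi i/d}}^{\,d}=e^{-2\pi i}=1$, hence $f_c(e^{2\pi i/d}z)=f_c(z)$; so $J(f_c)$ is invariant under $z\mapsto e^{2\pi i/d}z$, and feeding this identity into the B\"{o}ttcher functional equation $\phi_c\circ f_c=\overline{\phi_c}^{\,d}$ gives $\phi_c(e^{2\pi i/d}z)=e^{2\pi i/d}\phi_c(z)$, i.e.\ the rotation carries $R_c(s)$ to $R_c(s+1/d)$. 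Thus $\mathcal{RL}(f_c)$ is invariant under $s\mapsto s+1/d$ for \emph{every} parameter. This ubiquitous symmetry permutes the fibers of $f_c$ rather than commuting with it, which pinpoints the error in your ``direct computation'': the homeomorphism $\Upsilon_a$ induced by an angle-rotation $a$ only satisfies the twisted relation $f_c\circ\Upsilon_a=\Upsilon_{-da}\circ f_c$, not $f_c\circ\Upsilon_a=\Upsilon_a\circ f_c$, so the equation $f_c(\lambda z)=\lambda f_c(z)$, from which you derive $(1-\lambda)c=0$, is unjustified --- applied to the $1/d$-symmetry it would ``prove'' $c=0$ for all $c$. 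To salvage your strategy you would need (i) the rigidity step you wave off as ``standard'' (a combinatorial rotation of the lamination is realized by an affine rotation about the critical point); it is not standard --- it requires local connectivity of $J(f_c)$, an extension of $\Upsilon$ through the filled Julia set, and a conformality/removability argument, and is the real work --- and (ii) a proof that the group generated by the $R_{(-d)^nc_0}$ is not contained in $\langle R_{1/d}\rangle$, followed by the computation with the correct twisted relation. (Point (ii) happens to be true, since $c_0$ has denominator coprime to $d$, so $c_0\not\equiv 0$ is never a nonzero multiple of $1/d$; but you never address it, and your argument as written cannot distinguish the forbidden rotations from the ever-present one.)

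Independently of this, you explicitly leave the co-root case --- in particular the symmetric configuration $2t=\alpha_1+\alpha_2$ --- as ``outstanding work,'' and that is precisely where the paper's proof lives, entirely combinatorially and with no symmetry-realization input. On a co-root arc with $2t\neq\alpha_1+\alpha_2$, the paper reflects the co-landing pair $\{\alpha_1,\alpha_2\}$: since exactly one of $2t-\alpha_1$, $2t-\alpha_2$ lies in the characteristic arc $(\alpha_1,\alpha_2)$, the reflected pair can only co-land on the ray pair itself, forcing four rays at angles $\alpha_1,\alpha_2,2t-\alpha_1,2t-\alpha_2$ to land at one point, contradicting the structure of antiholomorphic orbit portraits \cite[Theorem 2.6]{Sa}. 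In the symmetric case $2t=\alpha_1+\alpha_2$ the paper does not reflect the root pair of $H$ at all; it reflects the period-one wake pair $\{\frac{j}{d+1},\frac{j+1}{d+1}\}$, whose rays co-land (and alone) for every $c\in W_j\supset H$. The same separation argument then pins down $t=\frac{2j+1}{2(d+1)}$, which forces $d$ odd, whence $(-d)^2t=t$, contradicting that $t$ has odd period $k\neq 1$. So the configuration you could not exclude is excluded by choosing a \emph{second}, independently known invariant co-landing pair to reflect --- a purely combinatorial move that avoids every rigidity issue your approach runs into.
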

\begin{proof}
We will assume that the rational lamination $\mathcal{RL} \left( f_{c} \right)$ is invariant under the given transformation, and arrive at a contradiction. 
\begin{flushleft}
\emph{Case 1. $t \in S'$}
\end{flushleft}

Without loss of generality, we assume that $t=\alpha_1$. In the dynamical plane of $c$, $R_c(\alpha_1)$ and $R_c(\alpha_2)$ land at a common point (namely, at the characteristic parabolic point). By the invariance, the dynamical rays at angles $\alpha_1$ and $2\alpha_1-\alpha_2$ must land at a common point as well. This is clearly impossible since exactly two rays land at the characteristic parabolic point. 

\begin{flushleft}
\emph{Case 2. $t \in S$, $2t \neq \alpha_1 + \alpha_2$}
\end{flushleft}

Note that in the dynamical plane of $f_c$, the dynamical rays $R_c(\alpha_1)$ and $R_c(\alpha_2)$ land at a common point. By the invariance, the dynamical rays at angles $2t-\alpha_1$ and $2t-\alpha_2$ must land at a common point as well. By our assumption, the rays $R_{c}(2t-\alpha_1)$ and $R_{c}(2t-\alpha_2)$ lie in different connected components of $\mathbb{C} \setminus \left( \overline{R_{c}(\alpha_1)} \bigcup \overline{R_{c}(\alpha_2)} \right)$. This forces four different rays $R_{c}(\alpha_1)$, $R_{c}(\alpha_2)$, $R_{c}(2t-\alpha_1)$, and $R_{c}(2t-\alpha_2)$ to land at a common point (we have used the fact that $0 < \alpha_1 < t < \alpha_2$, and $\alpha_2 - \alpha_1 < 1/2$), which contradicts \cite[Theorem 2.6]{Sa}.

\begin{figure}[!ht]
\noindent
\begin{center}
\begin{minipage}{0.48\linewidth}
\includegraphics[scale=0.19]{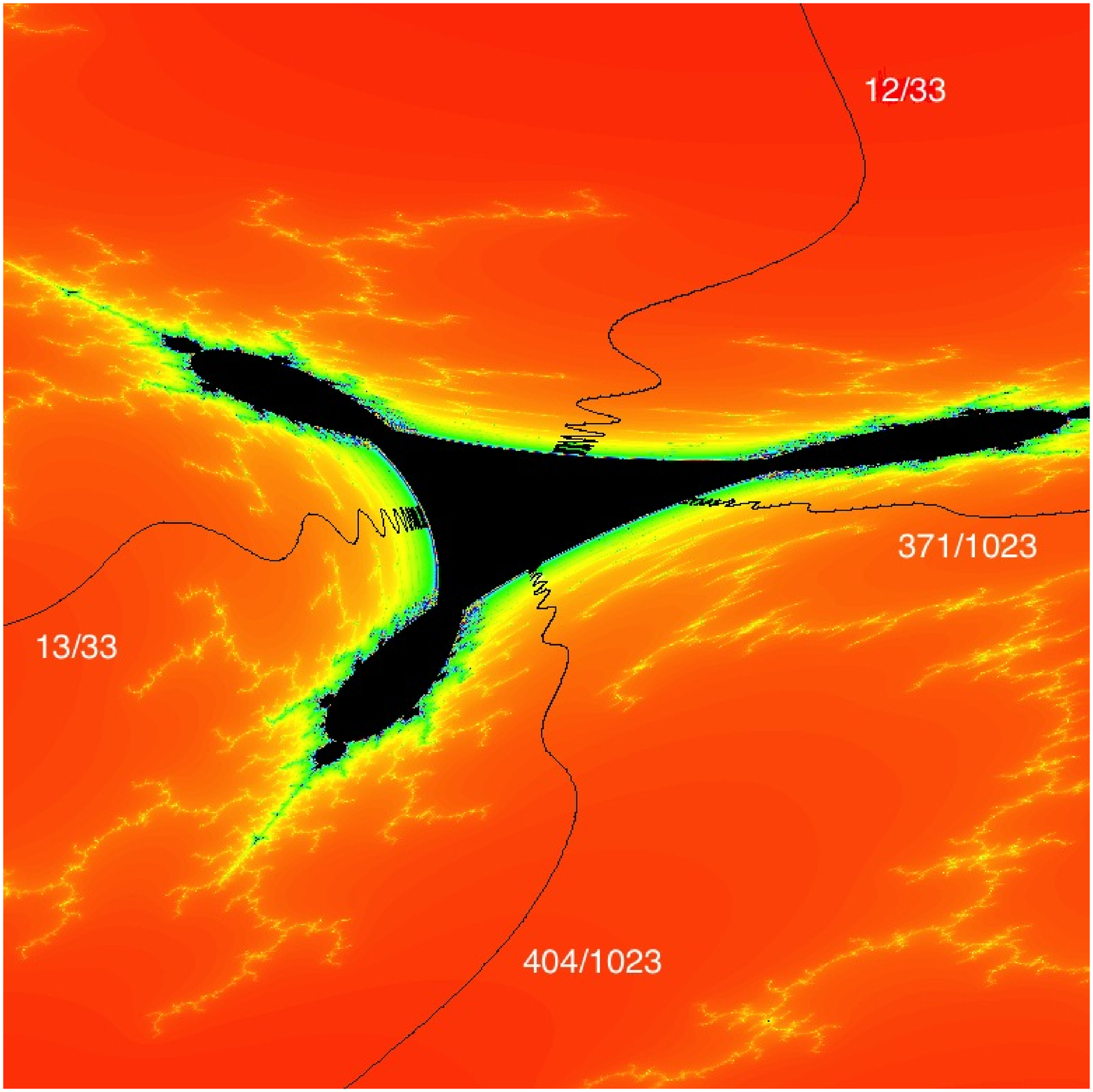}
\end{minipage}
\begin{minipage}{0.48\linewidth}
\includegraphics[scale=0.19]{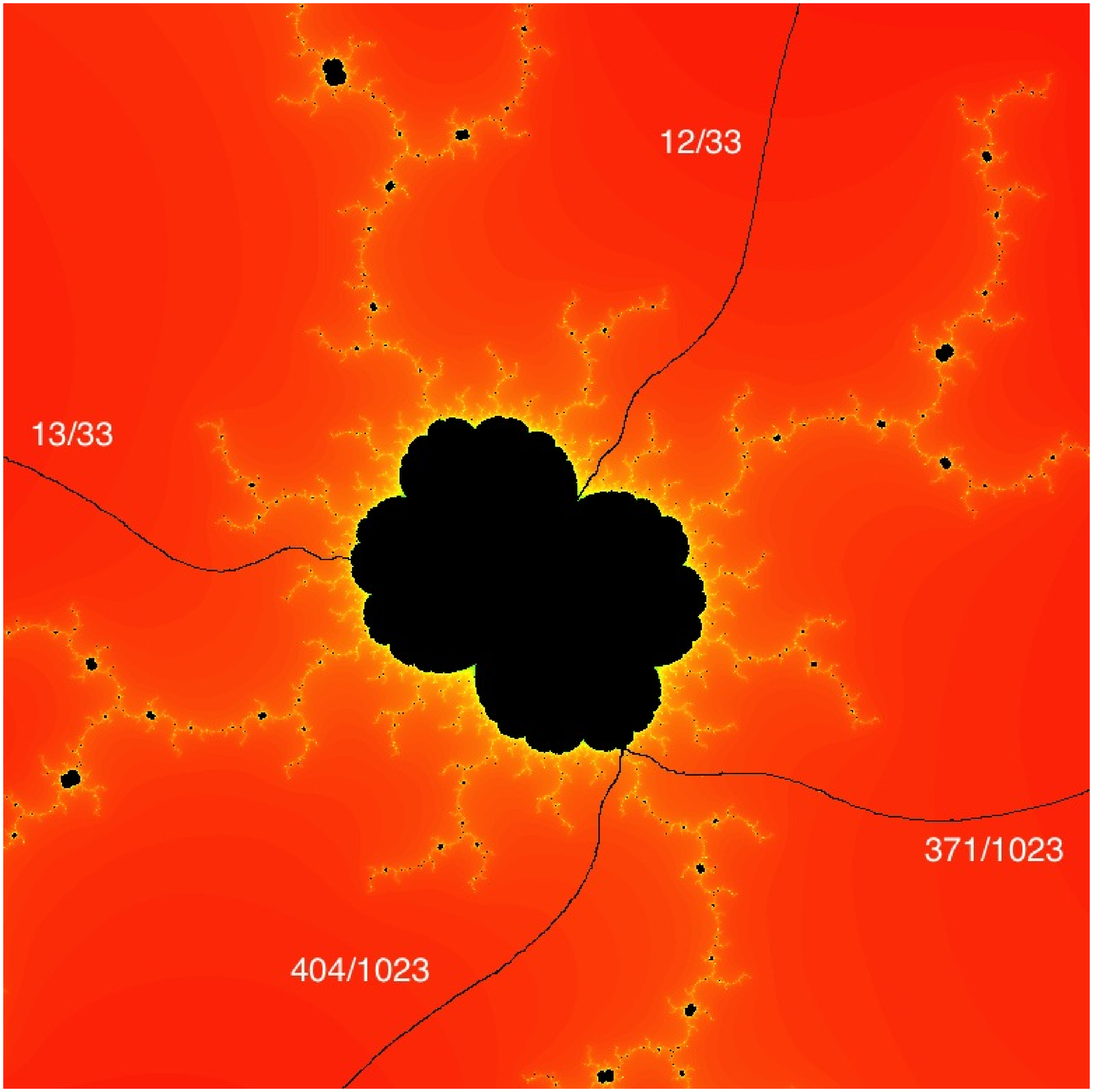}
\end{minipage}
\end{center}
\caption{Left: Parameter rays accumulating on the boundary of a hyperbolic component of period 5 of the tricorn. Right: The corresponding dynamical rays landing on the boundary of the characteristic Fatou component in the dynamical plane of a parameter on the boundary of the same hyperbolic component.}
\label{parameter_dynamical}
\end{figure}

\begin{flushleft}
\emph{Case 3. $t \in S$, $2t = \alpha_1 + \alpha_2$}
\end{flushleft}
We have to work a little harder in this case. Note that for any $i \in \{ 0,1,\cdots,d\}$, the parameter ray $\mathcal{R}^d_{\frac{i}{d+1}}$ lands on the parabolic arc $\mathcal{C}_i$ on the boundary of the period $1$ hyperbolic component. Define the wake $W_i$ to be the connected component of $\mathbb{C} \setminus \{\mathcal{R}_{\frac{i}{d+1}}^d \cup \mathcal{R}_{\frac{i+1}{d+1}}^d \cup \overline{\mathcal{C}_i} \cup \overline{\mathcal{C}_{i+1}}\}$ not containing $0$. Then each parameter in $W_i$ has a repelling periodic orbit admitting the orbit portrait $\mathcal{P}_i = \{ \{\frac{i}{d+1},\frac{i+1}{d+1}\} \}$ such that the dynamical rays at angles $\frac{i}{d+1}$ and $\frac{i+1}{d+1}$ together with their common landing point separate the critical value from the critical point. 

It is easy to see that $H$ must be contained in some $W_j$. In particular, $R_c(\frac{j}{d+1})$ and $R_c(\frac{j+1}{d+1})$ land at the same point (and no other ray lands there), and $\frac{j}{d+1} < t < \frac{j+1}{d+1}$. By the invariance property, the rays $R_c(2t-\frac{j}{d+1})$ and $R_c(2t-\frac{j+1}{d+1})$ must land at the same point as well. By arguing as in Case 2, we can conclude that $t = \frac{2j+1}{2(d+1)}$. Since $t$ is a periodic angle under multiplication by $-d$, it follows that $d$ must be odd. A simple computation now shows that $(-d)^2 t = t$, which contradicts the fact that the period of $t$ is an odd integer $k \neq 1$. This completes the proof of the lemma.
\end{proof}

\begin{remark}
Case 3 of the previous lemma never occurs for the tricorn. For any hyperbolic component $H$ of odd period $k(\neq1)$ of the tricorn, we have $S = \{ \theta_1, \theta_2 \}$, $S' = \{ \alpha_1, \alpha_2\}$, where $\left( 1+2^k \right) \cdot \left( \theta_1 - \alpha_1 \right) = \left( \alpha_2 - \alpha_1 \right) = \left( 1+2^k \right) \cdot \left( \alpha_2 - \theta_2 \right)$ (Compare \cite[Corollary 5.16]{MNS} and Figure \ref{parameter_dynamical}).
\end{remark}

\begin{proof}[Proof of Theorem \ref{most rays wiggle}]
Let $\mathcal{R}^d_t$ accumulates on $\mathcal{C} \subset \partial H$.

\emph{Case 1. $t \in S$.}

Let $c_0$ be the parameter on $\mathcal{C}$ whose critical value has (incoming) Ecalle height $0$. Note that $\mathcal{C}$ must be a co-root arc of $H$. By Lemma \ref{horizontal implies invariance of lamination} and Lemma \ref{invariant lamination only for period 1}, the projection of the dynamical ray $R_{c_0}(t)$ under the repelling Fatou coordinate must traverse a non-degenerate interval of (outgoing) Ecalle heights. Since this dynamical ray is fixed by the first antiholomorphic return map, the interval of (outgoing) Ecalle heights traversed by it must be of the form $\left[-h,h\right]$ for some $h > 0$. Since the rays $R_{c}(t)$ depend uniformly continuously on the parameter $c$, and since the projection into Ecalle cylinders is also continuous, we can choose a small neighborhood $U$ of $c_0$ such that for all $c \in \overline{U} \setminus H$, the projection of the rays $R_{c}(t)$ into the Ecalle cylinders traverse (outgoing) heights at least $\left[-h + \epsilon, h - \epsilon\right]$ (Note that in the outgoing cylinder of $c_0$, $R_{c_0}(t)$ traverses Ecalle heights $\left[-h,h\right]$). To transfer the variation of Ecalle height of $R_{c}(t)$ to wiggling of the parameter ray $\mathcal{R}_{t}^d$, we employ \cite[Proposition 4.8]{HS}. 

Let $c_{h'} \in \mathcal{C}$ be the parameter on $\mathcal{C}$ whose critical value has (incoming) Ecalle height $h'$. We pick a $c_{h'} \in U$ with $h' \in \left[-h + 2 \epsilon , h - 2\epsilon\right]$, and choose any smooth path $\gamma : [0, \delta] \rightarrow \overline{U}$ with $\gamma(0) = c_{h'}$ but so that, except for $\gamma(0)$, the path avoids closures of hyperbolic components of period $k$, and so that the path is transverse to $\mathcal{C}$ at $c_{h'}$. 
 
For $s \in \left[ 0, \delta \right]$, let $z(s)$ be the critical value. For $s > 0$, the critical orbit ``transits'' from the incoming Ecalle cylinder to the outgoing cylinder; as $s \downarrow 0$, the image of the critical orbit in the outgoing Ecalle cylinder has (outgoing) Ecalle height tending to $h' \in \left[ -h + 2 \epsilon , h - 2\epsilon \right] $, while the phase tends to infinity. Therefore, there is $s \in \left(0, \delta_{\epsilon}\right)$ arbitrarily close to $0$ at which the critical value, projected into the incoming cylinder, and sent by the transfer map to the outgoing cylinder, lands on the projection of the ray $R_{\gamma(s)}(t)$. But in the dynamics of $f_{\gamma(s)}$, this means that the critical value is on the dynamical ray $R_{\gamma(s)}(t)$, so $\gamma(s)$ is on the parameter ray $\mathcal{R}_{t}^{d}$. 

Hence, any smooth path starting at $c_{h'} \in \mathcal{C} \cap U$ (with $h' \in \left[-h+ 2 \epsilon , h - 2\epsilon\right]$), and living inside $\overline{U} \setminus \overline{H}$ thereafter, intersects the parameter ray $\mathcal{R}_t^d$ infinitely often. This proves that $\mathcal{R}_t^d$ cannot land.  

\vspace{2mm}

\emph{Case 2. $t \in S'$.}

Let $c_{h} \in \mathcal{C}$ be the parameter on $\mathcal{C}$ whose critical value has (incoming) Ecalle height $h$, and the interval of (outgoing) Ecalle heights traversed by $R_{c_h}(t)$ be $[l_t(c_h)$, $u_t(c_h)]$. By Lemma \ref{horizontal implies invariance of lamination} and Lemma \ref{invariant lamination only for period 1}, $u_t(c_h) > l_t(c_h)$ for every parameter $c_h$. If we knew that there is a parameter $c_h \in \mathcal{C}$ with $h \in \left(l_t(c_h),u_t(c_h)\right)$ (observe that this was automatic in Case 1), then the proof of the wiggling of the parameter ray $\mathcal{R}^d_t$ would proceed exactly as in the previous case. Hence, it suffices to prove the existence of such a parameter $c_h$.

Consider parameters $c_h\in \mathcal{C}$ with $h$ sufficiently close to $+\infty$ such that $c_h$ is a point of a period doubling bifurcation. Perturbing this parameter outside $H$, we again obtain the `open gate' situation, and for such a perturbed parameter, the critical value belongs to a period $2k$ Fatou component which lies above the corresponding dynamical ray at angle $t$. Hence, the critical value exits through the gate staying in the period $2k$ Fatou component all along, and its Ecalle height (here, we do not need to distinguish between incoming and outgoing heights as the height is preserved in the process of transiting through the gate) is necessarily greater than the minimum Ecalle height of the dynamical $t$-ray. Since Fatou coordinates vary continuously under perturbation, this shows that $h \geq l_t(c_h)$ for parameters $c_h$ with $h$ sufficiently close to $+\infty$. Analogously, for parameters $c_h \in \mathcal{C}$ with $h$ sufficiently close to $-\infty$, $h \leq u_t(c_h)$. Once again, these two inequalities together with the fact that the (incoming) Ecalle height of the critical value as well as the interval of (outgoing) Ecalle heights traversed by the dynamical ray at angle $t$ depend continuously on $h$ imply that there is some parameter $c_h$ on $\mathcal{C}$ for which $h \in \left(l_t(c_h),u_t(c_h)\right)$.

This completes the proof of the theorem.
\end{proof}

\section{A Combinatorial Classification}\label{combinatorics_tells_wiggling}
In this section, we will give an algorithm to find whether a rational parameter ray $\mathcal{R}_{t}^d$ lands or oscillates based only on the combinatorics of $t$. The following lemma will be useful for this purpose.

Recall that a finite collection $\mathcal{P} = \{ \mathcal{A}_1 , \mathcal{A}_2, \cdots, \mathcal{A}_k \}$ of subsets of $\mathbb{Q}/\mathbb{Z}$ satisfying the five properties of \cite[Theorem 2.6]{Sa} is called a \emph{formal orbit portrait}.

\begin{lemma}\label{when 2k accumulates}
Let $t \in \mathbb{Q}/\mathbb{Z}$ has period $2k$ under multiplication by $-d$, where $k$ is an odd integer. Consider the collection of finite subsets of $\mathbb{Q}/\mathbb{Z}$ given by $\displaystyle$ $\mathcal{P} = \{ \mathcal{A}_1, \mathcal{A}_2, \cdots, \mathcal{A}_k \}$, where $\mathcal{A}_1 = \{ t, (-d)^k t \}$, and $\mathcal{A}_{i+1} = (-d) \mathcal{A}_i,$ i (mod k). Then the parameter ray $\mathcal{R}_{t}^d$ accumulates on the parabolic root arc of a hyperbolic component of period $k$ if and only if $\mathcal{P}$ satisfies the properties of a formal orbit portrait with characteristic angles $t$ and $(-d)^k t$. 
\end{lemma}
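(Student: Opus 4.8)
The plan is to prove the two implications separately. The forward direction (accumulation on a root arc $\Rightarrow$ formal orbit portrait) will be a direct unwinding of the structure of odd period hyperbolic components recalled in Section~\ref{Secbackground}, whereas the reverse direction (formal orbit portrait $\Rightarrow$ accumulation) will require a realization argument for orbit portraits, which I expect to be the real work. Throughout I write $\alpha := t$ and $\alpha' := (-d)^k t$, so that $\mathcal{A}_1 = \{\alpha,\alpha'\}$. Since $t$ is $2k$-periodic under multiplication by $-d$ and $k$ is odd, one has $(-d)^{2k}t = t$ while $(-d)^k t \neq t$; hence $\mathcal{A}_1$ is genuinely two-element, the sets $\mathcal{A}_1,\dots,\mathcal{A}_k$ are pairwise distinct, and $(-d)\mathcal{A}_k = \mathcal{A}_1$, so $\mathcal{P}$ closes up after exactly $k$ steps and has cyclic period $k$.

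For the forward implication, suppose $\mathcal{R}_t^d$ accumulates on a root arc $\mathcal{C}$ on the boundary of a hyperbolic component $H$ of period $k$. By the boundary structure of odd period components recalled before the perturbation discussion (cf.\ \cite[Theorem~1.2]{MNS}), the only parameter rays accumulating on the root arc are those at the two $2k$-periodic angles $\alpha_1,\alpha_2 \in S'$, and these are exactly the angles of the two dynamical rays landing at the characteristic (disconnecting) parabolic point $z_0$ of any $c \in \mathcal{C}$. Hence $t \in \{\alpha_1,\alpha_2\}$; say $t = \alpha_1$. Since $z_0$ has exact period $k$, the antiholomorphic first return map $f_c^{\circ k}$ fixes $z_0$ and therefore permutes the two rays landing there, acting on angles as $\theta \mapsto (-d)^k\theta$. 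If it fixed $\alpha_1$, then $\alpha_1$ would have period dividing $k$, contradicting $2k$-periodicity; thus $\alpha_2 = (-d)^k\alpha_1 = (-d)^k t = \alpha'$. Since the characteristic point has valence $2$ and the valence is constant along the orbit, the dynamical orbit portrait of the characteristic orbit has its set containing $t$ equal to $\{t,(-d)^k t\} = \mathcal{A}_1$, and applying $f_c$ (i.e.\ $-d$) recovers the remaining $\mathcal{A}_i$; so the dynamical orbit portrait is exactly $\mathcal{P}$. By \cite[Theorem~2.6]{Sa} it satisfies the five defining properties, with characteristic angles $t,(-d)^k t$, which is the assertion.

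For the reverse implication I would invoke the realization of formal orbit portraits in the antiholomorphic family. Given that $\mathcal{P}$ is a formal orbit portrait with characteristic angles $t,(-d)^k t$, the parameter rays $\mathcal{R}_t^d$ and $\mathcal{R}_{(-d)^k t}^d$ bound a parameter wake, and every parameter in this wake carries a repelling periodic orbit realizing $\mathcal{P}$ (the wake construction already used in Case~3 of Lemma~\ref{invariant lamination only for period 1}). The cyclic period of $\mathcal{P}$ is $k$, so the realized orbit has period $k$; since $k$ is odd, the root of the wake is a parabolic parameter on a root arc of a hyperbolic component of period exactly $k$ (Lemma~\ref{LemOddIndiffDyn} and Theorem~\ref{parabolic arcs}). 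Applying the forward description once more, the parameter rays at the characteristic angles of this portrait, in particular $\mathcal{R}_t^d$, accumulate on that root arc.

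The main obstacle is this reverse direction, specifically pinning down that the realizing component has period exactly $k$ and that $\mathcal{R}_t^d$ accumulates on its \emph{root} arc rather than on a co-root arc or on an even period component. The delicate point, special to the antiholomorphic setting, is that a $2k$-periodic angle attaches to an \emph{odd} period-$k$ component through its root arc precisely because $f_c^{\circ k}$ swaps the two characteristic rays; one must verify that the two-element structure $\mathcal{A}_1 = \{t,(-d)^k t\}$ together with $(-d)^k$ swapping its entries forces the realized orbit to have period $k$ and its characteristic landing point to be the disconnecting parabolic point, so that $\mathcal{C}$ is genuinely a root arc. Establishing this amounts to combining the combinatorial realization of formal orbit portraits from \cite{Sa} with the boundary structure of odd period components from \cite{MNS}.
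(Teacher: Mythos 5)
Your forward implication is correct and is essentially the paper's own argument: accumulation on a root arc forces the dynamical rays at angles $t$ and $(-d)^k t$ to co-land at the characteristic (disconnecting) parabolic point, so the orbit portrait generated is exactly $\mathcal{P}$, with characteristic angles $t$ and $(-d)^k t$; your remark that $f_c^{\circ k}$ must swap the two rays (else $t$ would have period dividing $k$) is a nice explicit version of what the paper leaves implicit. The reverse implication, however, is where the content of the lemma lies, and there your argument is circular. You begin by asserting that $\mathcal{R}_t^d$ and $\mathcal{R}_{(-d)^k t}^d$ ``bound a parameter wake.'' A wake bounded by two parameter rays exists only if the closures of those rays jointly separate the plane, i.e.\ only if the rays land at a common point or accumulate on a common arc --- which is precisely what the lemma asks you to prove. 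In Case 3 of Lemma \ref{invariant lamination only for period 1} the wakes $W_i$ are legitimate objects because the bounding rays are at fixed angles and are already known to land (Lemma \ref{fixed angles land}); no such information is available here. Your concluding appeal to the forward implication closes the circle, since the forward implication takes as its hypothesis that $\mathcal{R}_t^d$ accumulates on a root arc. (A further slip imported from holomorphic intuition: in the antiholomorphic setting there is no single ``root of the wake''; the candidate object is an arc of parabolic parameters, not a point.)

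What actually fills this gap in the paper is a separation-and-stability argument modeled on \cite[Lemma 5.6]{MNS}, none of which appears in your proposal. First, one observes that the accumulation set of any parameter ray at a $2k$-periodic angle is contained in a finite union $F$ of the closures of the period $k$ root arcs and the finitely many even period parabolic parameters of ray period $2k$. Second, one cuts the plane along all parameter rays at angles of $\mathcal{A}_1 \cup \cdots \cup \mathcal{A}_k$ together with $F$, and uses the fact that the co-landing pattern of the dynamical rays at these angles is constant on each complementary component \cite[Lemma 2.4]{MNS}. Third, in the component $U_1$ whose exterior parameters have external angle in $\left(t, (-d)^k t\right)$, every exterior parameter admits $\mathcal{P}$ as a repelling orbit portrait (proof of \cite[Theorem 3.1]{Sa}), whereas exterior parameters with external angle outside the characteristic arc cannot admit $\mathcal{P}$; so if the two rays did not land at a common point or accumulate on a common root arc, $U_1$ would contain exterior parameters of both kinds, contradicting the constancy of the landing pattern. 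Finally, one rules out co-landing at an even period parabolic parameter of ray period $2k$. These steps --- produce the common accumulation locus rather than assume it --- are the real work of the reverse direction; it cannot be reduced to quoting a realization statement, because in this family realization of the portrait in parameter space is exactly the co-accumulation of the two rays.
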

\begin{proof}
If $\mathcal{R}_{t}^d$ accumulates on a sub-arc of the parabolic root arc of an odd period hyperbolic component, then the period of the hyperbolic component must be $k$, and the dynamical rays $R_{\tilde{c}}(t)$ and $R_{\tilde{c}}((-d)^k  t)$ co-land at the dynamical root of the characteristic Fatou component of the center $\tilde{c}$ of $H$. In fact, these are the only rays landing there. It is easy to see that $t$ and $(-d)^k t$ generate the orbit portrait $\mathcal{P}$, and they are also the characteristic angles of the orbit portrait.

The proof of the converse is similar to \cite[Lemma 5.6]{MNS}. For completeness, we work out the details here. Without loss of generality, we can assume that the characteristic arc of $\mathcal{P}$ is $\left( t , (-d)^k t \right)$. Note that any accumulation point of a parameter ray at a $2k$-periodic angle is either a parabolic parameter of odd period $k$ or a parabolic parameter of even period $r$ with $r \vert 2k$ such that the corresponding dynamical ray of period $2k$ lands at the characteristic parabolic point in the dynamical plane of that parameter. Thus the set of accumulation points of $\mathcal{R}_{\theta}^d$, for $\theta \in \mathcal{A}_1 \cup \cdots \cup \mathcal{A}_k$, is contained in $F =($The union of the closures of the finitely many root arcs of period $k) \bigcup ($The finitely many parabolic parameters of even period, and of ray period $2k)$.

Consider the connected components $U_i$ of $\displaystyle \mathbb{C} \setminus \left(\bigcup_{\theta \in \mathcal{A}_1 \cup \cdots \cup \mathcal{A}_k} \mathcal{R}_{\theta}^d \cup F \right)$. There are only finitely many components $U_i$, and they are open. Then for every parameter $c \in U_i$, the co-landing patterns of the dynamical rays $R_c(t)$ with $t \in\mathcal{A}_1 \cup \cdots \cup \mathcal{A}_k$ remain the same \cite[Lemma 2.4]{MNS}.
 
Let $U_1$ be the component which contains all parameters $c$ outside $\mathcal{M}_d^{\ast}$ with external angle $t(c) \in \left( t , (-d)^k t \right)$ (there is such a component as $(t , (-d)^k t)$ does not contain any other angle of $\mathcal{P}$). $U_1$ must have the two parameter rays $\mathcal{R}_{t}^d$ and $\mathcal{R}_{(-d)^k t}^d$ on its boundary. By the proof of \cite[Theorem 3.1]{Sa}, each $c \in U_1 \setminus \mathcal{M}_d^{\ast}$ has a repelling periodic orbit admitting the portrait $\mathcal{P}$. If the two parameter rays at angles $t$ and $(-d)^k t$ do not land at a common point or accumulate on a common root arc, then $U_1$ would contain parameters $c$ outside $\mathcal{M}_d^{\ast}$ with $t(c) \notin \left(t , (-d)^k t \right)$. It follows from  the remark at the end of \cite[\S 3]{Sa} that such a parameter can never admit the orbit portrait $\mathcal{P}$, which contradicts the stability of the co-landing patterns of the dynamical rays $R_c(t)$ (with $t \in\mathcal{A}_1 \cup \cdots \cup \mathcal{A}_k$) throughout $U_i$. Hence, the parameter rays $\mathcal{R}_{t}^d$ and $\mathcal{R}_{(-d)^k t}^d$  must land at a common even period parabolic parameter of ray period $2k$ or accumulate on a common root arc of period $k$ of $\mathcal{M}_d^{\ast}$. But it is easy to see that if $\mathcal{R}_{t}^d$ and $\mathcal{R}_{(-d)^k t}^d$ co-land at a parabolic parameter, then the period of its parabolic orbit must be odd, ruling out the first possibility.
\end{proof} 

\begin{theorem}[Combinatorial Classification] \label{complete picture of rays}
Let $t \in \mathbb{Q}/\mathbb{Z}$.

1) If the period of $t$ under multiplication by $-d$ is $4k$ for some $k \in \mathbb{N}$, then $\mathcal{R}_{t}^d$ lands at a parabolic parameter on the boundary of a hyperbolic component of period $4k$.

2) If the period of $t$ under multiplication by $-d$ is an odd integer $k$, then it lands if $k=1$, and accumulates on a sub-arc (of positive length) of a parabolic co-root arc on the boundary of a hyperbolic component of period $k$ otherwise.

3) If the period of $t$ under multiplication by $-d$ is $2k$ for some odd integer $k$, then it accumulates on a sub-arc (of positive length) of the parabolic root arc of a hyperbolic component of period $k$ if and only if the collection of finite subsets of $\mathbb{Q}/\mathbb{Z}$ given by $\mathcal{P} = \{ \mathcal{A}_1, \mathcal{A}_2, \cdots, \mathcal{A}_k \}$, where $\mathcal{A}_1 = \{ t, (-d)^k t \}$, and $\mathcal{A}_{i+1} = (-d) \mathcal{A}_i,$ i (mod k), satisfies the properties of a formal orbit portrait with characteristic angles $t$ and $(-d)^k t$. Otherwise, it lands at a parabolic parameter on the boundary of a hyperbolic component of period $2k$.

4) If $t$ is strictly pre-periodic under multiplication by $-d$, then $\mathcal{R}_{t}^d$ lands at a Misiurewicz parameter.
\end{theorem}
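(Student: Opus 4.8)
The plan is to reduce each of the four cases to one of two ``engine'' results already in hand: the wiggling statement of Theorem~\ref{most rays wiggle} (any ray accumulating on the boundary of an odd-period hyperbolic component of period $>1$ accumulates on a positive-length arc), and the fact, recorded in the introduction and proved in \cite[Lemma~7.2]{MNS}, that a parameter ray whose accumulation set meets an \emph{even}-period parabolic parameter must land at a single point, since such parabolics are isolated with their combinatorics. With these in place the theorem becomes a bookkeeping problem: for each possible period of $t$ under $s\mapsto -ds$, I must identify \emph{which} parabolic object $\mathcal{R}_t^d$ accumulates on, and then invoke the appropriate engine. The guiding dictionary is the structure of $\partial H$ recorded after Theorem~\ref{parabolic arcs}: on a period-$k$ (odd) component the $d$ co-root arcs carry the $k$-periodic angles $\theta_i$, the single root arc carries the $2k$-periodic characteristic angles $\alpha_1,\alpha_2$, and on an even-period component the accumulating rays have ray period equal to the period.

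For part~2 with $k=1$, the fixed angles under $-d$ are exactly $\{0,\tfrac{1}{d+1},\dots,\tfrac{d}{d+1}\}$, which land by Lemma~\ref{fixed angles land}. For odd $k>1$, combinatorial completeness of the orbit-portrait correspondence (\cite{Sa},\cite{MNS}) exhibits $t$ as a co-root angle $\theta_i$ of a unique period-$k$ component $H$, so by the structure theorem $\mathcal{R}_t^d$ accumulates inside a single co-root arc, and Theorem~\ref{most rays wiggle} upgrades this to accumulation on a positive-length sub-arc. For part~3, Lemma~\ref{when 2k accumulates} \emph{is} the required dichotomy: if $\mathcal{P}$ is a formal orbit portrait with characteristic angles $t,(-d)^kt$, the ray accumulates on a root arc of a period-$k$ component and wiggles by Theorem~\ref{most rays wiggle}; otherwise the only possibility left by the enumeration in that lemma's proof is an even-period parabolic of ray period $2k$, which forces landing via \cite[Lemma~7.2]{MNS} at a period-$2k$ parameter.

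Part~1 is a short modular-arithmetic refinement of the same enumeration. A ray of period $4k$ is even, hence cannot be a co-root ray (those are odd-periodic); and it cannot be a root ray of an odd-period component of period $m$, since such rays have period $2m\equiv 2\pmod 4$ while $4k\equiv 0\pmod 4$. Thus its accumulation set can only meet an even-period parabolic parameter, and \cite[Lemma~7.2]{MNS} gives landing; matching ray period to period for even components identifies the period as $4k$.

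Part~4 is the single case not covered by the two engines, and I expect it to be the main obstacle. Since $t$ is strictly pre-periodic, any accumulation point is a Misiurewicz parameter whose critical orbit is strictly pre-periodic, hence eventually repelling and non-parabolic, so there is no indifferent dynamics to produce wiggling and the relevant dynamical rays land stably. I would establish landing by adapting the standard holomorphic argument to the $(-d)$-dynamics: at such a parameter $R_c(t)$ lands at the strictly pre-periodic critical value, this landing persists under perturbation, and a normal-families/transversality argument then confines $\mathcal{R}_t^d$ to a single limit point. The delicate point—and where the antiholomorphic setting demands genuine care rather than citation—is verifying the stability of the co-landing pattern together with the transversality of the relevant parameter-plane map at the Misiurewicz parameter; this is why I regard part~4 as the principal technical hurdle, the periodic cases~1--3 being essentially a disciplined enumeration built on the already-proved Theorem~\ref{most rays wiggle} and Lemma~\ref{when 2k accumulates}.
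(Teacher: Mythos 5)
Your treatment of parts 1)--3) coincides with the paper's own proof: both reduce to \cite[Lemma~7.2]{MNS} (rays meeting even-period parabolics land, since those are isolated with their combinatorics), \cite[Corollary~5.14]{MNS} (odd-periodic rays accumulate on co-root arcs), Lemma~\ref{fixed angles land}, Lemma~\ref{when 2k accumulates}, and Theorem~\ref{most rays wiggle}; your mod-$4$ bookkeeping in part 1) is just an unwinding of the MNS citation that the paper invokes directly. The divergence --- and the genuine gap --- is in part 4), which you correctly single out as the hard case but do not actually prove.

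For part 4) you set up what the paper also sets up: every accumulation point is a Misiurewicz parameter whose critical orbit has the pre-period and period dictated by $t$ (adapting \cite[Theorem 1.1 (3)]{S1a}), and the accumulation set of a ray is connected. From there, however, you propose a ``normal-families/transversality'' argument adapted from the holomorphic case, and you yourself flag the transversality as unresolved. That is exactly where the adaptation breaks down. In the Mandelbrot set, landing of pre-periodic rays ultimately rests on the fact that Misiurewicz parameters with given combinatorics are \emph{isolated}: they are zeros of a not-identically-zero holomorphic function of $c$, so the identity theorem applies. For $f_c(z)=\overline{z}^d+c$ the analogous equation $f_c^{\circ (l+p)}(c)=f_c^{\circ l}(c)$ is polynomial in $c$ and $\overline{c}$, hence only real-analytic, and real-analytic equations in two real variables can --- and in this very parameter space do --- have one-dimensional solution sets: the arcs of parabolic parameters of Theorem~\ref{parabolic arcs} are exactly such a phenomenon. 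So isolation (or finiteness) of Misiurewicz parameters with fixed pre-period and period is precisely what must be proved, and neither normal families nor a hoped-for transversality statement supplies it. The paper closes this gap by a completely different, algebraic device: pass to the second iterate, so that the family embeds in the holomorphic two-parameter family $\{(z^d+a)^d+b\}_{a,b\in\mathbb{C}}$; the strict pre-periodicity conditions on the two critical orbits define two distinct algebraic curves in $\mathbb{C}^2$; their intersection lies in the connectedness locus of this degree-$d^2$ family, which is compact \cite{BH1}, so it cannot contain a (necessarily unbounded) algebraic curve and is therefore finite by B\'ezout's theorem. Finiteness together with connectedness of the accumulation set then yields landing. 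Without this finiteness step, or an honest substitute for it, your part 4) remains unproven.
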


\begin{proof}
1) See \cite[Lemma 7.2]{MNS}.

2) By \cite[Corollary 5.14]{MNS}, every rational parameter ray at an angle $t$ of odd period $k$ lands/accumulates on a sub-arc of a parabolic co-root arc of period $k$. By Theorem \ref{most rays wiggle}, only the rays at fixed angles land at a single point of a parabolic arc, so the others must accumulate on a sub-arc of positive length.

3) This directly follows from \cite[Lemma 7.2]{MNS}, Lemma \ref{when 2k accumulates}, and Theorem \ref{most rays wiggle}.

4) Arguing as in \cite[Theorem 1.1 (3)]{S1a}, one sees that for any limit point $c$ of $\mathcal{R}_{t}^d$, the critical value $c$ is pre-periodic under $f_c$ with fixed period and pre-period. This implies that all the critical points are strictly pre-periodic (with fixed pre-periods and periods) for the holomorphic polynomial $f_c^{\circ 2}$. Since the accumulation set of a parameter ray is connected, it now suffices to prove that there are only finitely many parameters with these algebraic data.

In fact, it is not hard to see that there are only finitely many pairs of complex numbers $(a, b)$ such that the polynomial $(z^d+a)^d+b$ has strictly pre-periodic critical points with fixed pre-periods and periods. The conditions on the critical points determine a pair of distinct algebraic curves in $\mathbb{C}^2$, and their intersection is contained in the connectedness locus of the family of polynomials of degree $d^2$ (recall that the Julia set of a polynomial is connected if and only if all the critical orbits are bounded). Since the connectedness locus is compact \cite{BH1}, it follows from B\'{e}zout's theorem that the two algebraic curves under consideration must intersect at a finite set of points. This shows that there are only finitely many Misiurewicz parameters with fixed period and pre-period in the space of degree $d$ unicritical antiholomorphic polynomials.
\end{proof}
\section{Undecorated Arcs on The Boundaries of The Multicorns}\label{undecorated}

Recall that every parabolic arc has, at both ends, an interval of positive length at which a bifurcation from a hyperbolic component of odd period $k$ to a hyperbolic component of period $2k$ occurs. The decorations attached to these period $2k$ components accumulate on sub-arcs of positive length of the parabolic arc \cite[Theorem~7.3]{HS}. In this section, we will prove that for the parabolic arcs of period $1$, the accumulation sets of these decorations do not overlap; they stay at a positive distance away from the Ecalle height $0$ parameters.

\begin{proof}[Proof of Theorem \ref{period 1 multicorns}]
Every multicorn $\mathcal{M}^*_d$ contains a parabolic arc $\mathcal{C}$ of period $1$ intersecting the positive real axis at a unique (non-cusp) parameter $c_d=d^{\frac{d}{1-d}} (d-1)$. Note that $f_{c_d}$ has a unique parabolic fixed point $d^{\frac{1}{1-d}}$ on the real line. Due to the rotational symmetries of the multicorns, it suffices to prove the result for this arc. In the dynamical plane of $f_{c_d} = \overline{z}^d + c_d$, the parabolic fixed point has a unique access through the unique repelling petal, and the critical value $c_d$ has incoming Ecalle height $0$ (in fact, the incoming and outgoing equators are both contained in the real line, and so is the critical value). The projection of the Julia set in the repelling cylinder is a pair of disjoint simple closed curves (the Julia set in this case is simply the boundary of the immediate basin of attraction of the parabolic fixed point), and together they bound a cylinder $C$ of finite modulus. This finite modulus cylinder $C$ is the projection of the basin of infinity in the repelling Ecalle cylinder. We will first show that the cylinder $C$ contains a round cylinder containing the equator.

We choose the repelling Fatou coordinate at the parabolic fixed point so that the equator is mapped to the real line. Since $f_{c_d}$ commutes with complex conjugation, our Fatou coordinates also have the same property. This implies that the upper and the lower components (disjoint simple closed curves) of the projection of the Julia set in the repelling Ecalle cylinder are symmetric with respect to the real line. It follows that both these curves stay at a bounded distance away from the real line; in other words, the projection of the basin of infinity in the repelling Ecalle cylinder contains a round cylinder $\mathbb{S}^1 \times \left[ -\epsilon, \epsilon \right]$ for some $\epsilon > 0$. Alternatively, it is easy to see that the dynamical ray $R_{c_d}(0)$ (and its image under the repelling Fatou coordinate) is contained in the real line, and hence coincides with the equator in the repelling petal. This shows that the equator is contained in the basin of infinity. Hence, the projection of the basin of infinity in the repelling Ecalle cylinder contains a horizontal round circle, and thus also contains a round cylinder $\mathbb{S}^1 \times \left[ -\epsilon, \epsilon \right]$ for some $\epsilon > 0$ (compare Figure \ref{round_undecorated}).

\begin{figure}[ht!]
\begin{minipage}{0.48\linewidth}
\begin{center}
\includegraphics[scale=0.15]{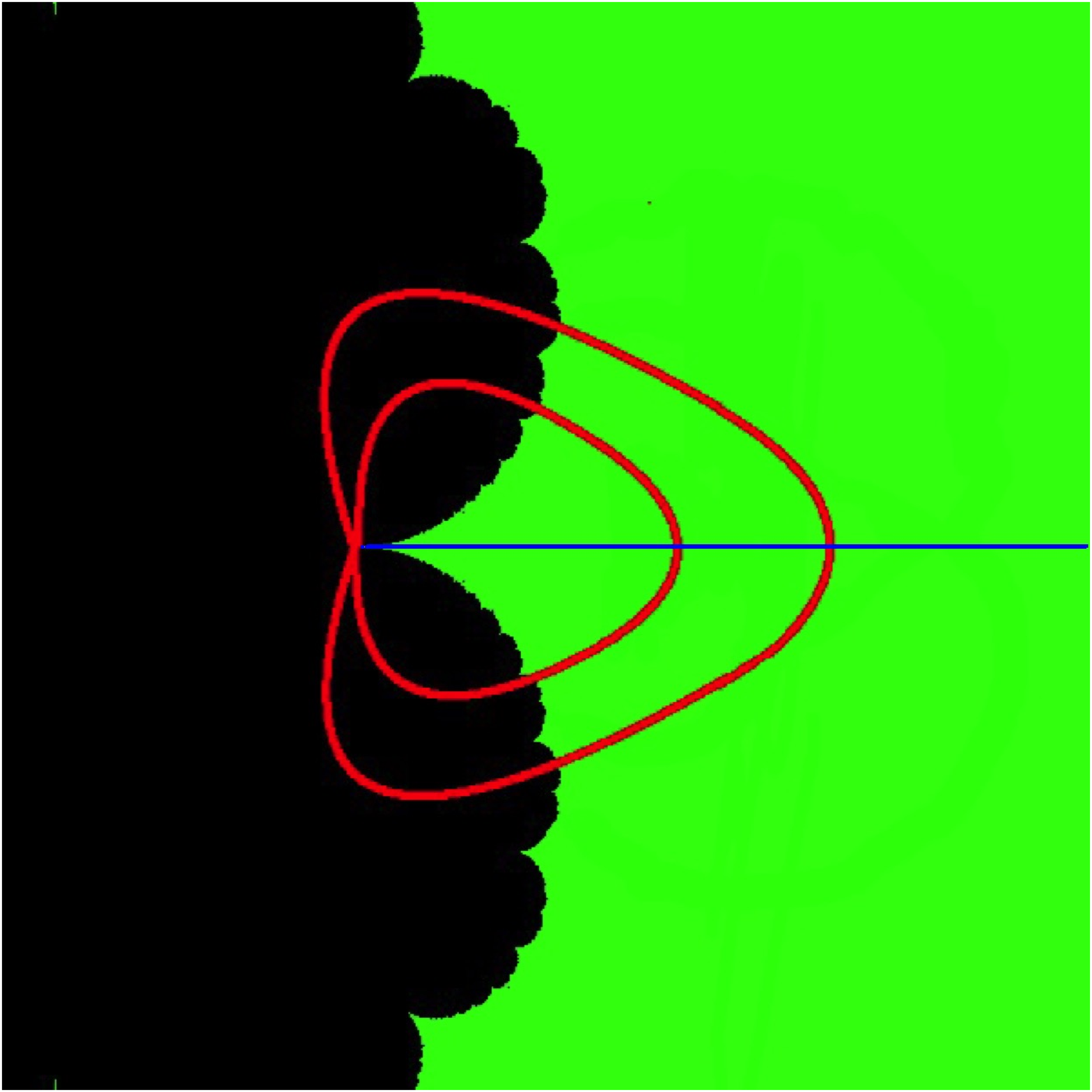}
\end{center}
\end{minipage}
\begin{minipage}{0.48\linewidth}
\begin{flushright}
\includegraphics[scale=0.46]{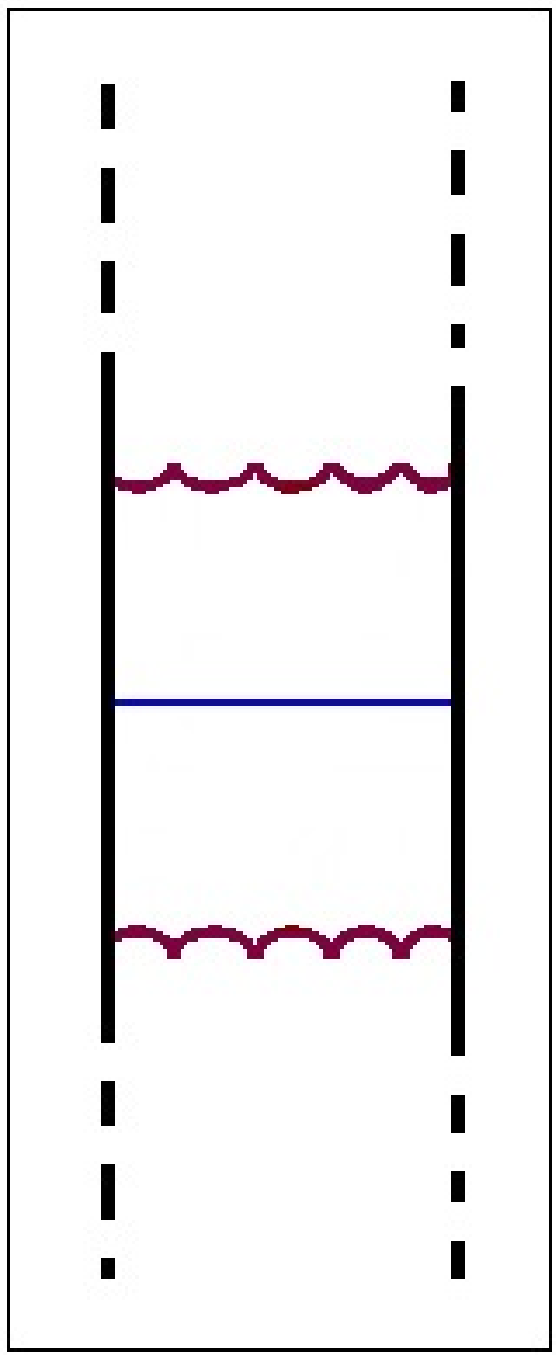}
\end{flushright}
\end{minipage}
\caption{Left: A fundamental domain for the parabolic dynamics of $\overline{z}^2+1/4$ (drawn in red), and the dynamical ray at angle $0$ (drawn in blue) . Right: The corresponding repelling Ecalle cylinder, where the central blue curve is the projection of the dynamical ray at angle $0$. The projection of the basin of infinity to this repelling Ecalle cylinder contains a round annulus.}
\label{round_undecorated}
\end{figure}

The final step is to transfer this round cylinder to an undecorated sub-arc in the parameter plane. It is known that the basin of infinity can not get too small when $c_d$ is perturbed a little bit (compare \cite[Theorem 5.1(a)]{D2}). Since the critical value and the Fatou coordinates depend continuously on the parameter, we can choose a small neighborhood $U$ of $c_d$ such that for all $c \in \overline{U} \setminus H$, the round cylinder $\mathbb{S}^1 \times \left[ -\epsilon/2,  \epsilon/2 \right]$ is contained in projection of the basin of infinity into the repelling Ecalle cylinder (note that in the outgoing cylinder of $c_d$, the round cylinder $\mathbb{S}^1 \times \left[ -\epsilon, \epsilon \right]$ is contained in the projection of the basin of infinity), and such that the critical value of $f_c$ has incoming Ecalle height in $\left[ -\epsilon / 4, \epsilon / 4 \right]$. 

We claim that $\overline{U} \setminus \overline{H}$ is contained in the exterior of the multicorns. Indeed, let $c \in \overline{U} \setminus \overline{H}$. In the dynamical plane of $f_{c}$, the critical orbit of $f_c$ ``transits'' from the incoming Ecalle cylinder to the outgoing cylinder, and the Ecalle height is preserved in the process. By our construction, this would provide with a point of the critical orbit with outgoing Ecalle height in $\left[-\epsilon/4, \epsilon/4 \right]$ in the repelling Ecalle cylinder. But since $c \in \overline{U} \setminus H$, any point in the repelling cylinder with (outgoing) Ecalle height in $\left[-\epsilon/2, \epsilon/2 \right]$ is contained in the projection of the basin of infinity. Therefore, the critical orbit is contained in the basin of infinity; i.e.\ $c \notin \mathcal{M}_d^*$.

This implies that $\overline{U} \cap \mathcal{M}_d^* \subset \overline{H}$. Hence, $\mathcal{C}$ contains a sub-arc containing the Ecalle height $0$ parameter, no point of which is a limit point of further decorations; i.e.\ $\mathcal{C}$ has an undecorated sub-arc. 
\end{proof}

\begin{remark}
a) One can prove the following slightly stronger statement for the tricorn: the parabolic arcs of period $1$ and $3$ contain undecorated sub-arcs. Indeed, in the dynamical plane of a parameter on a parabolic arc of odd period $k$, the projection of the basin of infinity into the repelling Ecalle cylinder is either an annulus of modulus $\frac{\pi}{2k\ln 2}$ or two disjoint annuli, each of modulus $\frac{\pi}{2k\ln 2}$ (depending on whether the parameter is on a co-root or root arc). For $k = 1$ and $3$, this modulus is greater than $1/2$; i.e.\ the corresponding annuli are not too thin. It is well-known (see \cite[Theorem I]{BDH}, for instance) that such a conformal annulus contains a round annulus centered at the origin. In other words, there is an interval $I$ of outgoing Ecalle heights such that in the repelling Ecalle cylinder, the round cylinder $\mathbb{S}^1 \times I$ is contained in the projection of the basin of infinity. One can now prove the existence of undecorated sub-arcs by using the same technique as in Theorem \ref{period 1 multicorns}. 

b) Numerical experiments show that away from the real line, the parabolic arcs of sufficiently high periods of the multicorns do not contain undecorated sub-arcs, rather the accumulation sets of the decorations attached to the two bifurcating hyperbolic components at the ends of such an arc overlap. This overlapping phenomenon would automatically make the corresponding parameter rays wiggle on such arcs. However, we do not know how to prove this statement.
\end{remark}
 
 We finish with a couple of interesting consequences of the previous theorem. 
 
\begin{corollary}\label{centers not dense}
The centers of the hyperbolic components of $\mathcal{M}_d^*$ do not accumulate on the entire boundary of $\mathcal{M}_d^*$. The Misiurewicz parameters are not dense on the boundary of $\mathcal{M}_d^*$.
\end{corollary}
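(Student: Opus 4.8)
The plan is to deduce both assertions directly from Theorem \ref{period 1 multicorns} by exhibiting a single boundary point near which neither centers nor Misiurewicz parameters can accumulate. First I would invoke Theorem \ref{period 1 multicorns} to fix a period $1$ parabolic arc $\mathcal{C}$ together with an undecorated sub-arc, and choose a parameter $c_0$ in the interior of this sub-arc. By the definition of ``undecorated'' recalled in the introduction, there is an open neighborhood $U$ of $c_0$ with $(\overline{U}\setminus\overline{H})\cap\mathcal{M}_d^*=\emptyset$, where $H$ is the period $1$ hyperbolic component; equivalently $\overline{U}\cap\mathcal{M}_d^*\subset\overline{H}$. Since $c_0\in\partial H\subset\partial\mathcal{M}_d^*$, it suffices to show that, after possibly shrinking $U$, this neighborhood contains no centers and no Misiurewicz parameters.

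The next step is the dynamical bookkeeping that isolates which special parameters can lie in $\overline{H}=H\sqcup\partial H$. The only center inside $\overline{H}$ is the center $c=0$ of $H$ itself: a center of any hyperbolic component $H'$ is an interior point of $H'$, so if it lay in $\overline{H}$ it would have to lie either in $H$ (forcing $H'=H$) or in $\partial H$, and the latter is impossible because $\partial H$ consists entirely of parabolic parameters by Lemma \ref{LemOddIndiffDyn} (period $1$ being odd), which are never superattracting. By the same token $\overline{H}$ contains no Misiurewicz parameter: such a parameter is neither hyperbolic, so it is not in $H$, nor parabolic, so it is not in $\partial H$.

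Finally I would transfer these observations to $U$. Shrinking $U$ so that $0\notin\overline{U}$, every center in $\overline{U}\cap\mathcal{M}_d^*\subset\overline{H}$ would have to equal $c=0$, hence there are none; thus $c_0$ is not an accumulation point of centers, and the centers fail to accumulate on all of $\partial\mathcal{M}_d^*$. Likewise $\overline{U}\cap\mathcal{M}_d^*\subset\overline{H}$ contains no Misiurewicz parameter, so no Misiurewicz parameter lies in $U$; in particular $c_0\in\partial\mathcal{M}_d^*$ is not approached by Misiurewicz parameters, and hence they are not dense on $\partial\mathcal{M}_d^*$.

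I expect the only point requiring care to be the clean dichotomy $\overline{H}=H\sqcup\partial H$ with $\partial H$ purely parabolic, which is exactly what lets me rule out centers and Misiurewicz parameters on the relevant piece of the boundary; everything else is a direct unwinding of the undecorated-arc property supplied by Theorem \ref{period 1 multicorns}, and the argument uses no new perturbation theory beyond that theorem.
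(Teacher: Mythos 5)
Your proof is correct and takes exactly the route the paper intends: the paper states this corollary as an immediate consequence of Theorem \ref{period 1 multicorns} with no written proof, and your argument---that near a point of an undecorated sub-arc every point of $\mathcal{M}_d^*$ lies in $\overline{H}$, which contains no center other than $c=0$ and no Misiurewicz parameter because $H$ is hyperbolic and $\partial H$ is purely parabolic---is precisely the natural unwinding of that implication.
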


In the last corollary, we show that there are no bifurcations near the Ecalle height $0$ parameters of the parabolic arcs. This was first proved in \cite[Theorem 7.1]{HS}, the present proof is somewhat simpler, and almost readily follows from the previous results. 

\begin{corollary}[No Bifurcation near Ecalle Height Zero]\label{No bifurcation near 0} On every parabolic arc of period $k$, the point with Ecalle height zero has a neighborhood (along the arc) that does not intersect the boundary of a hyperbolic component of period $2k$.
\end{corollary}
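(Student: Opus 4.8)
The plan is to reduce the statement about all odd periods $k$ to the period-$1$ case already handled by Theorem \ref{period 1 multicorns}, and to extract exactly the geometric input that makes the argument work: the existence of a \emph{round cylinder} in the projection of the basin of infinity to the repelling Ecalle cylinder at the Ecalle height $0$ parameter. First I would recall, as in the Remark following Theorem \ref{period 1 multicorns}, that in the dynamical plane of a parameter on any parabolic arc $\mathcal{C}$ of odd period $k$, the projection of the basin of infinity into the repelling Ecalle cylinder is a conformal annulus (or a pair of them, on a root arc) bounded above and below by the projections of the Julia set. At the Ecalle height $0$ parameter $c_0 \in \mathcal{C}$, the key symmetry is that the dynamical ray fixed by the first antiholomorphic return map projects to the equator (the real line in the chosen Fatou coordinate), so this annulus is symmetric about the equator and therefore contains a round subcylinder $\mathbb{S}^1 \times [-\epsilon, \epsilon]$ for some $\epsilon > 0$, just as in the proof of Theorem \ref{period 1 multicorns}.

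The second step is the perturbation/transfer argument, which I would run verbatim from the proof of Theorem \ref{period 1 multicorns}, but now for general odd $k$ and emphasizing the conclusion about \emph{hyperbolic components of period $2k$} rather than about the exterior of $\mathcal{M}_d^*$. Using continuity of the Fatou coordinates and of the critical value under perturbation (and the fact that the basin of infinity cannot shrink abruptly, \cite[Theorem 5.1(a)]{D2}), I would pick a neighborhood $U$ of $c_0$ so that for every $c \in \overline{U} \setminus H$ the round cylinder $\mathbb{S}^1 \times [-\epsilon/2, \epsilon/2]$ still lies in the projection of the basin of infinity, while the critical value has incoming Ecalle height in $[-\epsilon/4, \epsilon/4]$. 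The transit map preserves Ecalle height, so upon transiting through the gate the critical orbit lands at outgoing height in $[-\epsilon/4, \epsilon/4]$, hence inside the round cylinder, hence inside the basin of infinity. Therefore no $c \in \overline{U} \setminus \overline{H}$ lies in $\mathcal{M}_d^*$ at all; in particular none lies on the boundary of a period $2k$ hyperbolic component. Combined with the fact that $\partial H$ near $c_0$ consists of the arc $\mathcal{C}$ itself (Theorem \ref{parabolic arcs}), this gives a neighborhood of the Ecalle height $0$ parameter along $\mathcal{C}$ meeting no period $2k$ component boundary.

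The only genuine subtlety, and the step I expect to require the most care, is the \textbf{existence of the round cylinder for arbitrary odd $k$}. For $k=1$ the equator coincides with the dynamical ray $R_{c_0}(0)$, which lies on the real line by the explicit symmetry of $f_{c_d}$, so the round cylinder is immediate. For general $k$ one does not have such an explicit picture; instead I would invoke the modulus estimate from the Remark, namely that each boundary annulus has modulus $\tfrac{\pi}{2k\ln d}$ (or the appropriate analogue), and cite \cite[Theorem I]{BDH} to conclude that a conformal annulus of definite modulus, symmetric about the equator, contains a round subannulus centered on the equator. The symmetry about the equator is what guarantees the round subannulus can be taken to \emph{contain} the equator (height $0$), and this symmetry is exactly the antiholomorphic reflection fixing the equator that is special to this setting. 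One must check only that the relevant modulus is bounded below uniformly in the parameter along a small sub-arc, which again follows from continuity of the Fatou coordinates; no new idea beyond those already used in Theorem \ref{period 1 multicorns} is needed.
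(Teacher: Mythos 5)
Your reduction works only for period $1$, and the step you flag as ``the only genuine subtlety'' --- existence of a round cylinder containing the equator for arbitrary odd $k$ --- is not a subtlety but a genuine gap, and in fact it is precisely what fails for $k>1$. For a non-real parameter on an arc of period $k>1$, the only antiholomorphic symmetry available is the first return map $f_c^{\circ k}$ itself, which by Lemma \ref{normalization of fatou} acts on the repelling cylinder as the glide reflection $\zeta \mapsto \overline{\zeta}+1/2$; this is a fixed-point-free involution of $\mathbb{C}/\mathbb{Z}$, not the reflection $\zeta \mapsto \overline{\zeta}$. Invariance of the basin-of-infinity annulus under the glide reflection does \emph{not} force its boundary curves away from the equator: if the upper boundary (the projected Julia set with its decorations) dips to height $-\delta$ over some angle $\theta$, its image is a dip of the lower boundary to height $+\delta$ over the angle $\theta + 1/2$, which is perfectly compatible with the two curves being disjoint. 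The period-$1$ proof of Theorem \ref{period 1 multicorns} does not use this kind of symmetry at all; it uses that $c_d$ is real, so that $f_{c_d}$ commutes with $z \mapsto \overline{z}$ and the Fatou coordinate carries the genuine pointwise-fixing reflection $\zeta \mapsto \overline{\zeta}$ --- equivalently, that the ray $R_{c_d}(0)$ \emph{is} the equator. Your claim that for general odd $k$ the ray fixed by the antiholomorphic return map ``projects to the equator'' is exactly what Lemmas \ref{horizontal implies invariance of lamination} and \ref{invariant lamination only for period 1} rule out when $k>1$: that ray must sweep a nondegenerate interval $[-h,h]$ of Ecalle heights, and this is the very engine of Theorem \ref{most rays wiggle}. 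The fallback via \cite{BDH} does not rescue the argument: the modulus of the annulus is $\pi/(2k\ln d)$, which is not ``bounded below by continuity'' but decays in $k$ (for the tricorn it is already below $1/2$ for $k \geq 5$), and even when a round subannulus exists it need not contain height $0$. Note finally that your argument, if correct, would show that \emph{every} odd-period arc has an undecorated sub-arc around its Ecalle height $0$ point --- a statement the authors explicitly expect to be false for large periods (Remark (b) after Theorem \ref{period 1 multicorns}), since the decorations from the two ends of such arcs are observed to overlap.

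The paper's actual proof for $k>1$ takes an entirely different route, via the wiggling theorem rather than via undecorated arcs. For co-root arcs, Case 1 of the proof of Theorem \ref{most rays wiggle} shows that the accumulation set of the parameter ray $\mathcal{R}_t^d$ contains a sub-arc of $\mathcal{C}$ around the height-$0$ parameter; since the ray lies in the complement of $\mathcal{M}_d^*$, no period-$2k$ component can be attached along that sub-arc. For root arcs the height-$0$ parameter need not lie in the accumulation set of either ray; instead one uses the parabolic Hubbard tree separating the two accesses to the parabolic point to show that the $\alpha_1$-ray reaches positive heights and the $\alpha_2$-ray reaches negative heights, and then runs the Case 2 perturbation argument of Theorem \ref{most rays wiggle} to produce a parameter of positive critical Ecalle height in the accumulation set of $\mathcal{R}_{\alpha_1}^d$ and one of negative height in that of $\mathcal{R}_{\alpha_2}^d$; these two accumulation sets then sandwich the height-$0$ parameter along $\mathcal{C}$, bounding it away from the bifurcating period-$2k$ components. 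Your proposal, as written, proves only the period-$1$ case, where it coincides with the paper's Theorem \ref{period 1 multicorns}.
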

\begin{proof}
For the parabolic arcs of period one, the statement readily follows from Theorem \ref{period 1 multicorns}. By the proof of Theorem \ref{most rays wiggle}, the Ecalle height $0$ parameter on any co-root arc has an open neighborhood (along the arc) which lies in the accumulation set of a parameter ray; hence this neighborhood does not intersect the bifurcating period $2k$ components. 

To finish the proof, assume that $\mathcal{C}$ is a root arc. Let $\alpha_1$ and $\alpha_2$ be the angles of the parameter rays accumulating on $\mathcal{C}$. In the dynamical plane of any $c \in \mathcal{C}$, the corresponding dynamical rays land at the characteristic parabolic point through two different accesses in the repelling petal. These two accesses are separated by a parabolic Hubbard tree, which is invariant under the first anti-holomorphic return map. Clearly, the tree either projects to the equator in the repelling cylinder or its projection traverses an interval of Ecalle heights $\left[-a, a\right]$ for some $a>0$. We can, without loss of generality, assume that the dynamical $\alpha_1$ (respectively $\alpha_2$)-ray lies `above' (respectively `below') the hubbard tree (more precisely, this means that the image of the $\alpha_1$-ray under the repelling Fatou coordinate lies in the complementary component of the image of the Hubbard tree containing an upper half plane). We denote the interval of Ecalle heights traversed by $R_c(\alpha_1)$ (respectively, $R_c(\alpha_2)$) by $\left[l_1(c), u_1(c)\right]$ (respectively, $\left[l_2(c), u_2(c)\right]$). It now follows that $u_1(c)>0$, and $l_2(c)<0$ $\forall \hspace{1mm} c \in \mathcal{C}$. Arguing as in case 2 of Theorem \ref{most rays wiggle}, we can find a parameter $c_h \in \mathcal{C}$ (respectively $c_{h'}$) with critical Ecalle height $h>0$ (respectively $h'<0$) so that $h \in \left(l_1(c_h), u_1(c_h)\right)$ (respectively $h' \in \left(l_2(c_{h'}), u_2(c_{h'})\right)$. This implies that $c_h$ and $c_{h'}$ are in the accumulation sets of the parameter rays at angles $\alpha_1$ and $\alpha_2$ respectively. Hence, the convex hull (along $\mathcal{C}$) of the accumulation sets of these two parameter rays contains the Ecalle height $0$ parameter on $\mathcal{C}$. This shows that the accumulation sets of two parameter rays bound the Ecalle height $0$ parameter on every root arc away from the bifurcating period $2k$ components, and completes the proof of the corollary.
\end{proof}

\bibliographystyle{alpha}
\bibliography{w}

\end{document}